\documentclass{amsart}

\usepackage{amsmath, amssymb, amsthm, marginnote, ltxcmds, adjustbox, stmaryrd, graphicx, bbold, extarrows}
\usepackage{esvect}

\usepackage{mathtools, stackengine, changepage, ragged2e}

\usepackage{todonotes}

\usepackage{mathrsfs}

\definecolor{cite}{HTML}{11871E}
\definecolor{url}{HTML}{698996}
\definecolor{link}{HTML}{912F1B}

\usepackage[pdfencoding=unicode, colorlinks=true, linkcolor=link, citecolor=cite, urlcolor=url, linktocpage]{hyperref}

\usepackage[backend=biber, style=alphabetic, maxnames=10, maxalphanames=3, minalphanames=3]{biblatex}
\usepackage{tikz}
\usetikzlibrary{cd}
\usetikzlibrary{arrows, arrows.meta, positioning, calc}
\tikzcdset{arrow style=tikz, diagrams={>={Straight Barb[scale=0.8]}}}

\tikzstyle{arrow} = [-{Straight Barb[scale=0.8]}, line width=0.2mm]
\tikzset{
	math to/.tip={Glyph[glyph math command=rightarrow]},
	loop/.tip={Glyph[glyph math command=looparrowleft, swap]},
}

\usepackage{enumitem}
	{\end{enumerate}%
}

\usepackage[
letterpaper,
twoside=false,
textheight=22cm,
textwidth=14.4cm,
marginparsep=0.75cm,
marginparwidth=2.5cm,
heightrounded,
centering
]{geometry}

\usepackage{lineno}

\usepackage{contour}
\usepackage[normalem]{ulem}

\contourlength{0.5pt}

\newcommand{\myuline}[1]{%
	\uline{\phantom{#1}}%
	\llap{\contour{white}{#1}}%
}

\makeatletter
\newcommand*{\saved@myuline}{}
\let\saved@myuline\myuline

\newcommand*{\mathuline}{%
	\mathpalette{\math@myuline\saved@myuline}%
}
\newcommand*{\math@myuline}[3]{%
	\mbox{#1{$#2#3\m@th$}}%
}

\renewcommand*{\myuline}{%
	\relax  
	\ifmmode
	\expandafter\mathuline
	\else
	\expandafter\saved@myuline
	\fi
}
\makeatother

\usepackage{libertine}
\usepackage{stmaryrd}

\DeclareFontFamily{T1}{cbgreek}{}
\DeclareFontShape{T1}{cbgreek}{m}{n}{<-6>  grmn0500 <6-7> grmn0600 <7-8> grmn0700 <8-9> grmn0800 <9-10> grmn0900 <10-12> grmn1000 <12-17> grmn1200 <17-> grmn1728}{}
\DeclareSymbolFont{quadratics}{T1}{cbgreek}{m}{n}
\DeclareMathSymbol{\qoppa}{\mathord}{quadratics}{21}

\usepackage[capitalise]{cleveref}

\Crefname{prop}{Proposition}{Propositions}
\Crefname{lem}{Lemma}{Lemmas}
\Crefname{cor}{Corollary}{Corollaries}
\Crefname{thm}{Theorem}{Theorems}
\Crefname{alphThm}{Theorem}{Theorems}

\Crefname{defn}{Definition}{Definitions}
\Crefname{notation}{Notation}{Notations}
\Crefname{cons}{Construction}{Constructions}
\Crefname{rmk}{Remark}{Remarks}
\Crefname{obs}{Observation}{Observations}
\Crefname{trick}{Trick}{Tricks}
\Crefname{warning}{Warning}{Warnings}
\Crefname{conj}{Conjecture}{Conjectures}
\Crefname{assump}{Assumption}{Assumptions}
\Crefname{recollect}{Recollection}{Recollections}
\Crefname{terminology}{Terminology}{Terminologies}

\Crefname{question}{Question}{Questions}
\Crefname{example}{Example}{Examples}

\Crefname{figure}{Figure}{Figures}

\crefformat{equation}{(#2#1#3)}
\crefformat{section}{\S#2#1#3}
\crefmultiformat{section}{\S\S#2#1#3}{and~#2#1#3}{, #2#1#3}{, and~#2#1#3}

\newtheorem{thm}[subsubsection]{Theorem}
\newtheorem{prop}[subsubsection]{Proposition}
\newtheorem{lem}[subsubsection]{Lemma}
\newtheorem{cor}[subsubsection]{Corollary}

\newtheorem{alphThm}{Theorem}
\renewcommand{\thealphThm}{\Alph{alphThm}}
\newcommand{\neutralize}[1]{\expandafter\let\csname c@#1\endcsname\count@}
\makeatother

\newtheorem{introthm}{Theorem}

\newtheorem{introtheorem}[introthm]{Theorem}
\newtheorem{introquestion}[introthm]{Question}

\newtheorem{appendixproposition}{Proposition}[section]
\newtheorem{appendixlemma}{Lemma}[section]

\newtheorem*{thm*}{Theorem}
\newtheorem*{prop*}{Proposition}
\newtheorem*{lem*}{Lemma}
\newtheorem*{cor*}{Corollary}

\newtheorem{alphConj}{Conjecture}
\renewcommand{\thealphConj}{\Alph{alphConj}}


\newtheorem{alphCor}{Corrollary}
\renewcommand{\thealphCor}{\Alph{alphCor}}


\newtheorem{alphProp}{Proposition}
\renewcommand{\thealphProp}{\Alph{alphProp}}


\theoremstyle{definition}
\newtheorem*{defn*}{Definition}
\newtheorem{defn}[subsubsection]{Definition}

\newtheorem{recollect}[subsubsection]{Recollections}

\newtheorem{conj}[subsubsection]{Conjecture}

\theoremstyle{remark}
\newtheorem{rmk}[subsubsection]{Remark}
\newtheorem{obs}[subsubsection]{Observation}
\newtheorem{example}[subsubsection]{Example}

\newtheorem{constr}[subsubsection]{Construction}

\newcommand{\abeliangroups}{\mathrm{Ab}}
\newcommand{\all}{\mathrm{all}}
\newcommand{\Ar}{\mathrm{Ar}}
\newcommand{\asm}{\mathrm{asm}}
\newcommand{\bbR}{\mathbb{R}}

\newcommand{\catp}{\mathrm{Cat}^{\mathrm{p}}}
\newcommand{\codesc}{\mathrm{codesc}}
\newcommand{\cofib}{\mathrm{cofib}}
\newcommand{\connquadL}{\tau_{\geq 1}\mathrm{L}^{\mathrm{q}}}
\newcommand{\coconnquadL}{\tau_{\leq 0}\mathrm{L}^{\mathrm{q}}}
\newcommand{\connquadkarL}{\tau_{\geq 1}\mathbb{L}^{\mathrm{q}}}
\newcommand{\Corr}{\mathrm{Corr}}

\newcommand{\equi}{\mathrm{equiv}}
\newcommand{\equivfiniteness}{w}
\newcommand{\equivprojclass}[2]{\mathrm{Wa}^{#1}(#2)}

\newcommand{\family}{\mathcal{F}}
\newcommand{\fib}{\mathrm{fib}}
\newcommand{\fincov}{\mathrm{fincov}}
\newcommand{\finite}{\mathrm{fin}}
\newcommand{\finitemodule}{\mathrm{f}}
\newcommand{\finpairs}{\mathrm{Fin}^{(2)}}
\newcommand{\finpairstrans}{\mathrm{Fin}^{(2)}_{\mathrm{tf}}}
\newcommand{\fold}{\mathrm{fold}}
\newcommand{\func}{\mathrm{Fun}}
\newcommand{\generichlgy}{\mathcal{H}}
\newcommand{\Gpd}{\mathrm{Gpd}}
\newcommand{\hAut}{\mathrm{hAut}}
\newcommand{\id}{\mathrm{id}}

\newcommand{\induct}{\mathrm{ind}}
\newcommand{\karLthy}{\mathbb{L}}

\newcommand{\Kthy}{\mathrm{k}}
\newcommand{\Lthy}{\mathrm{L}}
\newcommand{\leftclass}{\mathrm{left}}
\newcommand{\map}{\mathrm{map}}
\newcommand{\module}[1]{\mathrm{Mod}_{#1}}
\newcommand{\ncKthy}{\mathrm{K}}
\newcommand{\nctateK}{\mathrm{K}^{tC_2}}
\newcommand{\nielsspace}{X_{\mathrm{Bor}}}
\newcommand{\normfam}{{{\mathcal{F}}_{\mathrm{n}}}}
\newcommand{\maximal}{\mathcal{M}}
\newcommand{\Met}{\mathrm{Met}}
\newcommand{\op}{^{\mathrm{op}}}

\newcommand{\orbit}{\mathrm{Orb}}
\newcommand{\orbifoldfundamentalgroup}{\mathcal{O}}
\newcommand{\Out}{\mathrm{Out}}
\newcommand{\pdbord}{\Omega^{\mathrm{PD}}}
\newcommand{\Poinc}{\mathrm{Pn}}
\newcommand{\PSh}{\mathcal{P}}

\newcommand{\quadratic}{{\qoppa}^{\mathrm{q}}}
\newcommand{\quadL}{{\Lthy}^\mathrm{q}}
\newcommand{\quadkarL}{{\karLthy}^{\mathrm{q}}}
\newcommand{\qoppamet}{{\qoppa}^{\mathrm{met}}}
\newcommand{\reducedK}{\widetilde{\mathrm{K}}}
\newcommand{\restrict}{\mathrm{res}}
\newcommand{\rightclass}{\mathrm{right}}

\newcommand{\spc}{\mathcal{S}}
\newcommand{\spcpairtrans}{\spc_{\mathrm{tf}}^{2}}
\newcommand{\spectra}{\mathrm{Sp}}
\newcommand{\structure}{\mathcal{S}}
\newcommand{\structurekar}{{\structure}_{\mathrm{kar}}}
\newcommand{\structurekarper}{{\structure}_{\mathrm{kar}}^{\mathrm{per}}}
\newcommand{\tateK}{\mathrm{k}^{tC_2}}

\newcommand{\tso}{\mathrm{tso}}
\newcommand{\torsionfree}{{\family_{\mathrm{tf}}}}
\newcommand{\vcyc}{\mathrm{vc}}

\newcommand{\whiteheadgroup}{\mathrm{Wh}}

\def\colim{\qopname\relax m{colim}}

\newcommand{\category}[1]{\mathcal{#1}}

\newcommand{\bbZ}{\mathbb{Z}}

\newcommand{\arrdisp}{0.33ex}
\newcommand{\arrdisplacementsp}{0.72ex}

\newcommand{\ardis}{\ar@<\arrdisp>}
\newcommand{\ardissp}{\ar@<\arrdisplacementsp>}



\title{On the Nielsen realisation problem for cyclic groups of prime order}
\author{ \large\textsc{Christian} KREMER}

\date{\today}

\begin{document}
	\maketitle
	
\begin{abstract}
	We solve the Nielsen realisation problem for high-dimensional aspherical manifolds in a new class of special cases. Mainly, we focus on actions of cyclic groups of prime order. A novelty is that it gives topological solutions to the problem with certain high-dimensional fixed point sets, whereas previous solutions of this type were restricted to discrete fixed point sets. This is enabled by recent joint work with Kirstein on isovariant Poincar\'e duality spaces, and Farrell--L\"uck--Steimle's obstruction theory to finding approximate fibrations within a homotopy class.
\end{abstract}

\section{Introduction}

Nielsen realisation problems in manifold theory are about refining symmetries of manifolds to stronger symmetries. Jakob Nielsen \cite{Nielsen32} originally asked if a finite subgroup of the mapping class group of a surface can be refined to a group action. For example, if $f$ is a selfmap of a surface $\Sigma_g$ with $f^p$ homotopic to the identity, is it homotopic to a homeomorphism of order $p$? There are various ways to generalise Nielsen's problem to other classes of manifolds $M$. This article focuses on a generalisation for aspherical manifolds: closed, connected manifolds with contractible universal cover.  Examples of aspherical manifolds are closed Riemannian manifolds of nonpositive sectional curvature, by the celebrated Cartan--Hadamard theorem. 
To formulate a version of the Nielsen realisation problem, let us write the outer automorphism group of a discrete group $\pi$ by $\Out(\pi)$. If $M$ is an aspherical manifold, then $\Out(\pi_1(M))$ is identified with the group of homotopy classes of homotopy automorphisms of $M$. Hence, subgroups of $\Out(\pi_1(M))$ encode symmetries of $M$ in a weak sense.

\begin{introquestion}
	\label{quest:nielsen}
	Let $M$ be a closed aspherical manifold, such that its fundamental group $\pi$ has trivial centre. When can a finite subgroup
	\begin{equation}
		G \subset \Out(\pi)
	\end{equation}
	be refined to a $G$-action by homeomorphisms on $M$?
\end{introquestion}

The triviality of the centre of $\pi$ guarantees that $G \subset \Out(\pi_1(M))$ refines to an $E_1$-action of $G$ on $M$, i.e. a map $BG \rightarrow B\hAut(M)$. See e.g. \cite[p.3]{borel} for an elaboration. 
In particular, this allows us to speak of the homotopy fixed points $M^{hG}$ of the action.
We give a purely group-theoretic description of this space below \cref{eq:homotopy_fixed_points}. Previous topological approaches to \cref{quest:nielsen}, for example \cite{dl5}, relied on these homotopy fixed points to be \textit{equivalent to a finite set}. This article gives the first purely topological construction of solutions to the Nielsen realisation problem in the case when $M^{hG}$ is \textit{low-dimensional compared to the dimension of $M$}.

\begin{introtheorem}
	\label{thm:main_theorem}
	Let $M^d$ be an oriented aspherical $d$-manifold with $\pi= \pi_1(M)$ word-hyperbolic and $d$ at least six. Consider a cyclic subgroup $C_p \subset \Out(\pi)$ of odd prime order $p$. If the space $M^{hC_p}$ is homotopy equivalent to a PL-manifold of dimension at most $(d-2)/2$,
	then it refines to a $C_p$-action on $M$.
\end{introtheorem}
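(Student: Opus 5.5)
We would build the action from a homotopy-theoretic model and then promote it to a genuine action in three stages. First, use the $E_1$-action $BC_p\to B\hAut(M)$ and the Borel construction to get the space $\nielsspace = M_{hC_p}$, whose fundamental group is the extension $\Gamma$ of $C_p$ by $\pi$ determined by $C_p\subset\Out(\pi)$. Since $\pi$ is torsionfree and hyperbolic, each conjugacy class of order-$p$ elements of $\Gamma$ has centraliser $C_p\times \pi^{g}$. Hence $M^{hC_p}$ decomposes as a disjoint union of the aspherical spaces $B\pi^g$, one for each conjugacy class of splittings. Each $\pi^g$ is quasiconvex (a centraliser of a finite-order automorphism) and so again hyperbolic. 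By hypothesis each component is homotopy equivalent to a closed PL manifold $F_i$ with $\dim F_i\le (d-2)/2$.

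Second, we produce an isovariant Poincar\'e duality space: a $C_p$-space $X$ with $X^{C_p}\simeq\coprod F_i$, free part modelled on $M$, and a stratified (isovariant) Poincar\'e duality structure. Concretely, we need for each $F_i$ a spherical fibration $S(\nu_i)\to F_i$ with fibre $S^{d-\dim F_i-1}$ carrying a free $C_p$-action, together with a map from its $C_p$-orbits to the complement $M_{hC_p}$ that defines a Poincar\'e pair once the complement is completed. Using the recent joint work with Kirstein, an isovariant Poincar\'e space of this kind exists as soon as the homotopy link data exist. The codimension bound $\dim F_i\le (d-2)/2$ is what makes these exist: it puts the link maps in the metastable range, so the homotopy link is determined by its homotopy type, and p odd ensures the fibre is a $C_p$-sphere with free action. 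To get the complement, apply a Browder–Wall type argument to $(M_{hC_p}, \coprod S(\nu_i)/C_p)$, using the Farrell–Jones conjecture for hyperbolic groups (Bartels–Lück) to kill surgery obstructions and Whitehead torsion.

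Third, we realise $X$ by a genuine topological $C_p$-manifold homotopy equivalent to $M$. The fixed set is already a manifold $F=\coprod F_i$. For the complement we do surgery rel boundary on the free part, a Poincar\'e pair with fundamental group $\Gamma$ acting freely and $d\ge 6$. The total surgery obstruction vanishes by Farrell–Jones for $\Gamma$ (virtually hyperbolic) and the fact that the relevant assembly maps are equivalences away from the singular part. Then we glue in a neighbourhood of $F$. This needs the homotopy link $S(\nu_i)\to F_i$ to be homotopic to an actual approximate fibration (a mapping cylinder neighbourhood). Here we apply Farrell–Lück–Steimle: their obstruction to deforming a map to an approximate fibration lies in Whitehead-type groups of $\pi^g\times$(fibre data), which vanish again by Farrell–Jones since $\pi^g$ is hyperbolic. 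Quinn's end theorem / stratified surgery then gives a topological $C_p$-manifold structure, i.e.\ a $C_p$-action on a manifold $N\simeq M$. Borel rigidity for hyperbolic groups (Bartels–Lück) makes $N$ homeomorphic to $M$, and the induced outer action is the given $C_p$ by construction.

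The main obstacle is the second stage: constructing the equivariant normal data and the isovariant Poincar\'e structure from purely homotopical input. The dimension hypothesis has to be used precisely there, via a metastable-range or equivariant Whitney-embedding argument, to desuspend the Spivak-type fibration of the fixed set inside $M_{hC_p}$ to an honest free $C_p$-sphere bundle. The Farrell–Lück–Steimle obstruction check is secondary but also needs care.
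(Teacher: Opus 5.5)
There is a genuine gap in your third stage, where you say the total surgery obstruction of the complement quotient $(Q,\partial Q)$ vanishes ``by Farrell--Jones for $\Gamma$''. Farrell--Jones only \emph{identifies} the structure spectrum. Fed through the relative assembly analysis (no type~II virtually cyclic subgroups, the family of normalisers of full subgroups, and the $K$-theoretic comparison of $L$-theory with Karoubi-invariant $L$-theory), it gives $\mathcal{S}(Q,\partial Q)\simeq \tau_{\le 0}\mathrm{L}^{\mathrm{q}}(\ast)\otimes(Q,\partial Q)$. Hence $\pi_d\mathcal{S}(Q,\partial Q)\cong H_d(Q,\partial Q;\mathbb{Z})\cong\mathbb{Z}$ (\cref{cor:computation_of_structure_spectra}). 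This group is not zero, and $\mathrm{tso}(Q,\partial Q)$ is a priori an arbitrary integer in it; this is exactly where Quinn's resolution obstruction lives. Your reasoning would equally show that every finite aspherical Poincar\'e complex of dimension $\ge 6$ with hyperbolic fundamental group is homotopy equivalent to a closed manifold, which Farrell--Jones is not known to give.

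The missing idea is to use that $M$ is a manifold at this stage, not only in the final rigidity step. The $C_p$-cover $(C^e,\partial C^e)$ is the complement of the fixed part inside $M$, and $\partial C^e\to X_{\mathrm{Bor}}^{C_p}$ is a $\pi$--$\pi$-equivalence. By excision and $\pi$--$\pi$-invariance of quadratic $L$-theory, $\mathcal{S}(C^e,\partial C^e)\to\mathcal{S}(M,U)$ is an equivalence, where $U\simeq X_{\mathrm{Bor}}^{C_p}$ is the neighbourhood piece. An explicit Poincar\'e bordism shows that $\mathrm{tso}(C^e,\partial C^e)$ and $\mathrm{tso}(M)=0$ have the same image there, so $\mathrm{tso}(C^e,\partial C^e)=0$ (\cref{lem:tso_substraction_lemma}). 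The transfer carries $\mathrm{tso}(Q,\partial Q)$ to $\mathrm{tso}(C^e,\partial C^e)$, so it remains to see that $\pi_d\mathcal{S}(Q,\partial Q)\to\pi_d\mathcal{S}(C^e,\partial C^e)$ is injective. Via the identification with $H_d$ this holds, because transfer followed by pushforward is multiplication by $p$ on $H_d(Q,\partial Q)\cong\mathbb{Z}$ (\cref{thm:transfer_injectivity}).

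Two further steps need repair. First, your justification of the Farrell--L\"uck--Steimle step does not work. For the resulting approximate fibration to be $C_p$-equivariant, you must deform the quotient map $\partial N\to L$, whose source has fundamental group $\pi^g\times C_p$ on each component. $\mathrm{Wh}(\pi^g\times C_p)$ contains $\mathrm{Wh}(C_p)\neq 0$ (for $p\ge 5$) as a direct summand, so Farrell--Jones does not make the relevant Whitehead group vanish, and your argument gives no reason for the obstruction to be zero. What works is stabilising by $S^1$: factor $\partial N\to L$ as a homotopy equivalence $w$ followed by an approximate fibration out of an ENR. Then $(\partial N\to L)\times S^1$ factors through $w\times S^1$, whose Whitehead torsion vanishes by the product formula (\cref{thm:FLS}). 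This gives an approximate fibration over $L\times S^1$, hence over $L\times\mathbb{R}$, which is exactly the input the teardrop construction needs.

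Second, do not aim to desuspend the homotopy link to an honest free $C_p$-sphere bundle. For positive-dimensional fixed sets this is not possible in general, and it is the whole reason approximate fibrations are used. Consequently $\partial Q$ is not known to be a manifold, and ``surgery rel boundary'' is not available. Instead, apply Ranicki's theorem to the finite Poincar\'e pair $(Q,\partial Q)$ to obtain a manifold pair $(N,\partial N)$; finiteness comes from a Rips complex model and L\"uck's equivariant finiteness obstruction.
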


The generalised Nielsen realisation problem (\cref{quest:nielsen}) has attracted some interest in the literature. In \cite{RaymondScott}, the importance of the triviality of the centre of $\pi$ was first discovered, see \cite{borel} for an elaboration.
The article \cite{BlockWeinberger} gives counterexamples for $G$ being a cyclic group of order $2$. Lastly, there are some uniqueness and non-uniqueness results for solutions to the Nielsen realisation problem \cite{CDK14, CDK15}. 

In fact, the action of $C_p$ on $M$ we produce satisfies a special property: It is \textit{Borel} in the sense that the map $M^{C_p} \rightarrow M^{hC_p}$ is an equivalence. This design criterion is attractive for the following reason: Let $\orbifoldfundamentalgroup \coloneqq \pi_1(M_{hC_p})$ denote the orbifold fundamental group of $(M, C_p \subset \Out(\pi))$ (see \cref{sec:setup}). That the action of $C_p$ on $M$ produced by \cref{thm:main_theorem} is Borel guarantees that $M$ is equivariantly homotopy equivalent to the quotient $\pi \backslash E_\finite \orbifoldfundamentalgroup$ with its residual $C_p$-action. Here, $E_\finite \orbifoldfundamentalgroup$ is the universal space for the family of finite subgroups of $\orbifoldfundamentalgroup$. Given a Borel $C_p$-action on $M$ as produced by \cref{thm:main_theorem}, the group $\orbifoldfundamentalgroup$ acts on the universal cover $\widetilde{M}$, exhibiting a \textit{cocompact manifold model for $E_\finite \orbifoldfundamentalgroup$}.

The space $M^{hC_p}$ can be identified explicitly in terms of the group $\orbifoldfundamentalgroup$. We show in \cref{appendixlemma:computing_fixed_points} that
\begin{equation}
	\label{eq:homotopy_fixed_points}
	M^{hC_p} \simeq \coprod_{F \in \maximal} BW_\orbifoldfundamentalgroup F.
\end{equation}
Here, $\maximal$ is a set of representatives of the nontrivial finite subgroups of $\orbifoldfundamentalgroup$ up to conjugation. The group $W_\orbifoldfundamentalgroup F = N_\orbifoldfundamentalgroup F/F$ is the Weyl group of $F$ in $\orbifoldfundamentalgroup$. The condition in \cref{thm:main_theorem} can hence be read as a condition on the Weyl groups of finite subgroups of the orbifold fundamental group $\orbifoldfundamentalgroup$. For example, if the normalisers of nontrivial finite subgroups of $\orbifoldfundamentalgroup$ are virtually cyclic, \cref{eq:homotopy_fixed_points} identifies $M^{hC_p}$ as a disjoint union of circles and points. We also mention that the indexing set $\maximal$ is a finite set, since a word-hyperbolic group has only finitely many conjugacy classes of finite subgroups \cite[Thm. III.$\Gamma$.3.2.]{BridsonHaefliger}.
The formula \cref{eq:homotopy_fixed_points} can also be used to see that it is in general not guaranteed for $M^{hC_p}$ to be homotopy equivalent to a closed manifold, see \cite{FL04}.

\subsection*{Outline of the proof of \cref{thm:main_theorem}}
	
The proof of \cref{thm:main_theorem} relies on \textit{equivariant and isovariant Poincar\'e duality}, two tools designed to describe facets of the homotopy theory of manifolds with a group action. Write $\nielsspace \coloneqq \pi \backslash E_\finite \orbifoldfundamentalgroup$ (see \cref{sec:setup} for an explanation of the notation). 

\begin{enumerate}[label=(\alph*)]
	\item Equivariant and isovariant Poincar\'e duality provide the structure of a \textit{finite semifree isovariant $C_p$-Poincar\'e space} on $\nielsspace$. That is, a decomposition of $\nielsspace$ as a pushout of finite $C_p$-spaces
	\begin{equation}
		\label{eq:diagram_introduction_isovariant}
		\begin{tikzcd}
			\partial C \ar[r] \ar[d] & C \ar[d]\\
			\nielsspace^{C_p} \ar[r] & \nielsspace
		\end{tikzcd}
	\end{equation}
	satisfying some geometric properties. Among those properties is that the action on $C_p$ and $\partial C_p$ is free, and that $(Q,\partial Q) = C_p\backslash(C,\partial C)$ is a Poincar\'e duality pair. Here, the codimension assumption in \cref{thm:main_theorem} is used.  Intuitively \cref{eq:diagram_introduction_isovariant} resembles the decomposition of a smooth $C_p$-manifold $M$ into its fixed point set $M^{C_p}$, the free part $M \setminus M^{C_p}$ and that they glue to $M$ along the unit sphere bundle of the normal bundle of the fixed point set \cite[Constr. 1.2]{semifree}. 
	\item We use non-equivariant surgery theory for topological manifolds, in particular Ranicki's total surgery obstruction to show that the pair $(Q,\partial Q)$ is equivalent to a compact manifold pair $(N,\partial N)$. The essential method here is a computation involving the $\ncKthy$-and $\Lthy$-theoretic Farrell--Jones conjecture. This is one step where the hyperbolicity assumption on $\pi$ is used.
	\item Picking an equivalence $\nielsspace^{C_p} \simeq L$, where $L$ is a PL-manifold, we want to glue it to the $C_p$-cover $\overline{N}$ of $N$ to obtain a manifold with $C_p$-action. For this, we use the technique of manifold approximate fibrations:  the teardrop construction allows us to build a topology on the $C_p$-set $(\overline{N}\setminus \partial \overline{N}) \cup L$ giving a $C_p$-manifold $M'$. As a last step, one shows that $M$ is homeomorphic to $M'$, compatible with the outer actions on the fundamental group. 
\end{enumerate}

Before moving on to how steps (a), (b) and (c) are carried out in this article specifically, we mention that also outside applications to the Nielsen realisation problem it presents a useful pipeline to construct $C_p$-actions on topological manifolds. First, one uses equivariant and isovariant Poincar\'e duality to separate the problem into the fixed point set part and the free part. Then one uses non-equivariant surgery to construct manifold structures on the free and fixed part separately. Finally, the theory of approximate fibrations is used to glue fixed part and free part back together.

Step (a) relies on and motivated the equivariant Poincar\'e duality developed in \cite{pd1,pd2} and on the theory of their isovariant structures built in \cite{semifree}. The work \cite{lueckBrown_Nielsen} is a precursor to these techniques, and heavily inspired them. 

Step (b) is mostly computational, partially generalising and streamlining computations in \cite{dl5} involving Farrell--Jones assembly maps. In contrast to \cite{dl5} our presentation consequently uses the modern setup of hermitian $K$-theory developed in \cite{nineauthors,nineauthorsii}, which might make the results therein accessible to a broader audience. We note some additional results made in the proof of this step below.

Step (c) is vastly simpler in the setting of discrete fixed points, than in our generality. Roughly, in the setting of discrete fixed points, the boundary $\partial \overline{N}$ will consist of spheres with a certain group action, and one can extend these group actions over a disc to obtain a group action on a closed manifold, which will in fact give the desired solution to the Nielsen realisation problem. For high-dimensional fixed points, it is not true in general that the map $\partial \overline{N} \rightarrow L$ is homotopic to an equivariant sphere bundle projection, so we cannot glue $\overline{N}$ to $L$ by gluing in a disc bundle. So we use the technique of manifold approximate fibrations and the teardrop construction from \cite{HughesTaylorWeinberger} to deal with this gluing construction. Since this technique is not broadly used, \cref{sec:Gluing} might be of some independent interest for readers interested in constructing group actions on topological manifolds.
We use the $\ncKthy$-theoretic obstruction theory to deform maps into approximate fibrations from \cite{FLS}. As this obstruction theory is only developed over PL-manifolds, we need the PL-assumption in \cref{thm:main_theorem}.

\subsection{Additional results}

In showing the existence of a manifold structure on the complement quotient $(Q,\partial Q)$ (i.e. the pair $C_p \backslash (C,\partial C)$ in \cref{eq:diagram_introduction_isovariant}) we prove some results on algebraic $K$-theory and Block homeomorphism groups that might be of independent interest. We start with the relevant $K$-theoretic results.
Let $G$ be a finite group, and $\pi$ a torsionfree group. Fix an extension
\begin{equation}
	1 \rightarrow \pi \rightarrow \Gamma \rightarrow G \rightarrow 1.
\end{equation}
A subgroup $F \subset \Gamma$ is called \textit{full} if the composite $F \subset \Gamma \rightarrow G$ is an isomorphism. Write $\maximal$ for a set of representatives of conjugacy classes of full subgroups. We assume that each nontrivial finite subgroup of $\Gamma$ is contained in a unique full subgroup. See \cref{appendixlemma:characterisations_of_property_M} for alternative characterisations of this property. The \textit{Whitehead group} $\whiteheadgroup(\Gamma)$ of $\Gamma$ is relevant to the theory of $s$-cobordisms, and the \textit{reduced projective class group} $\ncKthy(\bbZ \Gamma)$ of $\bbZ \Gamma$ to Wall's finiteness obstruction or Siebenmann's end theorem.
\begin{introthm}
	\label{thm:whitehead_groups}
	If $\Gamma$ is a Farrell--Jones group as above, then the inclusions $N_\Gamma F \subset \Gamma$ of normalisers of full subgroups of $\Gamma$  induce isomorphisms
	\begin{equation}
		\bigoplus_{F \in \maximal} \whiteheadgroup(N_\Gamma F) \xrightarrow{\simeq} \whiteheadgroup(\Gamma) \hspace{3mm} \text{and} \hspace{3mm} \bigoplus_{F \in \maximal} \widetilde{\ncKthy}_0(\bbZ N_\Gamma F) \xrightarrow{\simeq} \widetilde{\ncKthy}_0(\bbZ \Gamma). 
	\end{equation}
\end{introthm}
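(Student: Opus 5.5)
The plan is to use the Farrell--Jones conjecture for $\Gamma$ to express both groups as equivariant homology of a classifying space for virtually cyclic subgroups, and then reduce that homology to contributions from the normalisers $N_\Gamma F$. Since $\Gamma$ is Farrell--Jones, the assembly map
\[
H^\Gamma_*(E_\vcyc \Gamma; \mathbf{K}_{\bbZ}) \longrightarrow \ncKthy_*(\bbZ\Gamma)
\]
is an isomorphism, and likewise for each $N_\Gamma F$, which is again Farrell--Jones as a subgroup of $\Gamma$. Both $\whiteheadgroup$ and $\widetilde{\ncKthy}_0$ are the homotopy groups, in degrees $1$ and $0$, of the cofibre of the map induced by $E\Gamma \to E_\vcyc\Gamma$ (trivial family to virtually cyclic). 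The Bartels (Lück--Reich) splitting $E_\finite \to E_\vcyc$ together with the vanishing of Nil-terms in low degrees in the relevant range is not strictly needed. Instead I would first pass from $\vcyc$ to $\finite$. Note that $\Gamma$ is the extension of a torsionfree group $\pi$ by $G$, and each infinite virtually cyclic subgroup $V$ has finite torsion part contained in a unique full $F$. The relative terms $H^\Gamma_*(E_\vcyc\Gamma, E_\finite\Gamma)$ are built from Nil-type terms of $V$. These should decompose over $\maximal$ in the same way, by the same argument as below applied to the family of virtually cyclic subgroups with nontrivial torsion. For $V$ torsionfree, i.e. $V\cong\bbZ$, the relative term vanishes because $\bbZ$ is regular.

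The key geometric input is the hypothesis that every nontrivial finite subgroup lies in a unique full subgroup. From it I would build, as in \cref{appendixlemma:characterisations_of_property_M}, a $\Gamma$-pushout
\[
\begin{tikzcd}
\coprod_{F\in\maximal} \Gamma\times_{N_\Gamma F} E N_\Gamma F \ar[r]\ar[d] & E\Gamma \ar[d]\\
\coprod_{F\in\maximal} \Gamma\times_{N_\Gamma F} E_\finite N_\Gamma F \ar[r] & E_\finite\Gamma
\end{tikzcd}
\]
in the style of Lück--Weiermann. Here I use that $N_\Gamma F$ is the normaliser of every nontrivial subgroup $H\subseteq F$ inside the set of finite subgroups containing $H$, which is why the uniqueness of the full subgroup is what makes the fixed sets disjoint. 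Applying $H^\Gamma_*(-;\mathbf{K}_\bbZ)$ and using induction, the cofibres of the two vertical maps agree. The relative homology $H^\Gamma_*(E_\finite\Gamma, E\Gamma)$ is therefore $\bigoplus_F H^{N_\Gamma F}_*(E_\finite N_\Gamma F, EN_\Gamma F)$. Combined with the previous step this identifies the cofibre of the $\Gamma$-assembly from $E\Gamma$ with the sum of the corresponding cofibres for the $N_\Gamma F$, compatibly with the induction maps.

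The final step is to read off degrees $0$ and $1$. For torsionfree-type contributions, $H^\Gamma_*(E\Gamma;\mathbf{K}_\bbZ) = H_*(B\Gamma;\ncKthy(\bbZ))$ is connective with $\pi_0=\bbZ$ and $\pi_1$ accounting for $\pm\Gamma^{ab}$. The cofibre's $\pi_0$ and $\pi_1$ are then $\widetilde{\ncKthy}_0$ and $\whiteheadgroup$ only after care. Since $\Gamma$ has torsion, $\whiteheadgroup$ is defined as a quotient by $\pm\Gamma^{ab}$, not by the full $H_1$, so I would instead use the standard definition via the assembly from $E_\finite$ with $\ncKthy(\bbZ[\text{fin}])$ coefficients, truncated. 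I expect the main obstacle to be exactly this bookkeeping in low degrees: showing the decomposition respects the specific subgroups $\{\pm g\}$ and the negative $K$-groups. I would first try to avoid it by working with the assembly map from $E\Gamma$ with $\tau_{\ge0}$ or connective $K$-theory, whose cofibre is the Whitehead spectrum with $\pi_1=\whiteheadgroup$ and $\pi_0=\widetilde{\ncKthy}_0$ by Loday. Naturality then makes the displayed maps the induced ones.
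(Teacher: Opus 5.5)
Your outline is correct, but it takes a different route from the paper.

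\textbf{Your route versus the paper's.} You move along the chain of families $\{1\}\subset\finite\subset\vcyc$. The L\"uck--Weiermann square of \cref{appendixlemma:characterisations_of_property_M}(4) splits the relative term of $E\Gamma\to E_\finite\Gamma$ over $\maximal$. You then need a second decomposition for the relative (Nil) terms of $E_\finite\Gamma\to E_\vcyc\Gamma$, which you only assert, but it does go through:
\begin{itemize}
\item infinite cyclic subgroups contribute nothing, since $\bbZ$ is regular;
\item for the families $\finite\cup\{\text{torsionfree virtually cyclic}\}\subset\vcyc_I$ there is a pushout over $\maximal$, by the same fixed-point check as in \cref{lem:pushout_for_normaliser_family}. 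The key input is that an infinite type I virtually cyclic subgroup with nontrivial torsion normalises exactly one full subgroup (\cref{appendixlemma:normalisers_and_virtually_cyclic_subgroups} and \cref{appendixlem:group_theory}).
\end{itemize}
Pasting the squares and applying the Farrell--Jones conjecture to $\Gamma$ and to each $N_\Gamma F$ then gives the result.

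The paper never passes through $\finite$. It uses the family $\normfam$ of subgroups that are torsionfree or normalise a full subgroup. A single pushout, \cref{lem:pushout_for_normaliser_family}, then has lower left corner $\coprod_{F}\Gamma/N_\Gamma F$. The maps from $E\Gamma$ to $E_\torsionfree\Gamma$, also after taking products with $\coprod_F\Gamma/N_\Gamma F$, are $K$-homology equivalences by the Farrell--Jones conjecture for torsionfree groups with regular coefficients. The map $E_\normfam\Gamma\to\ast$ is one by \cref{lem:Farrell_Jones_for_larger_families}. This gives the pushout \cref{eq:diag_whitehead_group_computations} in one step, with no Nil-term bookkeeping, and the same square is reused in \cref{thm:tateK_computation}. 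Your route is longer, but it gives the finer information that the finite-subgroup part and the Nil part each split over $\maximal$ separately.

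\textbf{Two corrections.} First, your worry in the last paragraph is unfounded, and the detour through $E_\finite\Gamma$ would compute the wrong group.
\begin{itemize}
\item For any $\Gamma$, $\pi_1(B\Gamma_+\wedge\ncKthy(\bbZ))\cong\Gamma^{ab}\oplus\{\pm1\}$, and its image in $\ncKthy_1(\bbZ\Gamma)$ is exactly the image of $\{\pm g\}$.
\item The map $\pi_0=\bbZ\to\ncKthy_0(\bbZ\Gamma)$ is injective.
\item Hence the cofibre of the assembly from $E\Gamma$ has $\pi_1=\whiteheadgroup(\Gamma)$ and $\pi_0=\widetilde{\ncKthy}_0(\bbZ\Gamma)$, regardless of torsion. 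This is exactly how the paper reads off the statement.
\item The cofibre of the assembly from $E_\finite\Gamma$ would additionally quotient out the images of $\ncKthy_1(\bbZ F)$ for finite $F$. For example $\whiteheadgroup(C_5)\neq 0$, so this would lose information.
\end{itemize}

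Second, the claim that every infinite virtually cyclic subgroup with torsion lies in a unique $N_\Gamma F$ fails for subgroups isomorphic to $D_\infty$. These can occur when $|G|$ is even; for instance $\Gamma=D_\infty$ itself satisfies the hypotheses. You need either the absence of type II subgroups, or the known reduction of the $K$-theoretic Farrell--Jones conjecture to type I virtually cyclic subgroups. The paper's use of $\vcyc\subset\normfam$ in this proof implicitly relies on the same point.
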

Examples of Farrell--Jones groups are word-hyperbolic groups or fundamental groups of nonpositively curved manifolds, and finite extensions of such. Having this in mind, we give an application of our results to the study of block homeomorphism groups. Let $M$ be a nonpositively curved closed manifold with an isometric action by the finite group $G$ of odd order. Assume that the action is semifree and smooth, so only $G$ and the trivial group occur as isotropy groups. We let $(\overline{N},\partial \overline{N})$ denote the complement of an equivariant tubular neighborhood of $M^G$ in $M$, and set $(N,\partial N) = G \backslash (\overline{N},\partial \overline{N})$. For these specific manifolds we get the following result for its group of block homeomorphisms.
\begin{introthm}
	\label{thm:block_borel}
	For $M$ as above, if $\dim M \geq 6 $ and $M^{G} \subset M$ has codimension at least $3$,
	the map
	\begin{equation}
		\label{eq:block_homeo_and_hAut}
		\widetilde{\mathrm{Homeo}}(N,\partial N) \rightarrow \hAut(N,\partial N)
	\end{equation}
	is an inclusion of path components onto the group of simple homotopy automorphisms.
\end{introthm}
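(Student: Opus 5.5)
We compare block homeomorphisms with homotopy automorphisms of $(N,\partial N)$ through the surgery fibration sequence of Quinn and Ranicki. For a compact manifold pair of dimension $d\geq 6$ with good fundamental group there is a fibre sequence
\begin{equation*}
	\widetilde{\mathrm{Homeo}}(N,\partial N) \rightarrow \hAut^s(N,\partial N) \rightarrow \widetilde{\structure}^s(N,\partial N),
\end{equation*}
where $\hAut^s$ denotes simple homotopy automorphisms and $\widetilde{\structure}^s$ the block structure space relative to the boundary. The claim is that the first map is an inclusion of path components with image exactly the simple components. This follows once we know that $\widetilde{\structure}^s(N,\partial N)$ is contractible, or at least that $\pi_0$ is a point and $\pi_1$ vanishes on the relevant component. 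Since block homeomorphisms are simple by topological invariance of Whitehead torsion, the image lands in $\hAut^s$. Surjectivity onto simple components uses $\pi_0\widetilde{\structure}^s=\ast$. Injectivity on $\pi_0$ and isomorphisms on $\pi_k$, $k\geq 1$, use $\pi_{k}\widetilde{\structure}^s=0$ for $k\geq 1$.

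\textbf{Identifying the structure set.} By Ranicki's algebraic surgery sequence, $\widetilde{\structure}^s_{\partial}(N)$ is the fibre of the assembly map
\begin{equation*}
	N_+\wedge \quadL(\bbZ)\langle1\rangle \rightarrow \quadL^s(\bbZ\pi_1 N),
\end{equation*}
shifted by $d$. The group is $\pi_1 N=\Gamma$, the orbifold group of the action, since the codimension of $M^G$ is at least $3$ and so removing it does not change $\pi_1$. Also $N\simeq \pi\backslash E\Gamma$ restricted to the free part. We first pass from the decoration $s$ to the decoration $-\infty$ using \cref{thm:whitehead_groups} and the Rothenberg sequences. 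We then apply the $\Lthy$-theoretic Farrell--Jones conjecture for $\Gamma$, which holds for finite extensions of CAT(0) groups. This rewrites the target as $\Lthy$-homology of $E_\finite\Gamma$. Because every nontrivial finite subgroup lies in a unique full subgroup, $E_\finite\Gamma$ is a pushout of $E\Gamma$ along $\coprod_F \Gamma\times_{N_\Gamma F}E W F$. The cofibre of $E\Gamma\to E_\finite\Gamma$ is therefore the sum over $F\in\maximal$ of the relative terms for $EN_\Gamma F\to E_\finite N_\Gamma F$. Geometrically these correspond to the boundary $\partial N$, which is the sphere bundle quotient over the fixed set, and its link structure.

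\textbf{Main obstacle.} We must show that this relative term is exactly absorbed by the boundary, so that the rel-boundary structure space is contractible. Concretely, the boundary $\partial N$ has fundamental groups $N_\Gamma F$, possibly per component with $F\cong G$. Computing $\widetilde{\structure}(\partial N)$ and using rigidity of the fixed set (itself nonpositively curved, hence Borel) should show that the map of structure spaces
\begin{equation*}
	\widetilde{\structure}(N,\partial N)\to\widetilde{\structure}(N)\to\widetilde{\structure}(\partial N)
\end{equation*}
exhibits the relative piece as the cofibre above. Here we use that $G$ has odd order, so the Tate terms in $\Lthy$-theory of $\bbZ G$ and the UNil terms vanish. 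Decorations are controlled by \cref{thm:whitehead_groups}, which matches $\whiteheadgroup(\Gamma)$ and $\widetilde{\ncKthy}_0$ with the contributions of the normalisers $N_\Gamma F$, i.e. of the boundary components. I expect the bookkeeping of decorations together with the identification of the $\Lthy$-theoretic cofibre with the boundary contribution to be the hardest step. I would attempt it by a direct comparison of the two Farrell--Jones assembly squares for $N$ and $\partial N$.
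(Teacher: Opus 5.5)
The gap is in the structure space you use in the fibre sequence. Both $\widetilde{\mathrm{Homeo}}(N,\partial N)$ and $\hAut^s(N,\partial N)$ only \emph{preserve} the boundary. The homotopy fibre of the map between them is therefore the block structure space of the \emph{pair} $(N,\partial N)$, in which the boundary may move, i.e.\ $\Omega^{\infty+d+1}\structure(N,\partial N)$. Its $L$-theoretic input is the relative $L$-theory of $\bigoplus_{F\in\maximal}\quadL(BN_\Gamma F)\to\quadL(B\Gamma)$.

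What you actually use is the cofibre of $N_+\wedge\connquadL(\ast)\to\Lthy^s(\bbZ\Gamma)$. That is the rel-boundary structure space $\structure_\partial(N)$, which belongs to $\widetilde{\mathrm{Homeo}}_\partial(N)\to\hAut^s_\partial(N)$. It is \emph{not} contractible, so your ``main obstacle'' cannot be overcome as formulated. To see this, combine the cofibre sequence $\structure(\partial N)\to\structure(N)\to\structure(N,\partial N)$ with the true statement that $\pi_j\structure(N,\partial N)=0$ for $j\geq d+1$. This gives $\structure_\partial(N)\simeq\Omega\structure(\partial N)$, and $\partial N$ is a bundle of lens spaces over the fixed components. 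For $G=C_p$ with $M^G$ finite, $\partial N$ is a disjoint union of lens spaces $L^{d-1}$. The reduced multisignature part of $L_{2*}(\bbZ C_p)$ is free of rank $(p-1)/2$ and is not hit by assembly, so $\pi_k\structure_\partial(N)\neq 0$ for odd $k$.

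The relative term from $E\Gamma\to E_\finite\Gamma$ is absorbed only once the boundary's $L$-theory $\bigoplus_F\quadL(BN_\Gamma F)$ sits in the target, i.e.\ for the pair. Your decoration step has the same problem. \cref{thm:whitehead_groups} says $\whiteheadgroup(\partial N)\to\whiteheadgroup(N)$ is an isomorphism, which is a statement about a \emph{relative} Whitehead group. It does not let you change decorations in $\Lthy^s(\bbZ\Gamma)$ itself: for $M^G$ finite, $\whiteheadgroup(\Gamma)\cong\bigoplus_{\maximal}\whiteheadgroup(C_p)\neq 0$ for $p\geq 5$. Odd order of $G$ does not kill the absolute Rothenberg terms.

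Once you work with the pair throughout, your outline becomes the paper's proof. The paper shows that $\coconnquadL(\ast)\otimes(N,\partial N)\to\structure(N,\partial N)$ is an equivalence (\cref{cor:structure_spectrum_assembly_map_is_isomorphism}) by checking the two conditions of \cref{lem:when_structure_spectra_are_truncated_L_theory}.
\begin{itemize}
\item The periodic Karoubi structure spectrum of the pair vanishes. This is your comparison of assembly squares, made exact via the pushout of $\widehat{\partial C}\to\widehat{C}$ along $\widehat{\nielsspace^G}\to E_\finite\Gamma$, Farrell--Jones, and the absence of type II subgroups. Note that $N$ is not aspherical, so one must use $\widehat{C}$, not $E\Gamma$.
\item The map $\tateK(N,\partial N)\to\nctateK(N,\partial N)$ is an equivalence, which handles the passage from $h$ to $-\infty$ for the pair. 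A relative Rothenberg argument would also work, since the proof of \cref{thm:whitehead_groups} gives a pushout of the full nonconnective $K$-theory assembly square.
\end{itemize}
Then $H_j(N,\partial N)=0$ for $j>d$ and coconnectivity of $\coconnquadL(\ast)$ give $\Omega^{\infty+d+1}\structure(N,\partial N)\simeq\ast$. The isomorphism $\whiteheadgroup(\partial N)\cong\whiteheadgroup(N)$ identifies the simple and $h$ structure spaces of the pair. Finally, the $s$-cobordism theorem for triads gives $\pi_k\bigl(\hAut^s(N,\partial N)/\widetilde{\mathrm{Homeo}}(N,\partial N)\bigr)\cong\pi_k\structure^s_{\mathrm{geo}}(N,\partial N)=0$. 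No rigidity of the fixed set is needed.
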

The left hand side of \cref{eq:block_homeo_and_hAut} denotes the group of block homeomorphisms of $N$ respecting (but not fixing) the boundary, and the right hand side homotopy automorphisms of the pair $(N\partial N)$. A homotopy automorphism $(f,\partial f) \colon (N,\partial N) \rightarrow (N,\partial N)$ is simple if the maps $f$ and $\partial f$ are simple homotopy equivalences.  \cref{thm:block_borel} resembles the conclusion of the Block Borel conjecture for block homeomorphisms of aspherical manifolds. We view it as interesting, since most results for block homeomorphism groups of manifolds are known for manifolds with torsionfree fundamental group. See \cite{HLLRW21} for more information about the Block Borel conjecture, including its relevance to the vanishing results of characteristic classes.

\subsection{Recent literature on coherent actions}

We mention some recent work on the theory of \textit{coherent actions} on manifolds. If $M$ is a closed topological (resp. smooth) manifold and $G$ a compact Lie group, a coherent action on $M$ by homeomorphisms (resp. diffeomorphisms) is a map $\alpha \colon BG \rightarrow B\mathrm{Homeo}(M)$ (resp. $B\mathrm{Diff}$). Following Reinhold, we call such an action \textit{kinetic} if it comes from an actual (smooth) $G$-action on $M$. Reinhold \cite{Reinhold19} constructed non-kinetic smooth actions of the group $\mathrm{SU}(2)$ on high-dimensional manifolds, and recently Krannich--Randal-Williams constructed some examples of non-kinetic coherent actions by finite groups \cite{krw26}. The latter was inspired by a construction of Kang--Park--Taniguchi \cite{KPT26exotic}, who also gave a similar construction in dimension four \cite{KTP26nonkinetic}. In the results of this article, we start with a high-dimensional aspherical manifold $M$ with hyperbolic fundamental group and a map
\begin{equation}
	BC_p \rightarrow B\hAut(M) \simeq B\widetilde{\mathrm{Homeo}}(M)
\end{equation}
and lift it to an actual group action under the conditions of \cref{thm:main_theorem}. It would be interesting to investigate the relation of the theory of coherent actions with the methods used in this article, including equivariant or isovariant Poincar\'e duality, and equivariant surgery.

\subsection{Conventions}
We freely use the language of $\infty$-categories as developed by Joyal and Lurie. The term \textit{category} is used for an $\infty$-category. We write $\spc$ for the category of spaces, and $\spectra$ for the category of spectra. The term \textit{compact} is used in its categorical meaning, so a compact space is a $G$-space of the homotopy type of a finitely dominated $G$-CW complex. The exception being that ``compact manifold" is used for a manifold which is covering-compact. We sometimes write $e$ for the trivial group, so if $X$ is a $G$-space the notation $X^{e}$ refers to fixed points of the trivial group, i.e. the underlying space. For a discrete group $\Gamma$ and $F \subset \Gamma$ a subgroup, we write $N_\Gamma F$ for its normaliser and $W_\Gamma F = N_\Gamma F/F$ for its Weyl group.

\subsection{Acknowledgements}

I thank my advisor Wolfgang L\"uck for introducing me to the Nielsen realisation problem; this article is part of my PhD thesis. Further thanks go to Dominik Kirstein and Kaif Hilman for exploring equivariant Poincar\'e duality with me, which was crucial for this project. I thank Shmuel Weinberger for pointing out the relevance of approximate fibrations for equivariant manifold theory to me, and Andrea Bianchi, Emma Brink, Branko Juran, Markus Land as well as Marco Volpe for helpful conversations.

\tableofcontents

\section{Setup and decompositions from equivariant Poincar\'e duality}

\label{sec:setup}

In this subsection, we recall some results from equivariant and isovariant Poincar\'e duality that are crucial in the proof of \cref{thm:main_theorem}. Let $G$ be a finite group. For the purpose of our main theorem, $G$ will be $C_p$, but for most of the article there is no reason to make this restriction on the generality.
A space is \textit{aspherical} if it is connected and has contractible universal cover.
Assume that $X$ is an aspherical space with fundamental group $\pi$, so that $X \simeq B\pi$.  Let $\alpha \colon G \rightarrow \hAut(X)$ be a map of $E_1$-groups.  This can equivalently be encoded by an extension of discrete groups
\begin{equation}
	\label{eq:extension}
	1 \rightarrow \pi \rightarrow \orbifoldfundamentalgroup \rightarrow G \rightarrow 1.
\end{equation}
Indeed, given $\alpha$, we can set $\orbifoldfundamentalgroup \coloneqq \pi_1 (X_{hG})$ to be the \textit{orbifold fundamental group} of the action. The fibre sequence $X \rightarrow X_{hG} \rightarrow BG$ induces the extension \cref{eq:extension} on fundamental groups.
Conversely, in \cref{eq:extension} we can pass to classifying spaces to get a map $B\orbifoldfundamentalgroup \rightarrow BG$ with fibre $B\pi$, exhibiting a $G$-action on $B\pi$ by fibre transport.
As mentioned in the introduction, if $\pi$ has trivial centre, the $E_1$-group $\hAut(X)$ is equivalent to the discrete group $\Out(\pi)$. Hence, in this case the extension above can be recovered from the pair $(X,G \subset \Out(\pi))$.

We write $\nielsspace$ for the quotient $\pi \backslash E_\finite \orbifoldfundamentalgroup$. Viewing $X \in \spc^{BG}$ as a space with $G$-action, $\nielsspace$ is the image of $X$ under the right adjoint of the restriction $\spc_{G} \rightarrow \spc^{BG}$, see \cite{borel}. This right adjoint is fully faithful with image the \textit{Borel} $G$-spaces, whence the name.

The main results of this section are about decompositions of $\nielsspace$ needed in the rest of the article. 
Crucially, to prove \cref{thm:main_theorem} we will have to decompose $\nielsspace$ into its fixed part $\nielsspace^{C_p}$ and a complement $C$, intuitively thought of as the complement $M \setminus M^{C_p}$ of the fixed points for a manifold $M$ with $C_p$-action. This decomposition is additional data on $\nielsspace$ and might not exist in general.

\subsection{Good decompositions and isovariant Poincar\'e spaces}

In this subsection we define \textit{good decompositions} of $G$-spaces $X$ as a weaker version of semifree isovariant $G$-Poincar\'e spaces that we encounter thereafter. The reader should think of the free-fixed decomposition of a smooth semifree $G$-manifold, see \cite[Constr. 1.2]{semifree}. 

Recall that a map $f \colon A \rightarrow X$ of spaces is a \textit{$\pi$--$\pi$-equivalence} if it induces an equivalence on $1$-truncations. Equivalently, the map $\pi_0 f \colon \pi_0 A \rightarrow \pi_0 X$ is a bijection, and for each choice of basepoint $a \in A$ the map $\pi_1(f,a) \colon \pi_1(A,a) \rightarrow \pi_1(X,f(x))$ on fundamental groups is an equivalence.

\begin{defn}
	A \textit{good decomposition} of a compact $G$-space $X$ is a pushout square
	\begin{equation}
		\label{eq:diagram_setup_isovariant}
		\begin{tikzcd}
			\partial C \ar[r] \ar[d, "p"] & C \ar[d]\\
			X^{G} \ar[r, "\mathrm{incl}"] & X
		\end{tikzcd}
	\end{equation}
	satisfying the following conditions.
	\begin{enumerate}
		\item The $G$-action on $C$ and $\partial C$ is free.
		\item The pair $(C^e, \partial C^e)$ is a Poincar\'e duality pair.
		\item The map $\partial C^{e} \rightarrow X^{G}$ is a $\pi$--$\pi$-equivalence.
		\item For $x \in X^{G}$, the fibre $\{x\} \times_{X^{G}} \partial C$ is a compact $G$-space.
	\end{enumerate}
	A good decomposition of $X$ is \textit{finite} if all the spaces occurring in \cref{eq:diagram_setup_isovariant} are finite $G$-spaces. 
\end{defn}

\begin{rmk}
	If $X$ admits a good decomposition, and $e \neq H \subset G$ is any subgroup, then the map $X^{G} \rightarrow X^{H}$ is an equivalence. In  other words, $X$ is a \textit{semifree $G$-space}. The main results of this article are for the group $C_p$, where this condition is vacuous. 
\end{rmk}

\begin{defn}
	\label{def:complement_quotient}
	Given a good decomposition of a compact $G$-space $X$ as in \cref{eq:diagram_setup_isovariant}, we call $(C,\partial C)$ the \textit{complement} (of $X^G$ in $X$) and $(Q,\partial Q) \coloneqq G \backslash (C,\partial C)$ the \textit{complement-quotient}.
\end{defn}

One part of this article is about showing that given a good decomposition for $\nielsspace$ as in \cref{eq:diagram_setup_isovariant}, there is a compact manifold pair equivalent to the pair $(Q,\partial Q) \coloneqq G \backslash (C,\partial C)$. Or equivalently, there is a compact manifold pair $(\overline{N},\partial \overline{N})$ within the $G$-homotopy type $(C,\partial C)$. But to find a closed $G$-manifold within the $G$-homotopy type $\nielsspace$, we actually need the following stronger notion.

\begin{defn}[\cite{semifree}, Def. 2.3.4.]
	\label{defn:semifree_isovariant_structure}
	A compact $G$-space $X$ with a good decomposition as in \cref{eq:diagram_setup_isovariant} is a \textit{semifree isovariant $G$-Poincar\'e duality space} if $X^{G}$ is a Poincar\'e duality space, and for each $x \in X^{G}$ the fibre $\partial C^{e} \times_{X^{G}} \{x\}$ is equivalent to a sphere $S^l$ of some dimension $l$. If the good decomposition of $X$ is additionally finite, it is called a \textit{finite isovariant $G$-Poincar\'e duality space}. 
\end{defn}

\subsection{Finiteness}

The homotopy types of compact manifolds are \textit{finite} spaces, i.e. spaces admitting a finite cell structure. Given a $G$-space with a good decomposition \cref{eq:diagram_introduction_isovariant}, checking finiteness of $C$ and $\partial C$ as $G$-spaces is a first obstruction to finding a compact $G$-manifold pair $(\overline{N},\partial \overline{N})$ in the $G$-homotopy type $(C,\partial C)$.
\begin{recollect}
	\label{recollect:compact_implies_finite}
	If $X$ is a compact $C_p$-space with free action and $X^{e} \simeq S^l$ for some $l$, then $C_p \backslash X$ is even finite, because it is equivalent to some lens space.
	See \cite[Sec. 2.3]{Nicholson24} for more groups with similar properties, e.g. also a compact free $\mathrm{SL}_2(\mathbb{F}_p)$-space $X$ with $X^{e} \simeq S^l$ has the property that $G \backslash X$ is finite for $p \geq 3$.
\end{recollect}

\begin{lem}
	\label{lem:finiteness_for_good_decompositions}
	Let consider a compact $G$-space $X$ with a good decomposition
	\begin{equation}
		\begin{tikzcd}
			\partial C \ar[r] \ar[d, "p"'] & C \ar[d, "p'"]\\
			X^{G} \ar[r] & X.
		\end{tikzcd}
	\end{equation}
	Then:
	\begin{enumerate}
		\item Finiteness of $C$ is implied by finiteness of $X$ and $\partial C$.
		\item If $G=C_p$ and the square is a $G$-isovariant Poincar\'e space, then finiteness of $\partial C$ is implied by finiteness of $X^{G}$.
	\end{enumerate}
\end{lem}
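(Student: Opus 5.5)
For (1) I would use Wall's finiteness obstruction, applied equivariantly. All spaces in the square are compact $G$-spaces, so each carries a $G$-equivariant (tom Dieck/Lück) finiteness obstruction in $\bigoplus_{H} \widetilde{\ncKthy}_0(\bbZ W_G H \ldots)$, suitably summed over components. A compact $G$-space is finite exactly when this obstruction vanishes. The obstruction is additive on homotopy pushouts: for a pushout $A \leftarrow B \rightarrow D$ with all terms compact, $o(A\cup_B D) = i_*o(A) + j_*o(D) - k_*o(B)$. Since $C$ and $\partial C$ are free, their obstructions are simply the Wall obstructions of $G\backslash C$ and $G\backslash \partial C$, living in $\widetilde{\ncKthy}_0$ of group rings of fundamental groups of these quotients.

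The point is to avoid needing finiteness of $X^G$. Note that $X^G$ is a retract-type piece of $X$: the $G$-fixed part of the pushout is exactly $X^G$ (because $C$ and $\partial C$ are free), so the obstruction component of $X$ at the isotropy group $G$ equals $o(X^G)$. Finiteness of $X$ therefore gives finiteness of $X^G$. I would then apply the sum formula in the free stratum, i.e. to the quotient square $G\backslash\partial C \to G\backslash C$ and $G\backslash X^G\ldots$. More cleanly, I would compare the $e$-isotropy components. The free part of $X$ relative to $X^G$ is the pair $(X, X^G)$, and excision gives $(X,X^G) \simeq (C,\partial C)$ relative to the boundary. Hence $o(C) = o(\partial C) + o(C,\partial C) = o(\partial C) + o(X,X^G)$, pushed forward into $\widetilde{\ncKthy}_0(\bbZ\pi_1(G\backslash C))$ componentwise. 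The term $o(X,X^G)$ vanishes because $X$ and $X^G$ are finite. The one step needing care is making the relative obstruction $o(X,X^G)$ live in the right group. For this I would use the isotropy splitting of the equivariant finiteness obstruction (Lück's transformation groups book) together with the fact that $\partial C \to X^G$ is only a $\pi$–$\pi$-equivalence on underlying spaces, not on quotients. I expect this bookkeeping of group rings to be the main technical nuisance, though not a real obstacle.

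For (2), I would take $G=C_p$ with the square isovariant. Then $p\colon \partial C \to X^G$ is a map whose fibres $F_x=\{x\}\times_{X^G}\partial C$ are compact free $C_p$-spaces with $F_x^e\simeq S^l$. By \cref{recollect:compact_implies_finite}, each $C_p\backslash F_x$ is finite, indeed a lens space up to equivalence. Hence $G\backslash \partial C \to X^G$ is a fibration over a finite base with fibre a finite complex. By the fibration formula for Wall's obstruction (Pedersen–Taylor / Lück: the total space of a fibration with finitely dominated fibre over a finite CW base has obstruction $p^*$ of the fibre obstruction, summed over cells), the total space is finitely dominated with obstruction given by transfers of the fibres' obstructions. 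Each of those is zero, so $G\backslash\partial C$ is homotopy finite, and so is $\partial C$ as a free $G$-space. The main check is that the fibre transfer formula applies, which needs the base to be finite and connected componentwise; finiteness of $X^G$ supplies exactly that.
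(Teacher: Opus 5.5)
Your part (2) is the paper's argument. $C_p\backslash\partial C$ is the colimit over the finite complex $X^G$ of the quotients $C_p\backslash F_x$, each finite by \cref{recollect:compact_implies_finite}, and such a colimit is finite. An induction over the cells of $X^G$ already suffices, so the fibration/transfer formula is more than you need. For (1) you also follow the paper's route: L\"uck's equivariant finiteness obstruction, its sum formula, and the isotropy splitting \cref{eq:computation_of_equivariant_proj_class}. Your relative bookkeeping $o(C)=i_*o(\partial C)+o(C,\partial C)$ with $p'_*o(C,\partial C)=o(X,X^G)=0$ is equivalent to the paper's use of the absolute sum formula. You are also right that finiteness of $X^G$, which the paper uses tacitly, follows from finiteness of $X$.

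However, the step you defer as ``bookkeeping'' is the entire content of (1), and as you state it, it points the wrong way. Knowing $p'_*o(C,\partial C)=0$ in $\equivprojclass{G}{X}$ gives $o(C,\partial C)=0$ only if $\equivprojclass{G}{p'}\colon\equivprojclass{G}{C}\to\equivprojclass{G}{X}$ is injective. Nothing can be pushed ``into'' $\reducedK_0$ of $G\backslash C$. This injectivity is where the $\pi$--$\pi$ hypothesis enters, and it does hold, for the following reasons.

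Since $C$ is free, only the summand $\reducedK_0(C^e_{hG})$ occurs for $C$. By naturality it maps to the $H=e$ summand $\reducedK_0(X^e_{hG})$ of $X$. Note that this summand is built from the homotopy orbits of the underlying space, not the genuine quotient $G\backslash X$. As $(-)^e$ and $1$-truncation preserve pushouts, $C^e\to X^e$ is a cobase change of the $\pi$--$\pi$-equivalence $\partial C^e\to X^G$, hence itself a $\pi$--$\pi$-equivalence. Then so is $Q\simeq C^e_{hG}\to X^e_{hG}$, and since $\reducedK_0$ inverts $\pi$--$\pi$-equivalences, the map on summands is an isomorphism.

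So your worry that the $\pi$--$\pi$ property fails ``on quotients'' is a red herring: it does fail for $G\backslash\partial C\to X^G$, but that map never enters. Once you write out this identification, your argument is complete.
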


\begin{recollect}
	To prove \cref{lem:finiteness_for_good_decompositions}, we use L\"uck's equivariant finiteness obstruction, a generalisation of Wall's $K$-theoretic finiteness obstruction \cite[Sec. 14]{lueck_transformation}. Let $\abeliangroups$ denote the category of abelian groups. There is a functor
	\[ \equivprojclass{G}{-} \colon \spc_G \rightarrow \abeliangroups.  \]
	Whenever $X$ is a compact $G$-space, there is a well-defined element $\equivfiniteness(X) \in \equivprojclass{G}{X}$, which vanishes exactly if $X$ has the homotopy type of a finite $G$-CW complex. 
	The element $\equivfiniteness(X) \in \equivprojclass{G}{X}$ is called the \textit{equivariant finiteness obstruction} of $X$. Given a pushout of compact $G$-spaces
	\begin{equation}
		\begin{tikzcd}
			X \ar[r] \ar[d] \ar[dr, "h"] & X' \ar[d, "i"] \\
			Y \ar[r, "j"] & Y'
		\end{tikzcd}
	\end{equation}
	the equivariant finiteness obstructions satisfy the sum formula
	\begin{equation}
		\equivfiniteness(Y') = \equivprojclass{G}{i} (\equivfiniteness(X')) + \equivprojclass{G}{j} (\equivfiniteness(Y)) - \equivprojclass{G}{h} (\equivfiniteness(X)).
	\end{equation}
	If $X$ is compact, L\"uck constructs \cite[Thm. 14.46]{lueck_transformation} an explicit natural isomorphism
	\begin{equation}
		\label{eq:computation_of_equivariant_proj_class}
		\bigoplus_{(H)}  \reducedK_0( X^H_{hW_G H} ) \xrightarrow{\simeq} \equivprojclass{G}{X}.
	\end{equation}
	Here, the indexing set is a set of representatives of conjugacy classes of subgroups of $G$.
\end{recollect}

\begin{proof}[Proof of \cref{lem:finiteness_for_good_decompositions}]
	For the first part, we have to show that finiteness of $X$ and $\partial C$ implies finiteness of $C$. From the sum formula for the equivariant finiteness obstruction, we see that the image of $\equivfiniteness(C) \in \equivprojclass{G}{C}$ in $\equivprojclass{G}{X}$ is zero. Hence, we have to argue that the map
	\begin{equation}
		\equivprojclass{G}{p'} \colon \equivprojclass{G}{C} \rightarrow \equivprojclass{G}{X}
	\end{equation}
	is injective. From the natural isomorphism \cref{eq:computation_of_equivariant_proj_class}, it suffices to show that $\reducedK_0(C^{e}_{hG}) \rightarrow \reducedK_0(X^{e}_{hG})$ is injective. But this map is even an isomorphism: since $\partial C^{e} \rightarrow X^{G}$ is a $\pi$--$\pi$-equivalence, so is $C^{e} \rightarrow X^{e}$, and $\reducedK_0(-)$ inverts $\pi$--$\pi$-equivalences.
	To prove the second assertion, note that by \cref{recollect:compact_implies_finite} the space $\partial Q = C_p \backslash \partial C^{e}$ can be obtained as a colimit over the finite complex $X^G$ of finite complexes, and hence is itself finite.
\end{proof}

\subsection{Existence of finite isovariant structures for $\nielsspace$}

In this subsection, we give the main existence theorem on additional structure for $\nielsspace$ that we use to prove \cref{thm:main_theorem}. 

\begin{thm}
	\label{thm:existence_of_finite_isovariant_structures}
	Suppose given an aspherical manifold $M^d$ with word-hyperbolic fundamental group $\pi$, and a subgroup $C_p \subset \Out(\pi)$. Assume that $M^{hC_p}$ is a Poincar\'e duality space of dimension at most $(d-2)/2$. Then the associated $C_p$-space $\nielsspace$ admits the structure of a finite isovariant $C_p$-Poincar\'e space.
\end{thm}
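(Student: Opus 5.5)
The plan has three stages. First, show that $\nielsspace$ is a compact (indeed finite) $C_p$-Poincar\'e space in the genuine equivariant sense of \cite{pd1,pd2}. Second, apply the isovariant existence theorem of \cite{semifree} to obtain a semifree isovariant structure. Third, upgrade compactness to finiteness using \cref{lem:finiteness_for_good_decompositions}.

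\textbf{Stage one: identify the fixed points and show Poincar\'e duality.}
\begin{itemize}
\item Since $\pi$ is word-hyperbolic, $\orbifoldfundamentalgroup$ is a finite extension of a hyperbolic group, so it is hyperbolic. By \cite{BridsonHaefliger} it therefore has finitely many conjugacy classes of finite subgroups. The Rips complex then gives a cocompact model for $E_\finite \orbifoldfundamentalgroup$, so $\nielsspace$ is a finite $C_p$-CW complex.
\item By \cref{appendixlemma:computing_fixed_points}, $\nielsspace^{C_p} \simeq M^{hC_p} \simeq \coprod_{F\in\maximal} BW_\orbifoldfundamentalgroup F$. By hypothesis this is a Poincar\'e duality space, of dimension $n \leq (d-2)/2$ in each component.
\item The underlying space $\nielsspace^e \simeq M$ is a $d$-dimensional Poincar\'e space. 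The action is semifree, so we still need to verify genuine equivariant Poincar\'e duality of $\nielsspace$. Following \cite{pd2}, this amounts to dualising the fundamental class across the isotropy strata.
\item I would use the criterion from \cite{pd2}: a compact $C_p$-space whose underlying and fixed-point spaces are Poincar\'e admits a genuine $C_p$-Poincar\'e structure once the Borel (Tate) fundamental class restricts compatibly. For $\nielsspace$, which is Borel, the genuine fundamental class is determined by the Borel one. So I expect the condition to come down to Smith theory at $p$ odd and orientability of $M$, which makes the action orientation-preserving.
\end{itemize}

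\textbf{Stage two: produce the isovariant structure.} This is the step I expect to be the main obstacle. The existence theorem in \cite{semifree} constructs, for a $C_p$-Poincar\'e space, a good decomposition in which the fibres $\partial C^e\times_{X^G}\{x\}$ are spheres $S^{d-n-1}$. It requires a codimension/gap hypothesis of roughly $d - n \geq n+2$, or $d \geq 2n+2$, and this is exactly where the assumption $n \leq (d-2)/2$ enters.
\begin{itemize}
\item The spherical fibration can be read off as the homotopy-theoretic normal bundle: $\partial C$ is the $C_p$-homotopy link of $\nielsspace^{C_p}$ in $\nielsspace$.
\item In the gap range, the equivariant Spivak-type construction is unobstructed, giving a $\pi$--$\pi$-equivalence $\partial C^e \to \nielsspace^{C_p}$. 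Here the codimension is at least $3$, since $d-n\geq n+2\geq 3$ when $n\geq 1$. The points case, $n=0$, is handled directly by taking a small link.
\item It also gives the Poincar\'e pair $(C^e,\partial C^e)$ as the homotopy complement. I would verify this last property by excision in the pushout square together with duality for $M$ and for the fixed points.
\end{itemize}

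\textbf{Stage three: finiteness.} By \cref{lem:finiteness_for_good_decompositions}(2), $\partial C$ is finite because $\nielsspace^{C_p}$ is finite: each $BW_\orbifoldfundamentalgroup F$ is a Poincar\'e complex with a finite model, using that the Weyl groups are hyperbolic and Rips complexes again. Then, since $\nielsspace$ is finite, part (1) of the same lemma shows $C$ is finite.
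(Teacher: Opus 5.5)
Your proposal follows the paper's proof. The existence of the isovariant structure is taken from \cite{semifree}; your Stages one and two only unpack that citation, and the conditions you expect to need ($p$ odd, $M$ oriented) are not hypotheses of this theorem and are not needed. As in the paper, finiteness of $\nielsspace$ comes from a Rips complex for $\orbifoldfundamentalgroup$, and \cref{lem:finiteness_for_good_decompositions} then gives finiteness of $\partial C$ and $C$.

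The one difference is that the paper gets finiteness of $\nielsspace^{C_p}$ directly, as the fixed points of the finite $C_p$-CW complex $\nielsspace$. Your route through the spaces $BW_\orbifoldfundamentalgroup F$ also works, but you must say why $W_\orbifoldfundamentalgroup F$ is torsionfree: $W_\orbifoldfundamentalgroup F \cong N_\orbifoldfundamentalgroup F \cap \pi$. For a hyperbolic group with torsion, the Rips complex only gives a finite model of $E_\finite$, not of $B$.
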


\begin{proof}
	In \cite{semifree}, it was proven that $\nielsspace$ admits the structure of an isovariant $C_p$-Poincar\'e space, and we check that it is indeed finite. The orbifold fundamental group $\orbifoldfundamentalgroup = \pi_1 M_{hC_p}$ is again word-hyperbolic, since it is commensurable to $\pi$. A Rips complex of $\orbifoldfundamentalgroup$ presents a finite cell structure on $E_\finite \orbifoldfundamentalgroup$. Since $\nielsspace \simeq \pi \backslash E_\finite \orbifoldfundamentalgroup$ we conclude that $\nielsspace$ is a finite $C_p$-space. This implies that $\nielsspace^{C_p} \simeq M^{hC_p}$ is finite as well. Now \cref{lem:finiteness_for_good_decompositions} implies that every isovariant structure on $\nielsspace$ is indeed finite.
\end{proof}

\begin{rmk}	
	The quoted results of \cite{semifree} rely on \cite{pd2}, which was only written in the generality of cyclic groups of prime order. It seems plausible that the results of \cite{pd2} can be generalised, extending the validity of \cref{thm:existence_of_finite_isovariant_structures}.
	To account for this possibility, we often require a good decomposition (or isovariant structure, etc.) of $\nielsspace$ as a datum instead of making the assumptions of \cref{thm:main_theorem} and citing \cref{thm:existence_of_finite_isovariant_structures} in the rest of this article.
\end{rmk}

\section{Surgery-theoretic preliminaries}

In this preliminary section, we recall algebraic $K$- and $L$-theory in the framework of Poincar\'e categories introduced in \cite{nineauthors}. We give a construction of the Poincar\'e category relevant to the total surgery obstruction. This Poincar\'e category is constructed covariantly functorial in maps of pairs of spaces, and contravariantly functorial in covering maps. We review the total surgery obstruction of a Poincar\'e duality pair, whose vanishing we want to verify in our special case in order to prove \cref{thm:main_theorem}. 

\subsection{$K$- and $L$-theory}
We use the theory of \textit{Poincar\'e categories} as developed in the series \cite{nineauthors,nineauthorsii}. Write $\catp$ for the category of Poincar\'e categories, which are pairs
\[ (\category{C}, \qoppa) \in \catp \]
with $\category{C}$ a stable category and $\qoppa \colon \category{C}\op \rightarrow \spectra$ a perfect quadratic functor. There is a functor $\Poinc \colon \catp \rightarrow \spc$ assigning to a Poincar\'e category its space of \textit{Poincar\'e objects}. Intuitively: a Poincar\'e category $(\category{C},\qoppa)$ is a category of algebraic objects $\category{C}$ and a type of forms $\qoppa$. A Poincar\'e object is a pair $(x,q)$ where $x \in \category{C}$ is an object, and $q \in \qoppa(x)$ is a ``non-degenerate form of type $\qoppa$" on it. For a ring $R$, write $\module{R}^\finitemodule$ for the smallest stable subcategory of the (derived) module ($\infty$--)category $\module{R}$ generated by $R$.  Objects in $\module{R}^\finitemodule$ are called \textit{finite} $R$-modules.
\begin{example}
	There is a Poincar\'e category given by the pair $(\module{\bbZ}^\finitemodule,\quadratic)$ where $\quadratic$ is the functor
	\[ \quadratic(x) \coloneqq \hom(x \otimes x, \bbZ)_{hC_2}. \]
	A Poincar\'e object in $(\module{\bbZ}^\finitemodule,\quadratic)$  is a finite $\bbZ$-module $x$ carrying a non-degenerate quadratic form.
\end{example}

Ranicki's work on algebraic surgery and the subsequent redevelopment and generalisation in \cite{nineauthors, nineauthorsii} identifies many objects encountered in surgery theory in terms of \textit{Poincar\'e-Verdier invariants} of Poincar\'e categories. We recall a few such invariants here to fix our notation for the rest of the article.
\begin{enumerate}
	\item \textit{Connective algebraic $K$-theory} is a functor
	\[ \Kthy \colon \catp \rightarrow \spectra^{BC_2}_{\geq 0} \subset \spectra^{BC_2}  \]
	which receives a natural transformation $\Poinc \rightarrow \Omega^{\infty}\Kthy(-)^{hC_2}$.
	\item \textit{Non-connective algebraic $K$-theory} is a functor
	\[ \ncKthy \colon \catp \rightarrow \spectra^{BC_2} \]
	which comes with a natural transformation $\Kthy \rightarrow \ncKthy$. This natural transformation does \textit{not} become an equivalence when postcomposed with the connective cover functor $\tau_{\geq 0}$. However, it does induce an equivalence $\tau_{\geq 1} \Kthy \rightarrow \tau_{\geq 1} \ncKthy$.
	\item Postcomposing the map $\Kthy \rightarrow \ncKthy$ with the Tate fixed-point functor $(-)^{tC_2}$ gives rise to two new Poincar\'e-Verdier invariants and a comparison map between them
	\[ \left(\tateK \rightarrow \nctateK \right) \colon \catp \rightarrow \spectra.  \]
	\item \textit{Algebraic $L$-theory} is a functor
	\[ \Lthy \colon \catp \rightarrow \spectra \]
	which serves as a broad generalisation and refinement of Wall's $L$-groups of rings with an anti-involution. In special cases, its homotopy groups capture invariants of symmetric bilinear or quadratic forms, and as such occur as obstruction groups in surgery theory.
	\item \textit{Karoubi-invariant algebraic $L$-theory} is a functor
	\[ \karLthy \colon \catp \rightarrow \spectra. \]
	It comes with a natural transformation $\Lthy \rightarrow \karLthy$ which can be described by a universal property:
	while algebraic $L$-theory does not invert Karoubi equivalences \cite[Def. 1.3.1.]{nineauthorsii}, Karoubi-invariant algebraic $L$-theory does, and this transformation exhibits it as the initial Karoubi-invariant approximation of algebraic $L$-theory from the right among Poincar\'e-Verdier invariants.
\end{enumerate}
Our computational goal can be stated purely in terms of algebraic $L$-theory. However, Karoubi-invariant algebraic $L$-theory has significant computational advantages over its non--Karoubi-invariant counterpart, such as the Farrell--Jones conjecture as we recall in \cref{subsec:Davis_Lueck_assembly}. Algebraic $K$-theory and its non-connective counterpart will primarily be used to get computational access to the difference of $\Lthy$ and $\karLthy$. The following result is from the forthcoming article \cite{nineauthorsiv}.

\begin{prop}
	\label{prop:karoubi_invariant_L_theory_computation}
	The natural transformations $\Lthy \rightarrow \karLthy$ and $\tateK \rightarrow \nctateK$ fit into a commuting square
	\begin{equation}
		\begin{tikzcd}
			\Lthy \ar[r] \ar[d] & \tateK \ar[d]\\
			\karLthy \ar[r] & \nctateK
		\end{tikzcd}
	\end{equation}
	which induces a pullback of spectra when evaluated at any Poincar\'e category $(\category{C},\qoppa)$.
\end{prop}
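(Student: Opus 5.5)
Since the statement is attributed to the forthcoming \cite{nineauthorsiv}, the plan is to reconstruct the argument from the structure set up in \cite{nineauthors,nineauthorsii}. The square comes from the fundamental fibre sequence relating $\Kthy$, genuine hermitian $K$-theory $\mathrm{GW}$ and $\Lthy$. For a Poincar\'e category $(\category{C},\qoppa)$ there is a natural bifibre sequence
\[ \Kthy(\category{C},\qoppa)_{hC_2} \rightarrow \mathrm{GW}(\category{C},\qoppa) \rightarrow \Lthy(\category{C},\qoppa). \]
Its refinement, the fundamental pullback square, expresses $\mathrm{GW}$ as the pullback of $\Lthy \rightarrow \Kthy^{tC_2} \leftarrow \Kthy^{hC_2}$. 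This produces the top horizontal map $\Lthy \rightarrow \tateK$. I would first construct the Karoubi-invariant analogue. Define non-connective (Karoubi-invariant) $\mathrm{GW}$ by the same procedure used to pass from $\Kthy$ to $\ncKthy$: iterate a Poincar\'e--Bass--Karoubi delooping along suspension-type Poincar\'e--Verdier sequences, or equivalently take the initial Karoubi-localising approximation. Since $\Kthy(-)_{hC_2} \rightarrow \mathrm{GW} \rightarrow \Lthy$ is a sequence of Poincar\'e--Verdier invariants and the construction is exact, we obtain $\ncKthy_{hC_2} \rightarrow \mathbb{GW} \rightarrow \karLthy$ together with a fundamental square for $\mathbb{GW}$ with corner $\nctateK$. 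Naturality of the comparison maps then gives the commuting square in the statement, with the vertical maps induced by $\Kthy \rightarrow \ncKthy$ and $\Lthy \rightarrow \karLthy$.

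For the pullback property, it suffices to show that the induced map on horizontal (or vertical) fibres is an equivalence. Comparing the two fibre sequences $\Kthy_{hC_2} \rightarrow \mathrm{GW} \rightarrow \Lthy$ and $\ncKthy_{hC_2} \rightarrow \mathbb{GW} \rightarrow \karLthy$, together with the norm sequences $\Kthy_{hC_2} \rightarrow \Kthy^{hC_2} \rightarrow \tateK$ and their non-connective analogues, reduces the claim to showing that the square
\[ \mathrm{GW} \rightarrow \Kthy^{hC_2}, \qquad \mathbb{GW} \rightarrow \ncKthy^{hC_2} \]
is cartesian. Equivalently, the cofibre of $\mathrm{GW} \rightarrow \mathbb{GW}$ must agree with the cofibre of $\Kthy^{hC_2} \rightarrow \ncKthy^{hC_2}$, that is, with $\cofib(\Kthy \rightarrow \ncKthy)^{hC_2}$. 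The cofibre $\cofib(\Kthy \rightarrow \ncKthy)$ is concentrated in degrees $\le 0$; it is the Karoubi defect, controlled by $\widetilde{\ncKthy}_0$ and the negative $K$-groups. So the key step is to show that $\mathrm{GW}$ and $\mathbb{GW}$ differ exactly by the homotopy fixed points of this defect.

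I would prove this using the universal properties. Both $\mathrm{GW}$ and $\Kthy^{hC_2}$ are bordism invariants when the appropriate $\Lthy$-part is taken into account. The Bass--Karoubi delooping procedure applied to $\mathrm{GW}$ is computed degreewise by the analogous procedure applied to $\Kthy$ with its $C_2$-action. Taking homotopy fixed points commutes with the limits involved, and the $\Lthy$-term contributes only through the Ranicki-type Rothenberg sequences. Those sequences say precisely that $\cofib(\Lthy \rightarrow \karLthy)$ is built from Tate cohomology of the $\widetilde{\ncKthy}_0$ and negative $K$-groups, matching $\cofib(\tateK \rightarrow \nctateK)$. Concretely, I would verify the claim first on the Karoubi-completion step $\category{C} \rightarrow \category{C}^{\natural}$, where $\Lthy$ changes by the Tate construction of $\widetilde{\ncKthy}_0$, and then on one Bass delooping step. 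Induction and passage to the colimit then finish the argument.

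The main obstacle I expect is the single Karoubi-completion step: identifying $\cofib\bigl(\Lthy(\category{C},\qoppa) \rightarrow \Lthy(\category{C}^\natural,\qoppa^\natural)\bigr)$ naturally, as a spectrum and not merely on homotopy groups, with $\cofib(\Kthy(\category{C}) \rightarrow \Kthy(\category{C}^\natural))^{tC_2}$, compatibly with the fundamental square. My first attempt would be to use that $\category{C} \subset \category{C}^\natural$ is a dense inclusion classified by a subgroup of $\ncKthy_0$. The relative $\Lthy$-theory of such a dense inclusion is the Tate construction of the quotient group, and this is a genuinely Poincar\'e--Verdier statement available in \cite{nineauthorsii}.
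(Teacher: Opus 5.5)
The paper does not prove this proposition. It quotes it from the forthcoming \cite{nineauthorsiv}, so your outline must stand on its own, and it has a genuine gap exactly where you locate the main obstacle.

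The formal reductions are fine:
\begin{itemize}
\item Given fundamental squares for $\mathrm{GW}$ and its Karoubi-localising version, the $\Lthy$/$\tateK$ square is cartesian if and only if the $\mathrm{GW}$/$\Kthy^{hC_2}$ square is.
\item A Bass delooping step is just $\Omega$ of the Karoubi-completion step applied to the Verdier quotient of a flasque Poincar\'e category by $\category{C}^\natural$. This works because $\Lthy$ and connective $\Kthy$ are Verdier-localising and vanish on flasque categories.
\item The final colimit does pass through $(-)^{tC_2}$, but not because homotopy fixed points commute with ``the limits involved''; what is at stake is a sequential colimit. The reason is that the cofibres of the partial deloopings $\Kthy^{(n)}\to\ncKthy$ are increasingly coconnective, so $(-)^{hC_2}$ commutes with this colimit, while $(-)_{hC_2}$ commutes with all colimits.
\end{itemize}
The heuristics in your third paragraph should be dropped: neither $\mathrm{GW}$ nor $\Kthy^{hC_2}$ is bordism invariant, and the delooping of $\mathrm{GW}$ is not computed from that of $\Kthy$.

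After these reductions, everything rests on the Karoubi-completion step. For every Poincar\'e category $(\category{C},\qoppa)$, the canonical map $\cofib\bigl(\Lthy(\category{C},\qoppa)\to\Lthy(\category{C}^\natural,\qoppa^\natural)\bigr)\to (HA)^{tC_2}$ must be an equivalence. Here $A=\Kthy_0(\category{C}^\natural)/\Kthy_0(\category{C})$ and $HA$ is its Eilenberg--MacLane spectrum with the induced involution.

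By pasting cartesian squares, this step is itself a special case of the proposition: a spectral form of the $h$-to-$p$ Rothenberg sequence. So, modulo the formal steps, it is equivalent to what you want to prove, and you do not prove it. That $\category{C}\subset\category{C}^\natural$ is classified by a subgroup of $\Kthy_0$ is Thomason's theorem and handles only the $\Kthy$-side. The $\Lthy$-side is merely asserted, with an unspecific attribution to \cite{nineauthorsii}.

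Your worry about obtaining the identification ``as a spectrum'' is misplaced. The square already supplies the comparison map, and it suffices to show that it is a $\pi_*$-isomorphism. What is actually missing is:
\begin{itemize}
\item[(i)] an identification of this map on homotopy groups. After shifting $\qoppa$ one may work in degree $0$, where it should send a relative Poincar\'e object to the class of its underlying object in $\hat{H}^0(C_2;A)$;
\item[(ii)] a Ranicki-style algebraic surgery argument in Poincar\'e categories showing that this map is bijective.
\end{itemize}
Without (i) and (ii), the proposal only reduces the proposition to an equivalent special case of itself.
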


We also recall the notion of \textit{metabolic Poincar\'e categories} and \textit{bordism invariant Poincar\'e-Verdier invariants} for later use. 
Recall from \cite[Sec. 2.3.]{nineauthors} the metabolic construction
\[ \Met \colon \catp \rightarrow \catp, \hspace{3mm} (\category{C},\qoppa) \mapsto (\Ar(\category{C}),\qoppamet). \]
Here, $\Ar(\category{C})$ is the arrow category of $\category{C}$, and $\qoppamet(w \rightarrow x) \coloneqq \fib(\qoppa(x) \rightarrow \qoppa(w))$. There is a Poincar\'e functor $\mathrm{met} \colon \Met(\category{C},\qoppa) \rightarrow (\category{C},\qoppa)$, which on objects extracts the target of an arrow $w \rightarrow x$. Poincar\'e objects in $\Met(\category{C},\qoppa)$ admit an interpretation in terms of Lagrangians of forms. Examples of such Lagrangians are provided by nullbordisms of manifolds \cite[Ex. 2.3.4.]{nineauthors}.
\begin{obs}
	By \cite[Rmk. 7.3.21]{nineauthors}, the Poincar\'e functor $\mathrm{met}$ is an adjunction counit, with the right adjoint being the forgetful functor from Poincar\'e categories to hermitian categories. This forgetful functor commutes with colimits \cite[Prop. 6.1.4.]{nineauthors}, so $\mathrm{Met}$ is colimit-preserving.
\end{obs}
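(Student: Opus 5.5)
The plan is to write $\Met$ as a composite of two functors, each of which preserves colimits; the statement then follows because a composite of colimit-preserving functors is colimit-preserving. Write $\catp \rightarrow \mathrm{Cat}^{\mathrm{h}}$ for the forgetful functor $U$ from Poincar\'e categories to hermitian categories. It remembers the stable category together with its quadratic functor and forgets perfectness and the duality.

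By \cite[Rmk. 7.3.21]{nineauthors}, the Poincar\'e functor $\mathrm{met} \colon \Met(\category{C},\qoppa) \rightarrow (\category{C},\qoppa)$ is the counit of an adjunction $L \dashv U$, where $L \colon \mathrm{Cat}^{\mathrm{h}} \rightarrow \catp$ is the left adjoint sending a hermitian category to its "metabolic" Poincar\'e category. A counit has the form $LU \Rightarrow \id$. So the first step is to record the identification, natural in $(\category{C},\qoppa)$,
\[ \Met(\category{C},\qoppa) \simeq L\,U(\category{C},\qoppa), \]
compatibly with $\mathrm{met}$ being the counit. Concretely, I would check that the construction $(\Ar(\category{C}),\qoppamet)$ is functorial only in the underlying hermitian structure, and that it satisfies the universal property of $L$: Poincar\'e functors $\Met(\category{C},\qoppa) \rightarrow (\category{D},\Psi)$ correspond to hermitian functors $(\category{C},\qoppa) \rightarrow (\category{D},\Psi)$, via composition with the hermitian inclusion $\category{C} \rightarrow \Ar(\category{C})$, $x \mapsto (0 \rightarrow x)$ or its dual. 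Since the paper simply cites this remark, I would take the identification as given.

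The second step is the colimit preservation of each factor. The functor $L$ is a left adjoint, so it preserves all colimits that exist. The functor $U$ preserves colimits by \cite[Prop. 6.1.4.]{nineauthors}; it is also a right adjoint, so it preserves limits too. Hence $\Met \simeq L \circ U$ preserves colimits. For later use, I would note that the same argument shows the transformation $\mathrm{met}$ is natural with respect to colimit diagrams, because counits are natural.

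The only real point requiring care is the first step: making sure the adjunction identification is natural, so that $\Met$ is actually equivalent to $LU$ as functors rather than just objectwise. Here I would rely entirely on \cite[Rmk. 7.3.21]{nineauthors}. Everything after that is formal.
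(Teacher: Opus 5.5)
Your proposal is correct and is exactly the paper's argument. Since $\mathrm{met}$ is the counit of $L\dashv U$ by \cite[Rmk. 7.3.21]{nineauthors}, you get $\Met\simeq LU$; then $L$ preserves colimits as a left adjoint, and $U$ does by \cite[Prop. 6.1.4.]{nineauthors}. One small caution: do not read $L$ as the formula $(\Ar(\category{C}),\qoppamet)$ on arbitrary hermitian categories, since for $\qoppa=0$ and $\category{C}\neq 0$ that is not Poincar\'e; your argument only uses $LU\simeq\Met$, so nothing depends on it.
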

A Poincar\'e-Verdier invariant is called \textit{bordism invariant} if it vanishes on $\Met(\category{C},\qoppa)$ for every Poincar\'e category $(\category{C},\qoppa) \in \catp$. See \cite[Lem. 3.5.6.]{nineauthorsii} for a few equivalent characterisations. 

\begin{prop}
	The Poincar\'e-Verdier functors $\Lthy, \karLthy, \tateK$ and $\nctateK$ are bordism invariant.
\end{prop}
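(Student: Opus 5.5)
The plan is to verify directly that each of the four functors vanishes on $\Met(\category{C},\qoppa)$, treating the $K$-theoretic Tate invariants first. Then I would deduce the statement for $\karLthy$ from \cref{prop:karoubi_invariant_L_theory_computation}, which avoids any separate analysis of Karoubi-invariant $L$-theory.

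\textbf{The Tate invariants.} The underlying stable category of $\Met(\category{C},\qoppa)$ is $\Ar(\category{C})$. It sits in a split Verdier sequence whose two outer terms are copies of $\category{C}$, recording the source $w$ and the target $x$ of an arrow $w \rightarrow x$. The duality of $\qoppamet$ sends $(w \rightarrow x)$ to $(\mathrm{D}x \rightarrow \mathrm{D}w)$, so it interchanges these two pieces.

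By additivity of connective and of non-connective $K$-theory, I get $C_2$-equivariant equivalences
\[ \Kthy(\Met(\category{C},\qoppa)) \simeq \Kthy(\category{C}) \oplus \Kthy(\category{C}) \quad\text{and}\quad \ncKthy(\Met(\category{C},\qoppa)) \simeq \ncKthy(\category{C}) \oplus \ncKthy(\category{C}). \]
In both cases $C_2$ acts by swapping the summands, twisted by the duality. So these spectra are induced, i.e. of the form $C_{2+} \wedge E$. Induced spectra have vanishing Tate construction, hence $\tateK$ and $\nctateK$ vanish on metabolic categories.

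\textbf{$L$-theory.} This is the step I expect to need the most care. My first attempt is to cite the bordism invariance of $\Lthy$ from \cite{nineauthorsii}. Concretely, the relevant input is the statement that $\Poinc(\Met(\category{C},\qoppa))$ is the space of Lagrangian-equipped forms. Every such object is canonically nullbordant, since the Lagrangian itself provides the nullbordism. Moreover, this nullbordism can be performed naturally in families: it uses the cotensoring of $\Met$ with the simplices underlying the bordism construction. Together these give $\Lthy(\Met(\category{C},\qoppa)) \simeq 0$.

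\textbf{Karoubi-invariant $L$-theory.} I evaluate the pullback square of \cref{prop:karoubi_invariant_L_theory_computation} at $\Met(\category{C},\qoppa)$. This exhibits $\karLthy(\Met(\category{C},\qoppa))$ as the pullback of
\[ \Lthy(\Met(\category{C},\qoppa)) \rightarrow \tateK(\Met(\category{C},\qoppa)) \leftarrow \nctateK(\Met(\category{C},\qoppa)). \]
All three terms vanish by the previous two steps, so $\karLthy(\Met(\category{C},\qoppa)) \simeq 0$. This completes the proof that all four functors are bordism invariant.
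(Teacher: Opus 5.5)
Your proposal is correct in substance but organised differently from the paper, whose proof consists only of citations: \cite[Cor.~4.4.6]{nineauthorsii} for $\Lthy$, \cite[Ex.~3.3.6(i)]{nineauthorsii} for $\tateK$ and $\nctateK$, and the forthcoming \cite{nineauthorsiv} for $\karLthy$. For $\Lthy$ you use the same citation. For the Tate invariants you unpack the argument behind the cited example: the $K$-theory of a metabolic category is induced, so its Tate construction vanishes. For $\karLthy$ you take a genuinely different route, evaluating the pullback square of \cref{prop:karoubi_invariant_L_theory_computation} at $\Met(\category{C},\qoppa)$. That deduction is valid as the proposition is stated, and it is more economical. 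The paper needs that square anyway, so bordism invariance of $\karLthy$ becomes a formal consequence rather than a second black box from \cite{nineauthorsiv}. Your route is, of course, only as independent as the proof of the square in that reference.

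One step needs correcting. The duality of $\qoppamet$ is not $(w\to x)\mapsto(\mathrm{D}x\to\mathrm{D}w)$. Computing the bilinear part of $\qoppamet(w\to x)=\fib(\qoppa(x)\to\qoppa(w))$ gives $\mathrm{D}_{\qoppamet}(w\to x)=(\mathrm{D}(x/w)\to\mathrm{D}x)$. So it does not interchange source and target, since the new target is $\mathrm{D}x$. On $\pi_*$, your (source, target) splitting carries the involution $(a,b)\mapsto(\bar b-\bar a,\bar b)$. What the duality does interchange, up to $\mathrm{D}$, is the source $w$ and the cofibre $x/w$. You should therefore split $\Ar(\category{C})$ via $(w\to x)\mapsto(w,x/w)$.

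You also need $\Kthy(\Met(\category{C},\qoppa))$ and $\ncKthy(\Met(\category{C},\qoppa))$ to be induced as $C_2$-spectra, not merely on homotopy groups. To get this, realise the splitting by a Poincar\'e functor, e.g.\ $\mathrm{Hyp}(\category{C})\to\Met(\category{C},\qoppa)$, $(a,b)\mapsto(a\to a\oplus\mathrm{D}b)$. By additivity this is an equivalence on $\Kthy$ and $\ncKthy$. The $K$-theory of the hyperbolic category is induced, so its Tate construction vanishes. With this fix your argument goes through.
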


\begin{proof}
	This is \cite[Cor. 4.4.6.]{nineauthorsii} for $\Lthy$, \cite[Ex. 3.3.6.(i)]{nineauthorsii} for $\tateK$ and $\nctateK$, and in the upcoming article \cite{nineauthorsiv} for $\karLthy$.
\end{proof}

\subsection{$L$-theory with transfers}

\label{subsec:L_thy_with_trf}

The category $\catp$ of Poincar\'e categories has small limits and colimits \cite[Prop. 6.1.4.]{nineauthors}. Given a space $X \in \spc$ and a Poincar\'e category $(\category{C},\qoppa)$, we write $(\category{C}.\qoppa)_X$ for the colimit of the constant $X$-shaped diagram with value $(\category{C},\qoppa)$. This construction is evidently covariantly functorial in $X$. In the following, we describe a related construction for \textit{pairs of spaces} and functoriality in \textit{finite sheeted covering maps}.

Given a pair of spaces $(X,A)$ and $(\category{C},\qoppa) \in \catp$, we define $(\category{C},\qoppa)_{(X,A)}$ as a pullback in $\catp$.
\begin{equation}
	\label{eq:relative_metabolic_construction}
	\begin{tikzcd}
		(\category{C},\qoppa)_{(X,A)} \ar[r] \ar[d] & \Met((\category{C},\Sigma \qoppa)_X) \ar[d, "\mathrm{met}"]\\
		(\category{C},\Sigma \qoppa)_A \ar[r] & (\category{C},\Sigma \qoppa)_X
	\end{tikzcd}
\end{equation}
Since the formation of Poincar\'e objects commutes with pullbacks, we may view Poincar\'e objects in $(\category{C},\qoppa)_{(X,A)}$ as a Poincar\'e object in $(\category{C},\Sigma \qoppa)_A$ together with a nullbordism of its pushforward to $(\category{C},\Sigma \qoppa)_X$. Intuitively, it resembles a class in the relative bordism group $\Omega_*(X,A)$.
We make the following observations.
\begin{enumerate}
	\item Since the right vertical morphism in the pullback \cref{eq:relative_metabolic_construction} is a split Poincar\'e-Verdier projection (\cite[Ex. 1.2.5.]{nineauthorsii}), the square is a split Poincar\'e-Verdier square (\cite[Rem. 1.6.2.iii]{nineauthorsii}).
	\item In particular, since $L$-theory is bordism invariant the square \cref{eq:relative_metabolic_construction} induces a fibre sequence
	\[ \Lthy((\category{C},\qoppa)_A) \rightarrow \Lthy((\category{C},\qoppa)_X) \rightarrow  \Lthy( (\category{C},\qoppa)_{(X,A)} ) . \]
	An analogous observation holds for the functors $\karLthy, \tateK$ and $\nctateK$.
	\item The construction is covariantly functorial in $(X,A) \in \spc^2$.
\end{enumerate}

In the following, we extend the functoriality of the construction \cref{eq:relative_metabolic_construction} to backwards-functoriality in finite covering maps. 
Write $\spc^2 \coloneqq \func(\Delta^1,\spc)$ for the category of pairs of spaces. A map $p \colon (Y,\partial Y) \rightarrow (X,\partial X)$ is a \textit{finite covering of pairs} if the square
\begin{equation}
	\begin{tikzcd}
		\partial Y \ar[r] \ar[d] & Y \ar[d]\\
		\partial X \ar[r] & X
	\end{tikzcd}
\end{equation}
is cartesian and both vertical maps are covering maps. Using the notation of \cref{sec:pairs_of_spaces}, we define the category of \textit{pairs of spaces with transfers} as the category of correspondences
\[ \spcpairtrans \coloneqq \Corr(\spc^2,\fincov, \all). \]
Objects in $\spcpairtrans$ are pairs of spaces, and morphisms  are those correspondences
\begin{equation}
	\label{eq:generic_span_with_finite_cover}
	\begin{tikzcd}
		& (E,\partial E) \ar[dl, "p"'] \ar[dr, "f"] &\\
		(X,\partial X) && (Y,\partial Y)
	\end{tikzcd}
	\in \map_{\Corr(\spc^2)}((X,\partial X),(Y,\partial Y))
\end{equation}
for which $p$ is a finite-sheeted covering map of pairs of spaces. Composition is defined by taking pullbacks of correspondences, which is well-defined since the class of finite coverings of pairs is stable under pullbacks.

\begin{constr}
	Let $\category{J} = (\ast \rightarrow \ast \leftarrow \ast)$ denote the free cospan . The category $\func(\category{J}, \catp)$ is semiadditive and cocomplete. Note that assigning to $(X,A) \in \spc^2$ the span $(\category{C},\Sigma \qoppa)_A \rightarrow (\category{C},\Sigma\qoppa)_X \leftarrow \Met((\category{C},\Sigma \qoppa)_X)$ is a colimit preserving functor in $(X,A)$. Hence, by \cref{prop:constructing_functors_by_describing_map} it uniquely extends to a functor $u \colon \spcpairtrans \rightarrow \func(\category{J},\catp)$. The functor
	\begin{equation}
		(\category{C},\qoppa)_{(-)} \colon \spcpairtrans \rightarrow \catp
	\end{equation}
	is the composite of the functor $u$ with the pullback functor $\func(\category{J},\catp) \rightarrow \catp$. Note that the restriction of this functor to $\spc^2$ recovers the functor written in \cref{eq:relative_metabolic_construction}.
\end{constr}

\subsection{$L$-theory for pairs of spaces and structure spectra}

\label{subsec:Lthy_for_pairs}

Next, we introduce the main example of a Poincar\'e category that we concern ourselves with. 

\begin{defn}
	Let $(X,A)$ be a pair of spaces. Its \textit{quadratic $L$-theory} is the spectrum
	\[ \quadL(X,A) = \Lthy((\module{\bbZ}^\finitemodule,\quadratic)_{(X,A)}). \]
	It defines a functor $\quadL \colon \spcpairtrans \rightarrow \spectra$. We abbreviate $\quadL(X) \coloneqq \quadL(X,\emptyset)$. Analogous constructions with $\karLthy, \tateK,\nctateK$ define functors $\quadkarL, \tateK,\nctateK \colon \spcpairtrans \rightarrow \spectra$.
\end{defn}

Note that, as discussed in \cref{subsec:L_thy_with_trf},  $\quadL(X,A)$ is the cofiber of the map $\quadL(A) \rightarrow \quadL(X)$ induced by the functoriality of $\quadL(-)$.  Further, for a discrete group $\Gamma$, $\quadL(B\Gamma) = \Lthy( \module{\bbZ \Gamma}^\finitemodule, \quadratic )$ \cite[p. 1489]{LandNikolausSchlichting23}. 

\begin{rmk}
	See \cite[Cor. 4.4.12]{nineauthors} for a discussion relating the present definition of quadratic $L$-theory to previously considered definitions. In \cite[Sec. 1.2]{nineauthorsiii} the authors derive a presentation of the homotopy groups of quadratic $L$-theory that is akin to Wall's original definition \cite{scm}. This is achieved by adapting Ranicki's algebraic surgery to the setting of Poincar\'e categories. Wall originally defined quadratic $L$-groups for a \textit{groupoid} instead of a space. From our perspective, this is explained by the following important property of quadratic $L$-theory.
\end{rmk}

\begin{prop}[The algebraic $\pi$--$\pi$-theorem, \cite{nineauthorsiii} Cor. 1.2.33.]
	\label{prop:algebraic_pi_pi_theorem}
	Suppose $f \colon A \rightarrow X$ is a $\pi$--$\pi$-equivalence. Then $\quadL(X,A) \simeq 0$.
\end{prop}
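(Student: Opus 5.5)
We want to show that $\quadL(X,A)=\Lthy((\module{\bbZ}^\finitemodule,\quadratic)_{(X,A)})$ vanishes when $f$ is a $\pi$--$\pi$-equivalence. By the fibre sequence from \cref{subsec:L_thy_with_trf}, this is equivalent to showing that $f_*\colon \quadL(A)\to\quadL(X)$ is an equivalence.

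The first step is to reduce to connected pieces and then to group rings. The colimit construction $(\category{C},\qoppa)_X$ takes disjoint unions to products of Poincar\'e categories, since coproducts and products agree in the semiadditive setting. $L$-theory preserves finite products, and $\pi_0 f$ is a bijection. Hence we may assume $A$ and $X$ are connected, at least when $X$ has finitely many components; the general case follows by passing to filtered colimits, which $\Lthy$ preserves.

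The second step is to identify the Poincar\'e categories. For connected $X$ with fundamental group $\Gamma$, the category $(\module{\bbZ}^\finitemodule,\quadratic)_X$ is the category of compact objects in $\func(X,\module{\bbZ})$ with the quadratic hermitian structure twisted by the local system. Objects of this category are retracts of finite colimits of representables $\bbZ[\Omega X]$. Its homotopy category of $\pi_0$-level data depends on $X$ only through... Strictly, though, it depends on all of $\Omega X$, not just on its $1$-truncation. So here is the real content: one must show that the map $\quadL(X)\to\quadL(B\pi_1X)=\Lthy(\module{\bbZ\pi_1X}^\finitemodule,\quadratic)$, induced by $X\to\tau_{\leq1}X$, is an equivalence. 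Once this is known for both $A$ and $X$, naturality gives the result, because $\tau_{\le1}f$ is an equivalence.

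The third step, which I expect to be the main obstacle, is proving that $\quadL$ inverts $1$-truncation. My approach is to use that the quadratic functor $\quadratic$ restricted along $\bbZ[\Omega X]\to\bbZ[\pi_1X]$ gives a Poincar\'e functor that is $1$-connective on underlying ring spectra. I would then invoke the general principle, in the spirit of Wall's $\pi$--$\pi$ theorem and Ranicki's algebraic surgery, that quadratic $L$-theory is insensitive to maps of connective ring spectra inducing an isomorphism on $\pi_0$. This principle holds because quadratic $L$-groups can be computed by algebraic surgery below the middle dimension, and surgery only sees $\pi_0$ of the coefficient ring.

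Concretely, I would check that the map is an equivalence on $\pi_0$-level forms and formations. This amounts to a Wall-type description of the $L$-groups via quadratic forms and formations over $\pi_0(\bbZ[\Omega X])=\bbZ\pi_1X$. Then I would show that the relative term $\quadL(X,A)$ has a presentation by relative algebraic surgery whose obstruction groups are computed over $\bbZ\pi_1$ of both spaces. Since these groups coincide, the relative groups vanish. The essential new input compared to the classical case is that the category is built from $\bbZ[\Omega X]$ rather than a discrete ring, so I would verify that algebraic surgery (killing homotopy classes by Lagrangians) works over connective ring spectra. This verification is exactly the content of the cited \cite{nineauthorsiii}.
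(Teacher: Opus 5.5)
Your proposal is correct and follows the paper's route. The paper gives no argument beyond citing \cite{nineauthorsiii}, and your reduction is exactly how that result yields the statement: you use the cofibre sequence $\quadL(A)\to\quadL(X)\to\quadL(X,A)$ and naturality along $X\to\tau_{\leq 1}X$ to reduce to the fact that quadratic $L$-theory of the connective ring spectrum $\bbZ[\Omega X]$ depends only on $\pi_0\bbZ[\Omega X]=\bbZ\pi_1X$.

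One small correction: the colimit $(\module{\bbZ}^\finitemodule,\quadratic)_X$ in $\catp$ consists of finite $\bbZ[\Omega X]$-modules without retracts, so it carries the free decoration rather than the projective one. This changes nothing, because $K_0$ of a connective ring spectrum agrees with $K_0$ of its $\pi_0$, so the decoration matches on both sides. The ``relative surgery'' detour in your last paragraph is not needed.
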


Our next goal is to construct the assembly maps in quadratic $L$-theory compatible with transfers, and a slight variant needed to define the structure spectrum of a pair of spaces. We start by a general construction.

\begin{constr}
	Let $E \in \spectra$ be a spectrum. Using \cref{prop:constructing_functors_by_describing_map} we can construct a functor
	\[ E \otimes (-) \colon \spcpairtrans \rightarrow \spectra \]
	by assigning the values
	\[ E\otimes (\ast,\emptyset) \coloneqq E \hspace{3mm} \text{and} \hspace{3mm} E \otimes(\ast,\ast) \coloneqq 0. \]
	In fact, by \cref{prop:functor_out_of_pairs_with_transfers_satisfying_codescent} a functor $F \in \func^\codesc(\spcpairtrans,\spectra)$ is of the form $E \otimes (-)$ if and only if $F(\ast,\ast) \simeq 0$, in which case $F \simeq F(\ast,\emptyset) \otimes (-)$. 
\end{constr}

\begin{defn}
	\label{defn:approximating_functor_from_left_by_excisive}
	Let $F \in \func(\spcpairtrans,\spectra)$ be a functor commuting with coproducts. Assume $F(\ast,\ast) \simeq 0$. Then we write
	\begin{equation}
		\asm_F \colon F(\ast) \otimes (-) \rightarrow F(-)
	\end{equation}
	for the adjunction counit of the adjunction \cref{prop:functor_out_of_pairs_with_transfers_satisfying_codescent}, evaluated on $F$. We call $\asm_F$ the \textit{assembly map with transfers} for the functor $F$. 
\end{defn}

\begin{constr}[Assembly maps]
	\label{constr:assembly}
	Observe that $\quadL(-)$ commutes with coproducts, i.e. the map
	\[ \quadL(X_0,A_0) \oplus \quadL(X_1,A_1) \rightarrow \quadL(X_0 \cup X_1,A_0 \cup A_1)  \]
	is an equivalence.
	The restriction of $\quadL(-)$ to $\spc^2$ does not commute with colimits, however. We have shown in \cref{prop:functor_out_of_pairs_with_transfers_satisfying_codescent} that the inclusion $\func^\codesc(\spcpairtrans,\spectra) \subset \func^\cup(\spcpairtrans,\spectra)$ admits a right adjoint. Hence, there is a universal natural transformation
	\[ \asm_{\quadL} \colon \quadL(\ast) \otimes (-) \rightarrow \quadL(-) \]
	whose source lies in $\func^\codesc(\spcpairtrans,\spectra) $. To justify notation, observe that the restriction of $\quadL$ to $\spc^2$ may be computed by left Kan extending the restriction of $\quadL(-)$ along $\Delta^1 \subset \spc^2 \rightarrow \spcpairtrans$ back to $\spc^2$. We compute
	\[ \quadL(\ast) \coloneqq \quadL(\emptyset \rightarrow *) \simeq \Lthy(\module{\bbZ}^{\finitemodule},\quadratic) \hspace{3mm} \text{and} \hspace{3mm} \quadL(* \rightarrow *) \simeq \Lthy(\Met(\module{\bbZ}^{\finitemodule},\quadratic)) \simeq 0. \]
	In general $\quadL(\ast) \otimes (X,A)$ is the cofibre of the map $\quadL(\ast) \otimes \Sigma^\infty_+ A \rightarrow \quadL(\ast) \otimes \Sigma^\infty_+ X$. Similarly $\quadkarL$ admits such an approximation from the left written $\asm_{\quadkarL} \colon \quadkarL(\ast) \otimes (-) \rightarrow \quadkarL(-)$. The map $\quadL(-) \rightarrow \quadkarL(-)$ induces a commuting square of functors
	\begin{equation}
		\begin{tikzcd}
			\quadL(\ast) \otimes (-) \ar[r, "\simeq"] \ar[d, "\asm_{\quadL}"] & \quadkarL(*) \otimes (-) \ar[d,"\asm_{\quadkarL}"]\\
			\quadL(-) \ar[r] & \quadkarL(-)
		\end{tikzcd}
	\end{equation}
	in which the upper map is an equivalence: indeed, this follows from the map
	\[ \Lthy(\module{\bbZ}^{\finitemodule},\quadratic) \rightarrow \karLthy(\module{\bbZ}^{\finitemodule},\quadratic) \]
	being an equivalence.
\end{constr}

We write $\tau_{\geq 1}(-)$ for the $1$-connective cover functor, viewed as an endofunctor of $\spectra$. Note that it comes with a map $\tau_{\geq 1} E \rightarrow E$, natural in $E \in \spectra$.

\begin{defn}
	Let $(Y,A)$ be a pair of spaces. The \textit{algebraic structure spectrum} of $(Y,A)$ is the spectrum
	\begin{equation}
		\structure(Y,A) \coloneqq \cofib \left( (\connquadL(\ast)) \otimes (Y,A) \rightarrow \quadL(\ast) \otimes (Y,A) \rightarrow \quadL(Y,A) \right).
	\end{equation}
\end{defn}
We warn the reader that only if $(X,\partial X)$ is an \textit{oriented} Poincar\'e duality pair, the spectrum $\structure(X,\partial X)$ has a relation to compact manifold pairs in the homotopy type $(X,\partial X)$. For non-orientable Poincar\'e duality pairs, one uses a twisted version \cite[Appendix A]{Ranicki92}. 
For computational reasons, we will also consider the following two variants. Define
\begin{align*}
	\structurekar(Y,A) \coloneqq \cofib( (\connquadkarL(\ast) \otimes (Y,A)) \rightarrow \quadkarL(Y,A) ) & \hspace{5mm} \text{the \textit{Karoubi structure spectrum;}}\\
	\structurekarper(X,A) \coloneqq \cofib(\quadkarL(\ast) \otimes (Y,A) \rightarrow \quadkarL(Y,A)) & \hspace{5mm}  \text{ the \textit{periodic Karoubi structure spectrum.} }
\end{align*}
All three variants $\structure(-), \structurekar(-), \structurekarper(-)$ are functorial in $\spcpairtrans$.

\begin{prop}
	\label{prop:karoubi_invariance_for_structure_spectra}
	The fibres of the maps $\structure(X,A) \rightarrow \structurekar(X,A)$ and $\tateK(X,A) \rightarrow \nctateK(X,A)$  are equivalent, naturally in $(X,A) \in \spcpairtrans$.
\end{prop}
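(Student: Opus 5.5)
The plan is to compare the two cofibre sequences defining $\structure$ and $\structurekar$, and then identify the fibre of the comparison with the fibre of $\tateK \to \nctateK$ using \cref{prop:karoubi_invariant_L_theory_computation}. Throughout, everything will be done as natural transformations of functors $\spcpairtrans \to \spectra$, so naturality comes for free.

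The first step is to write down the natural map of cofibre sequences
\begin{equation*}
\begin{tikzcd}
\connquadL(\ast) \otimes (-) \ar[r] \ar[d] & \quadL(-) \ar[r] \ar[d] & \structure(-) \ar[d] \\
\connquadkarL(\ast) \otimes (-) \ar[r] & \quadkarL(-) \ar[r] & \structurekar(-)
\end{tikzcd}
\end{equation*}
Here the left square comes from \cref{constr:assembly}: the map $\quadL \to \quadkarL$ is compatible with the assembly maps $\asm_{\quadL}$ and $\asm_{\quadkarL}$, since these are counits of the same adjunction and the counit is natural in $F$. We precompose with $\tau_{\geq 1}$, which is natural in the spectrum. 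The left vertical map is an equivalence, because $\Lthy(\module{\bbZ}^\finitemodule,\quadratic) \to \karLthy(\module{\bbZ}^\finitemodule,\quadratic)$ is an equivalence, as stated in \cref{constr:assembly}, and hence so is its $1$-connective cover. Taking fibres of the vertical maps row-wise then gives a fibre sequence $0 \to \fib(\quadL \to \quadkarL) \to \fib(\structure \to \structurekar)$, so the fibre of $\structure(-) \to \structurekar(-)$ is naturally equivalent to the fibre of $\quadL(-) \to \quadkarL(-)$.

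The second step identifies that fibre using \cref{prop:karoubi_invariant_L_theory_computation}. Evaluated at the Poincaré category $(\module{\bbZ}^\finitemodule,\quadratic)_{(X,A)}$, the square relating $\Lthy, \karLthy, \tateK, \nctateK$ is a pullback. In a pullback of spectra the fibres of parallel vertical maps agree, so $\fib(\quadL(X,A) \to \quadkarL(X,A)) \simeq \fib(\tateK(X,A) \to \nctateK(X,A))$, where the right side uses the functors on pairs defined via the same Poincaré category.

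The point requiring care is naturality in $\spcpairtrans$, including transfers, and not merely in $\spc^2$. This holds because \cref{prop:karoubi_invariant_L_theory_computation} gives a commuting square of natural transformations of functors on $\catp$. Whiskering it with $(\module{\bbZ}^\finitemodule,\quadratic)_{(-)} \colon \spcpairtrans \to \catp$ yields a pullback square in $\func(\spcpairtrans,\spectra)$, because limits there are computed pointwise. Taking fibres is functorial, so the equivalences above are natural. One also has to check that the map $\quadL \to \quadkarL$ used in the first step is this whiskered transformation, which holds by definition. I expect no real obstacle beyond this bookkeeping.
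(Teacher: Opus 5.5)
Your proposal is correct and follows essentially the same route as the paper. The paper observes that the square comparing $\quadL(X,A) \to \structure(X,A)$ with $\quadkarL(X,A) \to \structurekar(X,A)$ is cartesian, because the induced map $\connquadL(\ast)\otimes(X,A) \to \connquadkarL(\ast)\otimes(X,A)$ is an equivalence. It then combines this with the pullback square of \cref{prop:karoubi_invariant_L_theory_computation} to obtain a natural span of equivalent fibres, which is exactly your two-step identification.
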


\begin{proof}
	Note that from the definition of $\structure(-)$ as a cofibre it comes with a natural transformaion from $\quadL(-)$. The two squares in the diagram
	\begin{equation}
		\begin{tikzcd}
			\structure(X,A)  \ar[d] & \quadL(X,A) \ar[r] \ar[l] \ar[d] & \tateK(X,A) \ar[d]\\
			\structurekar(X,A)  & \quadkarL(X,A) \ar[r] \ar[l]& \nctateK(X,A)
		\end{tikzcd}
	\end{equation}
	induce a functorial span between the desired fibres. We argue that this span is in fact an equivalence. To see this, we have to argue that the two squares are cartesian. The right square is cartesian by \cref{prop:karoubi_invariant_L_theory_computation}. The map induced on horizontal fibres by the left square is the map $(\connquadL(\ast)) \otimes (X,A) \rightarrow (\connquadkarL(\ast))\otimes(X,A)$, which is an equivalence as observed in \cref{constr:assembly}.
\end{proof}

Our next objective is the study of the assembly map for the functor $\structure(-) \colon \spcpairtrans \rightarrow \spectra$. From the definitions, we directly compute
\begin{equation}
	 \structure(\ast)  \simeq \coconnquadL(\ast) \hspace{3mm} \text{and} \hspace{3mm} \structure(\ast,\ast) \simeq 0.
\end{equation}
Observe that $\structure(-)$ commutes with coproducts.
Hence, we may apply \cref{defn:approximating_functor_from_left_by_excisive} to $\structure(-) \colon \spcpairtrans \rightarrow \spectra$ to get a map which we call the \textit{structure spectrum assembly map} 
\begin{equation}
	\label{eq:structure_spectrum_assembly_map}
	(\coconnquadL(\ast)) \otimes (X,A) \rightarrow \structure(X,A).
\end{equation}

\begin{lem}
	\label{lem:when_structure_spectra_are_truncated_L_theory}
	The structure spectrum assembly map $(\coconnquadL(\ast)) \otimes (X,A) \rightarrow \structure(X,A)$ is an equivalence provided $(X,A)$ satisfies the following two conditions.
	\begin{enumerate}
		\item The spectrum $\structurekarper(X,A)$ vanishes.
		\item The map $\quadL(X,A) \rightarrow \quadkarL(X,A)$ is an equivalence.
	\end{enumerate}
\end{lem}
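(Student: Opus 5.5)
Condition (2) together with the definitions reduces the claim to a statement about the Karoubi variants. By construction the structure spectrum assembly map \cref{eq:structure_spectrum_assembly_map} is the map on cofibres induced by the commuting square
\begin{equation*}
	\begin{tikzcd}
		\connquadL(\ast) \otimes (X,A) \ar[r] \ar[d] & \quadL(\ast) \otimes (X,A) \ar[d] \\
		\connquadL(\ast) \otimes (X,A) \ar[r] & \quadL(X,A),
	\end{tikzcd}
\end{equation*}
whose right vertical map is $\asm_{\quadL}$. I will first check that this description agrees with the counit of \cref{defn:approximating_functor_from_left_by_excisive}. The source $\coconnquadL(\ast)\otimes(-)$ has codescent, and the map is the identity on the point $(\ast,\emptyset)$, since $\asm_{\quadL}$ is an equivalence there. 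So by the universal property of \cref{prop:functor_out_of_pairs_with_transfers_satisfying_codescent} this map is the counit.

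Taking horizontal cofibres, the assembly map for $\structure$ is an equivalence exactly when $\asm_{\quadL}$ is an equivalence at $(X,A)$. Indeed, the left columns agree, so the induced map on cofibres is identified with the map $\quadL(\ast)\otimes(X,A)\to\quadL(X,A)$ after cofibering out the same object. Equivalently, its cofibre is $\cofib(\asm_{\quadL})(X,A)$.

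Next, use the commuting square of \cref{constr:assembly}. Its top map $\quadL(\ast)\otimes(X,A)\to \quadkarL(\ast)\otimes(X,A)$ is an equivalence, and its bottom map $\quadL(X,A)\to\quadkarL(X,A)$ is an equivalence by hypothesis (2). Hence $\asm_{\quadL}$ at $(X,A)$ is an equivalence if and only if $\asm_{\quadkarL}$ at $(X,A)$ is one. But $\cofib(\asm_{\quadkarL})(X,A)$ is by definition $\structurekarper(X,A)$, which vanishes by hypothesis (1). This completes the argument.

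The only point needing care is the first step: identifying the abstractly defined counit for $\structure$ with the map on cofibres built from $\asm_{\quadL}$, including naturality in $\spcpairtrans$. I would settle it using the uniqueness part of the adjunction: a natural transformation from a codescent functor that is an equivalence on $(\ast,\emptyset)$, with both sides vanishing on $(\ast,\ast)$, factors the counit through an equivalence. Everything else is formal manipulation of cofibre sequences.
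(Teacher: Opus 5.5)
Your proof is correct, but it is organised differently from the paper's.

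\textbf{The paper's route.} The paper never identifies the structure spectrum assembly map $a$ with a map on cofibres. Instead it places $\structure(X,A)$ over $\structurekarper(X,A)$, using the vertical fibre sequences $\quadL(X,A)\to\structure(X,A)\to\Sigma\connquadL(\ast)\otimes(X,A)$ and $\quadkarL(X,A)\to\structurekarper(X,A)\to\Sigma\quadL(\ast)\otimes(X,A)$. It then claims that the dashed arrow (namely $a$ followed by $\structure(X,A)\to\Sigma\connquadL(\ast)\otimes(X,A)$) and $c\colon\Sigma\connquadL(\ast)\otimes(-)\to\Sigma\quadL(\ast)\otimes(-)$ refine to a fibre sequence in $\func^\codesc(\spcpairtrans,\spectra)$, checking this only at $(\ast,\emptyset)$ and $(\ast,\ast)$. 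Finally it uses (2) to make the lower square cartesian and (1) to identify $\structure(X,A)$ with $\fib(c)\simeq\coconnquadL(\ast)\otimes(X,A)$.

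\textbf{Your route.} You invoke the universal property only once: to identify $a$ with the map on horizontal cofibres induced by $\asm_{\quadL}$. More precisely, your transformation factors through the counit via a map of codescent functors that is an equivalence at both generators; your closing sentence phrases this backwards. After that everything is pointwise. You get $\cofib(a)\simeq\cofib(\asm_{\quadL})(X,A)$, and the square of \cref{constr:assembly} together with (2) identifies this with $\cofib(\asm_{\quadkarL})(X,A)=\structurekarper(X,A)$.

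\textbf{Comparison.} Your route avoids having to produce, as a natural transformation of codescent functors, the nullhomotopy of $c$ composed with the dashed arrow. The paper's ``refine to a fibre sequence'' step leaves that nullhomotopy implicit. Your route also gives a slightly sharper statement: the structure assembly map is an equivalence at $(X,A)$ if and only if $\asm_{\quadL}$ is. Hence, under (2), condition (1) is also necessary. The paper's diagram has the advantage of keeping the comparison $\structure\to\structurekarper$ in view, matching \cref{prop:karoubi_invariance_for_structure_spectra}.
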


\begin{proof}
	Using the identification $\quadL(\ast) \simeq \quadkarL(\ast)$ we may draw the following commuting diagram.
	\begin{equation}
		\begin{tikzcd}
			& \quadL(X,A) \ar[r, "b"] \ar[d] & \quadkarL(X,A) \ar[d]\\
			\coconnquadL(\ast) \otimes (X,A) \ar[r, "a"] \ar[dr, dashed] & \structure(X,A) \ar[r] \ar[d] & \structurekarper(X,A) \ar[d]\\
			& \Sigma  (\connquadL (\ast)) \otimes (X,A) \ar[r, "c"] & \Sigma \quadL(\ast) \otimes (X,A)
		\end{tikzcd}
	\end{equation}
	We claim that the dashed arrow and the arrow labelled $c$ naturally refine to a fibre sequence in $\func^\codesc(\spcpairtrans,\spectra) \simeq \spectra^{\Delta^1}$. At $(\ast,\ast)$ the sequence evaluates to zero. At $(\ast,\emptyset)$, the morphism labelled $b$ becomes an equivalence, hence the lower square is cartesian. But further, $\structurekarper(\ast,\emptyset) \simeq 0$ and the morphism labelled $a$ becomes an equivalence by construction. This refines the lower composite to a fibre sequence, as claimed.
	
	Under the conditions of the lemma, the morphism labelled $b$ is an equivalence, hence the lower square cartesian, and further $\structurekarper(X,A) = 0$, so that $\structure(X,A)$ identifies with the fibre of the morphism labelled $c$, so indeed the morphism labelled $a$ is an equivalence.
\end{proof}

\subsection{Total surgery obstructions}

\label{subsec:recollect_tso}

In this section, we review Ranicki's \textit{total surgery obstruction}, and explain how it is used in this article about the Nielsen realisation problem.
Let $(X,\partial X)$ be an oriented Poincar\'e pair. We assume that it is \textit{finite}, i.e. that both $X$ and $\partial X$ have the homotopy type of finite CW complexes.
The total surgery obstruction, specialised to finite oriented Poincar\'e pairs $(X,\partial X)$ is an element
\begin{equation}
	\tso(X,\partial X) \in \pi_d \structure(X,\partial X).
\end{equation}
It vanishes when there is an equivalence of pairs $(X,\partial X) \simeq (M,\partial M)$ to a compact manifold with boundary. The converse is true if the dimension of $(X,\partial X)$ is at least 6. 

\begin{thm}[\cite{Ranicki92}, p. 210]
	\label{thm:Ranicki_tso}
	If $\dim(X,\partial X) \geq 6$, and $\tso(X,\partial X) = 0 \in \pi_d \structure(X,\partial X)$, then there exists a compact manifold pair $(M,\partial M)$ with a homotopy equivalence 
	\[ (M,\partial M) \xrightarrow{\simeq} (X,\partial X). \]
\end{thm}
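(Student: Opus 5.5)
The statement is Ranicki's theorem, and the plan is to derive it from the classical surgery exact sequence for pairs, read through the fibre sequence that defines $\structure(X,\partial X)$. By definition of $\structure$ as a cofibre there is a long exact sequence
\[ \pi_d\big(\quadL(\ast)\otimes(X,\partial X)\big) \to \pi_d\quadL(X,\partial X) \to \pi_d\structure(X,\partial X) \xrightarrow{\partial} \pi_{d-1}\big(\connquadL(\ast)\otimes(X,\partial X)\big). \]
The argument has two steps, in this order.

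\textbf{Step 1: the boundary map gives a topological reduction.} The image $\partial\,\tso(X,\partial X)$ is, by construction of the total surgery obstruction, the obstruction to lifting the Spivak normal fibration of the finite Poincar\'e pair along $B\mathrm{TOP}\to BG$. The relevant identification is that $\connquadL(\ast)$ is the spectrum whose infinite loop space is $G/\mathrm{TOP}$. Orientedness of $(X,\partial X)$ gives the $\connquadL(\ast)$-orientation that makes the Poincar\'e-duality identification of $\pi_{d-1}(\connquadL(\ast)\otimes(X,\partial X))$ with $[X,B(G/\mathrm{TOP})]$-type data valid. The vanishing of $\partial\,\tso$ therefore yields a reduction. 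Pontryagin--Thom transversality (topological transversality, valid in all dimensions relevant here) then produces a degree one normal map of pairs
\[ (f,\partial f)\colon (M,\partial M)\to (X,\partial X). \]
The normal invariants of such maps form a torsor under $\pi_d(\connquadL(\ast)\otimes(X,\partial X))\cong [X,G/\mathrm{TOP}]$.

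\textbf{Step 2: surgery relative to the boundary.} I would identify the composite $\pi_d(\connquadL(\ast)\otimes(X,\partial X))\to\pi_d\quadL(X,\partial X)$ with Wall's surgery obstruction map for pairs. Here $\pi_d\quadL(X,\partial X)$ is the relative group $L_d(\pi_1\partial X\to\pi_1X)$, by the cofibre sequence for $\quadL$ of pairs. Exactness at $\pi_d\structure$ together with $\tso=0$ then says that the obstruction $\sigma(f,\partial f)$ of the normal map from Step 1 lies in the image of the assembly map from $\pi_d(\quadL(\ast)\otimes(X,\partial X))$. The point is that $\tso$ is defined as the class of $\sigma(f,\partial f)$ modulo exactly this image, independently of the choice of reduction. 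After changing the normal invariant by an element of $\pi_d(\connquadL(\ast)\otimes(X,\partial X))$, we may therefore assume $\sigma(f,\partial f)=0$. A mild subtlety is the difference between $\tau_{\ge1}$ and the full $\quadL(\ast)$ in degree $0$; this is why the argument uses the $1$-connective cover, and it is handled by the usual $\mathbb Z$-component of $G/\mathrm{TOP}$ versus $\mathbb Z\times G/\mathrm{TOP}$ bookkeeping. Wall's relative surgery theorem (Wall, Ch.~3) then applies: in dimension $d\ge6$, so that $\partial M$ has dimension at least $5$, we first surger $\partial f$ to a homotopy equivalence using the $\pi$--$\pi$ style relative obstruction, and then surger the interior rel boundary. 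The result is the desired homotopy equivalence of pairs.

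\textbf{Main obstacle.} The hard part is the identification used in both steps: that the algebraically defined $\tso\in\pi_d\structure(X,\partial X)$, built in the Poincar\'e-category framework of this paper, agrees with Ranicki's geometric obstruction. Concretely, the assembly map $\asm_{\quadL}$ with transfers restricted to $\spc^2$ must match Ranicki's assembly, and $\partial$ must match the Spivak reduction obstruction. I would handle this by citing the comparison of \cite{nineauthors,nineauthorsiii} with Ranicki's $L$-theory together with Ranicki's own proof \cite[p.~210]{Ranicki92}, rather than reproving it. In the paper this theorem is in any case taken as a black box from \cite{Ranicki92}.
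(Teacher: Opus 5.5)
The paper gives no argument for this statement and just cites \cite{Ranicki92}. Your outline is the standard proof behind that citation: the boundary of $\tso$ detects a topological reduction of the Spivak fibration, you then re-choose the normal invariant to kill the surgery obstruction, and you finish with Wall's surgery for pairs. The identifications you defer to Ranicki are indeed where the content lies. As written, however, the key deduction in Step~2 fails.

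With the paper's definition, $\structure(X,\partial X)$ is the cofibre of $\connquadL(\ast)\otimes(X,\partial X)\to\quadL(X,\partial X)$. So the long exact sequence begins with $\pi_d(\connquadL(\ast)\otimes(X,\partial X))$, not $\pi_d(\quadL(\ast)\otimes(X,\partial X))$. Also, the exactness you need is at $\pi_d\quadL(X,\partial X)$, not at $\pi_d\structure(X,\partial X)$.

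What you actually derive is that $\sigma(f,\partial f)$ lies in the image of the \emph{periodic} assembly. That does not let you kill $\sigma(f,\partial f)$ by re-choosing the normal invariant. Normal invariants only move $\sigma(f,\partial f)$ within its coset modulo the image of the \emph{connective} assembly. The gap between the two images is a subquotient of $\pi_d(\coconnquadL(\ast)\otimes(X,\partial X))\cong H_d(X,\partial X;\bbZ)$, which is where Quinn's resolution obstruction lives. The periodic vanishing statement is the one characterising homotopy types of compact ANR homology manifolds (Bryant--Ferry--Mio--Weinberger), not of manifolds. So this is not ``$\bbZ$-bookkeeping''; it is exactly the difference between the two theorems.

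The repair is to use the correct sequence. Once a reduction exists, $\tso(X,\partial X)$ is the image of $\sigma(f,\partial f)$ up to sign. Hence $\tso(X,\partial X)=0$ places $\sigma(f,\partial f)$ in the image of $\pi_d(\connquadL(\ast)\otimes(X,\partial X))$. The surgery obstruction is affine on $[X,G/\mathrm{TOP}]$ for the Quinn--Ranicki (not the Whitney sum) $H$-space structure. So some normal invariant has vanishing obstruction.

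There are two smaller corrections.

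First, ordinary orientedness of $(X,\partial X)$ does not provide an $\mathbb{L}^\bullet$-orientation of the Spivak fibration. By Ranicki, an $\mathbb{L}^\bullet$-orientation lifting the $\bbZ$-orientation exists precisely when a topological reduction does. Hence the duality $[X,G/\mathrm{TOP}]\cong\pi_d(\connquadL(\ast)\otimes(X,\partial X))$ is available only after Step~1, via the chosen reduction. It cannot be what identifies $\partial\,\tso(X,\partial X)$ with the reduction obstruction in Step~1. Ranicki proves that identification by a different argument, so cite it rather than derive it from duality.

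Second, ``first make $\partial f$ a homotopy equivalence, then surger rel boundary'' omits a step. After the first stage, the rel-boundary obstruction in $L_d(\pi_1X)$ need not vanish; it only comes from $L_d(\pi_1\partial X)$. It has to be removed by Wall realisation on a collar of the boundary. Citing Wall's surgery theorem for pairs \cite{scm} covers this.
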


\begin{rmk}[Compatibility with Poincar\'e bordism, cf. \cite{Ranicki92} Sec. 19 and Prop. 21.2.]
	\label{rmk:compatibility_with_pd_bordism}
	We briefly remark on the compatibility of the total surgery obstruction with Poincar\'e bordism.
	First, we describe a functor $\pdbord_*(-)$ from the category $\spcpairtrans$ to the category of graded abelian groups.
	\begin{enumerate}
		\item The group $\pdbord_d(X,A)$ is generated by maps $(Y,\partial Y) \rightarrow (X,A)$, where $(Y,\partial Y)$ is an oriented $d$-dimensional Poincar\'e pair. Such a generator is called a \textit{singular oriented Poincar\'e pair} in $(X,A)$. Two classes $(Y_i,\partial Y_i) \rightarrow (X,A)$ are declared equivalent provided there is a singular oriented Poincar\'e bordism of pairs between them.
		\item Given a map of pairs $f \colon (X,A) \rightarrow (X',A')$, postcomposing singular Poincar\'e pairs in $(X,A)$ with $f$ induces a map $\pdbord_*(X,A) \rightarrow \pdbord_*(X',A')$.
		\item Given a finite covering of pairs $p \colon (\overline{X},\overline{A}) \rightarrow (X,A)$, pulling back singular pairs in $(X,A)$ along $p$ induces a map $\pdbord_*(X,A) \rightarrow \pdbord_*(\overline{X},\overline{A})$.
	\end{enumerate}
	The spectrum $\structure(X,\partial X)$ is constructed so that there is a map of graded abelian groups 
	\begin{equation}
		\label{eq:pd_bordism_to_structure_spectrum}
		\pdbord_*(X,A) \rightarrow \pi_* \structure(X,A)
	\end{equation}
	natural in $\spcpairtrans$. If $(X,\partial X)$ is an oriented $d$-dimensional Poincar\'e pair, the identity defines a class in $\pdbord_d(X,\partial X)$, whose image under the map \cref{eq:pd_bordism_to_structure_spectrum} is the element $\tso(X,\partial X)$. 
\end{rmk}

The total surgery obstruction is well-behaved with respect to finite coverings: given a finite covering of finite Poincar\'e pairs $p \colon (Y,\partial Y) \rightarrow (X,\partial X)$, the induced map
\begin{equation}
	p^* \colon \pi_d \structure(X,\partial X) \rightarrow \pi_d \structure(Y,\partial Y)
\end{equation}
sends $\tso(X,\partial X)$ to $\tso(Y,\partial Y)$. This is a consequence of \cref{rmk:compatibility_with_pd_bordism}, see also \cite[Thm. 6.3.]{dl5}.

\section{Structure spectra of fixed-point complements}

Throughout this section, we place ourselves in the situation of \cref{sec:setup} and assume given a finite good decomposition of the $G$-space $\nielsspace = \pi \backslash E_\finite \Gamma$.
To fix notation, we write the corresponding pushout as
\begin{equation}
	\label{eq:good_decomposition}
	\begin{tikzcd}
		\partial C \ar[r] \ar[d] & C \ar[d] \\
		\nielsspace^{G} \ar[r] & \nielsspace
	\end{tikzcd}
\end{equation}
and write $(Q,\partial Q) \coloneqq G \backslash (C,\partial C)$ for the complement quotient. 
The ultimate goal of this section is to prove that under mild assumptions, the transfer map
\begin{equation}
	\pi_d \structure(Q,\partial Q) \rightarrow \pi_d\structure (C^{e},\partial C^{e})
\end{equation}
is injective. In light of \cref{subsec:recollect_tso}, this is useful to show that $\tso(Q,\partial Q)$ vanishes, reducing it to showing that $\tso(C^{e},\partial C^{e}) = 0$. To achieve this, we will actually fully compute the spectrum $\structure(Q,\partial Q)$ along the way. In fact, we show that the structure spectrum assembly map
\begin{equation}
	(\coconnquadL(\ast)) \otimes (Q,\partial Q) \xrightarrow{\simeq} \structure(Q,\partial Q)
\end{equation}
is an equivalence, by verifying the conditions of \cref{lem:when_structure_spectra_are_truncated_L_theory}.
The injectivity of the transfer map then proceeds by a careful comparison with the transfer in integral homology, see \cref{thm:transfer_injectivity}.

\subsection{Outline and first observations}

To show that the structure spectrum assembly map $(\coconnquadL(\ast)) \otimes (Q,\partial Q) \xrightarrow{\simeq} \structure(Q,\partial Q)$ is an equivalence, we have to verify two conditions from \cref{lem:when_structure_spectra_are_truncated_L_theory}.
\begin{enumerate}
	\item[(a)] The spectrum $\structurekarper(Q,\partial Q)$ vanishes.
	\item[(b)] The map $\quadL(Q,\partial Q) \rightarrow \quadkarL(Q,\partial Q)$ is an equivalence.
\end{enumerate}
Using \cref{prop:karoubi_invariant_L_theory_computation}, we may reformulate (b) as the statement that the map $\tateK(Q,\partial Q) \rightarrow \nctateK(Q,\partial Q)$ is an equivalence.  Both (a) and (b) are justified using the Farrell--Jones conjecture in algebraic $\ncKthy$- and $\Lthy$-theory. 

 Recall that we write $\orbifoldfundamentalgroup \coloneqq \pi_1((\nielsspace)_{hG})$. Write $\restrict_G^\orbifoldfundamentalgroup \colon \spc_G \rightarrow \spc_\orbifoldfundamentalgroup$ for the restriction functor along the quotient map $\orbifoldfundamentalgroup \rightarrow G$. Note that $\restrict_G^\orbifoldfundamentalgroup$ is right adjoint to the quotient functor $\pi \backslash (-) \colon \spc_\orbifoldfundamentalgroup \rightarrow \spc_G$, so we in particular get an adjunction unit
\begin{equation}
	q \colon E_\finite \orbifoldfundamentalgroup \rightarrow \restrict_G^\orbifoldfundamentalgroup (\pi \backslash E_\finite \orbifoldfundamentalgroup) \simeq \restrict_G^\orbifoldfundamentalgroup \nielsspace.
\end{equation}
Base change along $q$ defines a functor $q^\ast \colon (\spc_{\orbifoldfundamentalgroup})_{/\restrict_G^\orbifoldfundamentalgroup \nielsspace} \rightarrow (\spc_\orbifoldfundamentalgroup)_{/E_\finite \orbifoldfundamentalgroup}$.
Consider the composite
\begin{equation}
	(\widehat{-}) \colon
	\spc_{G} \xrightarrow{\restrict_G^\orbifoldfundamentalgroup} (\spc_{\orbifoldfundamentalgroup})_{/\restrict_G^\orbifoldfundamentalgroup \nielsspace} \xrightarrow{q^{\ast}} (\spc_\orbifoldfundamentalgroup)_{/E_\finite \orbifoldfundamentalgroup}.
\end{equation}
Since both $\restrict_G^\orbifoldfundamentalgroup$ and $q^\ast$ are colimit preserving functors, also $(\widehat{-})$ preserves colimits.
Applying $(\widehat{-})$ to the square \cref{eq:good_decomposition} we obtain a pushout of $\orbifoldfundamentalgroup$-spaces
\begin{equation}
	\label{eq:good_decomposition_on_covers}
	\begin{tikzcd}
		\widehat{\partial C}  \ar[r] \ar[d] & \widehat{C} \ar[d] \\
		\widehat{\nielsspace^G} \ar[r] & E_\finite \orbifoldfundamentalgroup.
	\end{tikzcd}
\end{equation}
In the next section, we use \textit{Davis--L\"uck assembly} to reduce both (a) and (b) to computations with certain homology theories on the category of $\orbifoldfundamentalgroup$-spaces. For these computations, it is essential that we understand the lower horizontal map in \cref{eq:good_decomposition_on_covers}.
For this, we make the following definition.
\begin{defn}
	\label{def:full_subgroup}
	A subgroup $F \subset \orbifoldfundamentalgroup$ is \textit{full} if the composite $F \subset \orbifoldfundamentalgroup \rightarrow G$ is an isomorphism. We write $\maximal$ for a set of representatives of conjugacy classes of full finite subgroups of $\orbifoldfundamentalgroup$.
\end{defn}

\begin{lem}
	If $\nielsspace$ admits a good decomposition, then 
	\begin{equation}
		\widehat{\nielsspace^G} \simeq \coprod_{F \in \maximal} \induct_{N_\orbifoldfundamentalgroup F}^\orbifoldfundamentalgroup E_\finite N_\orbifoldfundamentalgroup F  \hspace{3mm} \text{and} \hspace{3mm} \nielsspace^G \simeq \coprod_{F \in \maximal} BW_\orbifoldfundamentalgroup F.
	\end{equation}
\end{lem}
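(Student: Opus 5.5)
We compute $\widehat{\nielsspace^G}$ directly from the definition of $(\widehat{-})$ and then pass to $\pi$-orbits. Since $G$ acts on $\nielsspace^G$ trivially, $\restrict_G^\orbifoldfundamentalgroup \nielsspace^G$ is a $\orbifoldfundamentalgroup$-space on which $\pi$ acts trivially. Its pullback along $q$ is the subspace of $E_\finite\orbifoldfundamentalgroup$ lying over $(\nielsspace)^G \subset \nielsspace = \pi\backslash E_\finite\orbifoldfundamentalgroup$.

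The first step is to identify this subspace pointwise. A point $x \in E_\finite \orbifoldfundamentalgroup$ maps into $(\pi\backslash E_\finite\orbifoldfundamentalgroup)^G$ exactly when, for each $g\in\orbifoldfundamentalgroup$, there is $n\in\pi$ with $gx = nx$. Equivalently, the isotropy group $\orbifoldfundamentalgroup_x$ surjects onto $G$. Since $\orbifoldfundamentalgroup_x$ is finite and $\pi$ is torsionfree, $\orbifoldfundamentalgroup_x\cap\pi = 1$, so $\orbifoldfundamentalgroup_x$ is a full subgroup. Hence
\[ \widehat{\nielsspace^G} \simeq \bigcup_{F \text{ full}} (E_\finite\orbifoldfundamentalgroup)^F. \]
This point-set description should be promoted to an equivalence of homotopy types. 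I would do this by working with fixed points in $\spc_\orbifoldfundamentalgroup$, using that $q^\ast$ is a pullback of $\orbifoldfundamentalgroup$-spaces and evaluating on each orbit $\orbifoldfundamentalgroup/H$ via $(-)^H$. One can also model everything by a $\orbifoldfundamentalgroup$-CW structure, where the statement becomes cellular.

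The second step, which I expect to be the main obstacle, is to show that the union is disjoint and to identify each piece. For a full $F$, the space $(E_\finite\orbifoldfundamentalgroup)^F$ is a model for $E_\finite N_\orbifoldfundamentalgroup F$: it is an $N_\orbifoldfundamentalgroup F$-space, and its $H$-fixed points are $(E_\finite\orbifoldfundamentalgroup)^{\langle F,H\rangle}$, which is contractible if $\langle F,H\rangle$ is finite and empty otherwise. For this to be the universal space of the family of finite subgroups of $N_\orbifoldfundamentalgroup F$, we need that a finite $H\subset N_\orbifoldfundamentalgroup F$ generates a finite group with $F$. That holds because $HF$ is a group containing the finite group $F$ as a normal subgroup. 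Disjointness requires $(E_\finite\orbifoldfundamentalgroup)^F\cap(E_\finite\orbifoldfundamentalgroup)^{F'}=\emptyset$ for distinct full $F,F'$, i.e. $\langle F,F'\rangle$ must be infinite. This is exactly where the good-decomposition hypothesis enters. If $\langle F,F'\rangle$ were finite, it would be a finite subgroup containing two distinct full subgroups, so its intersection with $\pi$ would be nontrivial, which is impossible since $\pi$ is torsionfree. Hence $\langle F,F'\rangle\cap\pi=1$, and a finite group containing a full subgroup and meeting $\pi$ trivially is itself full, forcing $F=\langle F,F'\rangle=F'$. So disjointness actually follows from torsionfreeness of $\pi$ alone; the hypothesis is then only needed for $\nielsspace^G$ to be a good-decomposition fixed set. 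I would double-check this at this step. Grouping by conjugacy classes gives
\[ \coprod_{F\text{ full}} (E_\finite\orbifoldfundamentalgroup)^F \cong \coprod_{F\in\maximal}\orbifoldfundamentalgroup\times_{N_\orbifoldfundamentalgroup F}(E_\finite\orbifoldfundamentalgroup)^F \simeq \coprod_{F\in\maximal}\induct_{N_\orbifoldfundamentalgroup F}^\orbifoldfundamentalgroup E_\finite N_\orbifoldfundamentalgroup F, \]
using that the stabiliser of the component indexed by $F$ is the normaliser $N_\orbifoldfundamentalgroup F$.

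Finally, $\nielsspace^G \simeq \pi\backslash\widehat{\nielsspace^G}$, since $q^\ast$ followed by $\pi$-orbits recovers the original space; the map $E_\finite\orbifoldfundamentalgroup\to\nielsspace$ is a $\pi$-quotient. Taking $\pi$-orbits of $\induct_{N_\orbifoldfundamentalgroup F}^\orbifoldfundamentalgroup E_\finite N_\orbifoldfundamentalgroup F$ gives $(\pi\cap N_\orbifoldfundamentalgroup F)\backslash\pi\backslash\orbifoldfundamentalgroup$-indexed copies, and $\pi\backslash\orbifoldfundamentalgroup\cong G$ is covered by $F$, so $\pi N_\orbifoldfundamentalgroup F=\orbifoldfundamentalgroup$. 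We therefore obtain a single copy of $(\pi\cap N_\orbifoldfundamentalgroup F)\backslash E_\finite N_\orbifoldfundamentalgroup F$. Because $\pi\cap N_\orbifoldfundamentalgroup F$ is torsionfree and maps isomorphically onto $W_\orbifoldfundamentalgroup F$ (as $N_\orbifoldfundamentalgroup F = F\times$-extension with $F\cap\pi=1$ and $\pi N F=\orbifoldfundamentalgroup$), it acts freely on the contractible space $E_\finite N_\orbifoldfundamentalgroup F$. The quotient is therefore $B(\pi\cap N_\orbifoldfundamentalgroup F)\simeq BW_\orbifoldfundamentalgroup F$.
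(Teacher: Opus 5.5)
Your argument is correct, and it takes a different route from the paper's. You compute $\widehat{\nielsspace^G}$ by hand as the preimage of $\nielsspace^G$ in a $\orbifoldfundamentalgroup$-CW model of $E_\finite\orbifoldfundamentalgroup$, namely the points with full isotropy. You then split it using that distinct full subgroups generate an infinite group, and identify each piece $(E_\finite\orbifoldfundamentalgroup)^F$ with $E_\finite N_\orbifoldfundamentalgroup F$. As you suspected, this never uses the good decomposition: both formulas hold for any extension of a finite group by a torsionfree group.

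Two points you only sketch are easy to fill in:
\begin{enumerate}
\item \emph{The strict preimage computes the base change $q^\ast$ in $\spc_\orbifoldfundamentalgroup$.} Take $H$ finite with image $\bar H \subset G$. By your disjointness argument, the preimage of $\nielsspace^{\bar H}$ is the disjoint union of the $(E_\finite\orbifoldfundamentalgroup)^{H'}$, over finite $H'$ mapping isomorphically onto $\bar H$. So $(E_\finite\orbifoldfundamentalgroup)^H\to\nielsspace^{\bar H}$ is the covering $E_\finite\orbifoldfundamentalgroup\to\nielsspace$ restricted to a clopen subset of a preimage. Hence it is a fibration, and the strict pullback on $H$-fixed points is a homotopy pullback.
\item \emph{$\pi\cap N_\orbifoldfundamentalgroup F\to W_\orbifoldfundamentalgroup F$ is surjective.} This holds simply because $F\to G$ is onto, so $N_\orbifoldfundamentalgroup F=(\pi\cap N_\orbifoldfundamentalgroup F)F$.
\end{enumerate}

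The paper instead uses the decomposition. The spaces $\widehat C$ and $\widehat{\partial C}$ are free, while $\widehat{\nielsspace^G}$ has only full isotropy because $G$ acts trivially on $\nielsspace^G$. So the pushout exhibits $E_\finite\orbifoldfundamentalgroup$ as built from free cells and cells of full isotropy, which is condition (2) of \cref{appendixlemma:characterisations_of_property_M}. It also gives $\widehat{\nielsspace^G}\simeq(E_\finite\orbifoldfundamentalgroup)^{>1}$. The formulas then follow from the cocartesian square in condition (4) there together with \cref{appendixlemma:computing_fixed_points}.

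Your route is more elementary and more general for the statement itself. The paper's route additionally establishes that every nontrivial finite subgroup of $\orbifoldfundamentalgroup$ lies in a unique full subgroup. The later arguments (the family $\normfam$, \cref{lem:pushout_for_normaliser_family}, \cref{thm:tateK_computation}) rely on exactly this, and your computation does not supply it.
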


\begin{proof}
	Since both $\widetilde{\partial C}$ and $\widehat{C}$ are built from free orbits, we see that $\widehat{\nielsspace^G} \simeq (E_\finite \orbifoldfundamentalgroup)^{>1}$. Note that since the $G$-action on $\nielsspace^G$ is trivial, $\widehat{\nielsspace^G}$ is built from cells with full isotropy groups. So \cref{appendixlemma:characterisations_of_property_M} and \cref{appendixlemma:computing_fixed_points} give the assertions.
\end{proof}

\subsection{Davis--L\"uck assembly and the Farrell--Jones conjecture}
\label{subsec:Davis_Lueck_assembly}

We want to prove two results about the complement quotient $(Q,\partial Q)$.
\begin{enumerate}
	\item The map $\tateK(Q,\partial Q) \rightarrow \nctateK(Q,\partial Q)$ is an equivalence.
	\item The periodic Karoubi structure spectrum $\structurekarper(Q,\partial Q)$ vanishes.
\end{enumerate}
To prove these assertions, we will employ the technique of \textit{Davis--L\"uck assembly}. This is a convenient framework to compute with Farrell--Jones assembly maps.

Let $\Gamma$ be a discrete group. Recall that the category of $\Gamma$-spaces $\spc_\Gamma = \PSh(\orbit(\Gamma))$ is freely generated under colimits by the orbit category $\orbit(\Gamma)$. That means that a colimit preserving functor $\spc_\Gamma \rightarrow \spectra$ is determined by its restriction to the orbit category $\orbit(\Gamma) \subset \spc_\Gamma$, and the inverse of this procedure is given by left Kan extension.

\begin{constr}[Davis--L\"uck equivariant homology theories, \cite{dl98}]
	\label{cons:equivariant_homology_theories}
	Consider a functor
	\begin{equation}
		\generichlgy \colon \Gpd \rightarrow \spectra
	\end{equation}
	from the category of (1--)groupoids to the category of spectra, which we will refer to as a \textit{spectral equivariant homology theory}. Given a discrete group $\Gamma$, we obtain a functor $\generichlgy^\Gamma \colon \spc_\Gamma \rightarrow \spectra$ by left Kan extending the composite
	\begin{equation}
		\orbit(\Gamma) \xrightarrow{(-)_{h\Gamma}} \Gpd \xrightarrow{\generichlgy} \spectra
	\end{equation}
	to $\spc_\Gamma$. We will refer to $\generichlgy^\Gamma$ as the \textit{spectral $\Gamma$-homology theory associated to $\generichlgy$}. Here, we have used that for a transitive $\Gamma$-set $S$, the homotopy orbits $S_{h\Gamma}$ are a groupoid. In fact, $(\Gamma/H)_{h\Gamma} \simeq BH$. Given a group homomorphism $\alpha \colon \Gamma \rightarrow \Gamma'$ we get an induced functor
	\begin{equation}
		\induct_\alpha \colon \orbit(\Gamma) \rightarrow \orbit(\Gamma').
	\end{equation}
	The left diagram below evidently commutes, providing a lax Beck-Chevalley square in the right diagram below.
	\begin{equation}
		\begin{tikzcd}
			\spc_\Gamma \ar[d] && \spc_{\Gamma'} \ar[d]  \ar[ll, "\restrict_\alpha"']& \spc_\Gamma \ar[drr, phantom, "\implies"]  \ar[d] \ar[rr, "\induct_\alpha"] && \spc_{\Gamma'} \ar[d]\\
			\spc^{B\Gamma}  && \spc^{B\Gamma'}  \ar[ll, "\restrict_\alpha"']& \spc^{B\Gamma} \ar[rr, "\induct_\alpha"] \ar[dr, "(-)_{h\Gamma}"'] && \spc^{B\Gamma'} \ar[dl, "(-)_{h\Gamma'}"]\\
			& \spc \ar[ur] \ar[ul] &&& \spc &
		\end{tikzcd}
	\end{equation}
	Restricting along $\orbit(\Gamma) \subset \spc_\Gamma$, we obtain a comparison map $w_\alpha \colon (-)_{h\Gamma} \rightarrow (\induct_\alpha(-))_{h\Gamma'}$. Left Kan extending to $\spc_\Gamma$ we get a comparison map
	\begin{equation}
		\generichlgy(\alpha) \colon \generichlgy^\Gamma \rightarrow \generichlgy^{\Gamma'} \circ \induct_\alpha.
	\end{equation}
	On the orbit $\Gamma/H$, the map $w_\alpha$ is the evident map $BH \rightarrow B\alpha(H)$, where $\alpha(H)$ is the image subgroup of $H$ under $\alpha$ in $\Gamma'$. Therefore, $w_\alpha$ is an equivalence on those orbits $\Gamma/H$ where $H$ intersects the kernel of $\alpha$ trivially. Hence, $\generichlgy(\alpha)$ is an equivalence on the category of all $\Gamma$-spaces generated under colimits by the orbits $\{ \Gamma/H \mid H \cap \ker(\alpha) = \{e\}\}$.
\end{constr}

\begin{example}
	\label{ex:spectral_equivariant_homology_theories_from_localising_invariants}
	For $E \in \{ \Kthy, \ncKthy, \tateK,\nctateK,\Lthy,\karLthy \}$ we have the functor
	\begin{equation}
		E \colon \spc \rightarrow \spectra, \hspace{3mm} X \mapsto E( (\module{\bbZ}^\finitemodule,\quadratic)_X )
	\end{equation}
	We may restrict $E$ along the inclusion functor $\Gpd \subset \spc$ to obtain a spectral equivariant homology theory as in \cref{cons:equivariant_homology_theories}. We write 
	\begin{equation}
		H^\Gamma(- ; E) \colon \spc_\Gamma \rightarrow \spectra
	\end{equation}
	for the associated spectral $\Gamma$-homology theory. Note for example, that $H^\Gamma(\Gamma/H;\Lthy) = \quadL(BH)$.
\end{example}

A use of Davis--L\"uck assembly is that certain assembly maps can be studied in terms of equivariant homology theories. This is in particular useful in the context of the Farrell--Jones conjecture. Recall that a \textit{family of subgroups} of a discrete group $\Gamma$ is a collection of subgroups closed under passing to subgroups and under conjugation. Let $\family$ be a family of subgroups of $\Gamma$. The \textit{universal $\Gamma$-space for $\family$} is the $\Gamma$-space $E_\family \Gamma$ with
\begin{equation}
	(E_\family \Gamma)^H \simeq \begin{cases}
		* \colon & \text{if $H\in \family$};\\
		\emptyset \colon & \text{otherwise}.
	\end{cases}
\end{equation}
Typical families we consider are the families $\finite$ and $\vcyc$ of finite and virtually cyclic subgroups. Given a family $\family$ we write $\orbit_\family(\Gamma)$ for the subcategory of $\orbit(\Gamma)$ spanned by those orbits having isotropy contained in $\family$.

\begin{lem}[Davis--L\"uck assembly]
	\label{lem:davis_lueck_assembly}
	Let $\Gamma$ be a discrete group, and $\generichlgy \colon \Gpd \rightarrow \spectra$ a functor. The two maps
	\begin{equation}
		\colim_{S \in \orbit_\family(\Gamma)} \generichlgy(S_{h\Gamma}) \rightarrow \generichlgy(B\Gamma) \hspace{5mm} \text{and} \hspace{5mm} \generichlgy^\Gamma(E_\family \Gamma) \rightarrow\generichlgy^\Gamma(*)
	\end{equation}
	are equivalent.
\end{lem}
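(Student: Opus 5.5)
The plan is to identify both maps as the maps induced on colimits by one and the same natural transformation of diagrams indexed by $\orbit_\family(\Gamma)$, and then appeal to the universal property of left Kan extension. By \cref{cons:equivariant_homology_theories}, $\generichlgy^\Gamma$ is the left Kan extension of $\generichlgy \circ (-)_{h\Gamma}$ along the Yoneda embedding $\orbit(\Gamma) \subset \spc_\Gamma = \PSh(\orbit(\Gamma))$. It is therefore colimit preserving, and its value on $\Gamma/H$ is $\generichlgy(BH)$.

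First I identify the source. The universal space $E_\family \Gamma$, viewed as a presheaf on $\orbit(\Gamma)$, sends $\Gamma/H$ to $(E_\family\Gamma)^H$. This is contractible for $H \in \family$ and empty otherwise, so it is the left Kan extension of the terminal presheaf on $\orbit_\family(\Gamma)$. Equivalently,
\[
E_\family \Gamma \simeq \colim_{S \in \orbit_\family(\Gamma)} S
\]
in $\spc_\Gamma$. This is the standard density formula: a presheaf $P$ is the colimit over its category of elements, and the category of elements of $E_\family\Gamma$ is equivalent to $\orbit_\family(\Gamma)$ because each value is contractible or empty. Applying the colimit preserving functor $\generichlgy^\Gamma$ gives
\[
\generichlgy^\Gamma(E_\family\Gamma) \simeq \colim_{S \in \orbit_\family(\Gamma)} \generichlgy(S_{h\Gamma}).
\]

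Next I identify the target. We have $\generichlgy^\Gamma(\ast) = \generichlgy^\Gamma(\Gamma/\Gamma) = \generichlgy(B\Gamma)$, since $\ast = \Gamma/\Gamma$ is an orbit and the left Kan extension along a fully faithful functor restricts back to the original functor.

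Finally I check that the maps agree. The map $E_\family\Gamma \to \ast$ is the colimit of the maps $S \to \Gamma/\Gamma = \ast$ over $S \in \orbit_\family(\Gamma)$. Applying $\generichlgy^\Gamma$, the component at $S$ is $\generichlgy$ applied to $S_{h\Gamma} \to \ast_{h\Gamma} = B\Gamma$. These components assemble into the canonical map from the colimit to $\generichlgy(B\Gamma)$, which is the first map in the statement. Here $\ast$ is itself an orbit, so $\generichlgy(B\Gamma)$ receives the cocone determined by functoriality on $\orbit(\Gamma)$.

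The only step requiring care, and the one I expect to be the main obstacle, is the coherence of these identifications. I need the equivalence $\generichlgy^\Gamma(\colim S) \simeq \colim \generichlgy^\Gamma(S)$ to be compatible with the cocones to $\ast$. This follows by applying the colimit preserving functor $\generichlgy^\Gamma$ to the colimit cocone in $\spc_\Gamma$ together with the map $E_\family\Gamma \to \ast$. Naturality of the comparison maps in the cocone then gives the equivalence of the two arrows as objects of $\Ar(\spectra)$.
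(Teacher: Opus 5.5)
Your proposal is correct. You write $E_\family\Gamma\simeq\colim_{S\in\orbit_\family(\Gamma)}S$ using the density formula (its category of elements is the full subcategory $\orbit_\family(\Gamma)$, because $\family$ is closed under subconjugation and so the presheaf $\Gamma/H\mapsto(E_\family\Gamma)^H$ is a sieve). You then apply the colimit-preserving functor $\generichlgy^\Gamma$, which restricts to $\generichlgy\circ(-)_{h\Gamma}$ on orbits, and use that $\ast=\Gamma/\Gamma$ is the terminal orbit; this identifies the two arrows. The paper gives no argument of its own and only cites \cite[Sec. 2]{BunkeKasprowskiWinges}, so your left-Kan-extension argument is the standard reasoning behind that citation and makes the lemma self-contained.
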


\begin{proof}
	See \cite[Sec. 2]{BunkeKasprowskiWinges}.
\end{proof}

Next, we recall the Farrell--Jones conjecture. After that, we briefly recall a large class of groups where it is known to hold. In the rest of the article, we often use it as an input for computations.

\begin{conj}[The Farrell--Jones conjecture, integer coefficients]
	\label{conj:farrell_jones}
	Let $\Gamma$ be a discrete group, and $\vcyc$ the family of virtually cyclic subgroups of $\Gamma$. Then the assembly maps
	\begin{equation}
		H^\Gamma(E_\vcyc \Gamma; \ncKthy) \rightarrow H^\Gamma(\ast;\ncKthy) \hspace{3mm} \text{and} \hspace{3mm} H^\Gamma(E_\vcyc \Gamma;\karLthy) \rightarrow H^\Gamma(\ast;\karLthy)
	\end{equation}
	are equivalences. 
\end{conj}

\begin{recollect}
	\label{recollect:Farrell_Jones_groups}
	Recall the notion of a Farrell--Jones group from \cite{lueckIC}. For every Farrell--Jones group $\Gamma$, \cref{conj:farrell_jones} is known to be true. Farrell--Jones groups are closed under passing to subgroups and passing to overgroups with finite index. Word-hyperbolic groups are Farrell--Jones groups, see \cite{BLR}. See \cite[Sec. 16.2.]{lueckIC} for a comprehensive survey of the current status and \cite{BunkeKasprowskiWinges} for a higher categorical approach in the case of $K$-theory. The $L$-theoretic analog for Poincaré categories with a group action, is work in progress by Christoph Winges \cite{ChristophFarrelJones}.
\end{recollect}

We need the following companion results on the Farrell--Jones conjecture which we adapt to our modernised setup.

\begin{rmk}[The transitivity principle]
	\label{rmk:transitivity_principle}
	Let $\generichlgy \colon \Gpd \rightarrow \spectra$ be a spectral equivariant homology theory, and $\Gamma$ a discrete group. Suppose given two families of subgroups $\family \subset \family'$ of $\Gamma$.
	For each $H \in \family'$, write $\family \cap H$ for the family of subgroups of $H$ which lie in $\family$. Assume that for each $H \in \family'$ the map
	\begin{equation}
		\label{eq:transitivity_principle_condition}
		\generichlgy^H(E_{\family \cap H} H) \rightarrow \generichlgy^H(\ast)
	\end{equation}
	is an equivalence. 
	Then, if $Z \in \spc_\Gamma$ has isotropy in the family $\family'$, also the map
	\begin{equation}
		\generichlgy^\Gamma(Z \times E_{\family} \Gamma) \rightarrow \generichlgy^\Gamma(Z)
	\end{equation}
	is an equivalence. So in particular for $Z=E_\family' \Gamma$ the map $\generichlgy^\Gamma(E_{\family}\Gamma) \rightarrow \generichlgy^\Gamma(E_{\family'}\Gamma)$ is an equivalence.
	The proof in \cite[Sec. 15.5]{lueckIC} carries over verbatim. 
%	We may reduce to the case of an orbit $Z=\Gamma/H$ for $H \in \family'$, and use the induction homomorphism to indentify the map $\generichlgy^\Gamma(\Gamma/H \times E_\family \Gamma) \rightarrow  \generichlgy(\Gamma/H)$ with the map \cref{eq:transitivity_principle_condition}. 
\end{rmk}

\begin{lem}
	\label{lem:Farrell_Jones_for_larger_families}
	If $\Gamma$ is a discrete group, and all subgroups of $\Gamma$ satisfy the Farrell--Jones conjecture \cref{conj:farrell_jones}, then if $\family$ is any family containing all virtually cyclic subgroups, the assembly maps
	\begin{equation}
		H^\Gamma(E_\family \Gamma; \ncKthy) \rightarrow H^\Gamma(\ast;\ncKthy) \hspace{3mm} \text{and} \hspace{3mm} H^\Gamma(E_\family \Gamma;\karLthy) \rightarrow H^\Gamma(\ast;\karLthy)
	\end{equation}
	are equivalences.
\end{lem}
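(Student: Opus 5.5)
The plan is to apply the transitivity principle (\cref{rmk:transitivity_principle}) to the pair of families $\vcyc \subset \family$. It is enough to show that $H^\Gamma(E_\vcyc \Gamma;E) \rightarrow H^\Gamma(E_\family \Gamma;E)$ is an equivalence for $E \in \{\ncKthy,\karLthy\}$. Indeed, the assembly map for $E_\vcyc\Gamma$ factors as
\begin{equation}
	H^\Gamma(E_\vcyc \Gamma;E) \rightarrow H^\Gamma(E_\family \Gamma;E) \rightarrow H^\Gamma(\ast;E),
\end{equation}
and the composite is an equivalence by \cref{conj:farrell_jones} for $\Gamma$ itself, which is a subgroup of $\Gamma$. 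Two-out-of-three then gives the claim for $E_\family\Gamma$.

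To get the first map to be an equivalence, the transitivity principle requires that for every $H \in \family$ the map $H^H(E_{\vcyc \cap H} H;E) \rightarrow H^H(\ast;E)$ is an equivalence. The family $\vcyc \cap H$ of subgroups of $H$ lying in $\vcyc$ is exactly the family of virtually cyclic subgroups of $H$, since being virtually cyclic is intrinsic to the subgroup. So this condition is precisely \cref{conj:farrell_jones} for the group $H$, which holds by hypothesis because all subgroups of $\Gamma$ satisfy the conjecture. Applying \cref{rmk:transitivity_principle} with $Z = E_\family\Gamma$, whose isotropy lies in $\family$, shows that $H^\Gamma(E_\vcyc\Gamma \times E_\family\Gamma;E) \rightarrow H^\Gamma(E_\family \Gamma;E)$ is an equivalence. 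Since $\vcyc \subset \family$, the product $E_\vcyc\Gamma\times E_\family\Gamma$ is a model for $E_\vcyc\Gamma$, so this is the desired map.

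The only point needing care is the identification of the $\Gamma$-homology theory restricted to $H$ with the $H$-homology theory $H^H(-;E)$. This identification is used implicitly inside the transitivity principle, through the induction structure: by \cref{cons:equivariant_homology_theories}, the comparison $\generichlgy(\alpha)$ for the injective map $\alpha\colon H \subset \Gamma$ is an equivalence on all $H$-spaces, because $\ker\alpha$ is trivial. That remark has been stated, with the proof of \cite[Sec. 15.5]{lueckIC} carrying over, so I expect no real obstacle. The argument is formal once the hypotheses are matched.
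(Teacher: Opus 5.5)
Your proposal is correct and follows the paper's proof: you apply the transitivity principle to $\vcyc \subset \family$, noting that $\vcyc \cap H$ is exactly the family of virtually cyclic subgroups of $H$, so the required hypothesis is the Farrell--Jones conjecture for subgroups, and then you combine this with the conjecture for $\Gamma$ itself by two-out-of-three. Your added check that $E_\vcyc\Gamma \times E_\family\Gamma$ is a model for $E_\vcyc\Gamma$ is precisely what the final sentence of the transitivity remark uses.
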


\begin{proof}
	If $H \subset \Gamma$ is a subgroup, then $\vcyc \cap H$ is the family of virtually cyclic subgroups of $H$. So the transitivity principle \cref{rmk:transitivity_principle} implies that the maps
	\begin{equation}
		H^\Gamma(E_\vcyc \Gamma; \ncKthy) \rightarrow H^\Gamma(E_\family \Gamma;\ncKthy) \hspace{3mm} \text{and} \hspace{3mm} H^\Gamma(E_\vcyc \Gamma;\karLthy) \rightarrow H^\Gamma(E_\family \Gamma ;\karLthy)
	\end{equation}
	are equivalences.  Combining this with the Farrell--Jones conjecture for $\Gamma$ gives the result.
\end{proof}

\begin{recollect}[Virtually cyclic subgroups of type I and II, \cite{lueckIC}, Lem. 13.42.]
	For the next lemma, we recall that virtually cyclic groups come in two varieties. A virtually cyclic group $V$ is of \textit{type I} if it is finite or admits a surjection $p \colon V \rightarrow \bbZ$ to the group of integers. If $V$ is not of type I, we say it is of type II. Every type II virtually cyclic subgroup has elements of even order. The $\Lthy$-theory of group rings on type II virtually cyclic subgroups is quite complicated, see \cite{ConnollyDavis04}. For type I virtually cyclic subgroups, the situation is considerably simpler.
\end{recollect}

\begin{lem}
	\label{lem:relative_assembly_map_and_l_theory}
	If $\Gamma$ is a discrete group which contains no virtually cyclic subgroups of type II (e.g. $\Gamma$ has no elements of even order), then the map
	\begin{equation}
		\label{eq:relative_assembly_map}
		H^\Gamma(E_\finite \Gamma;\karLthy) \rightarrow H^\Gamma(E_\vcyc \Gamma;\karLthy)
	\end{equation}
	is an equivalence.
\end{lem}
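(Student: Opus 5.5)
The plan is to apply the transitivity principle of \cref{rmk:transitivity_principle} to the families $\finite \subset \vcyc$ of $\Gamma$, with $\generichlgy = \karLthy$. By that remark, it suffices to show the following: for every virtually cyclic subgroup $V \subset \Gamma$, the relative assembly map
\begin{equation}
	H^V(E_{\finite} V;\karLthy) \rightarrow H^V(\ast;\karLthy) = \karLthy((\module{\bbZ}^\finitemodule,\quadratic)_{BV})
\end{equation}
is an equivalence. Note that $\finite \cap V$ is the family of finite subgroups of $V$. Taking $Z = E_\vcyc \Gamma$ in the remark then yields the equivalence \cref{eq:relative_assembly_map}. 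The hypothesis guarantees that every such $V$ is of type I. For finite $V$ the claim is trivial, since $E_\finite V \simeq \ast$.

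The main case is therefore an infinite type I group $V$, which sits in an extension $1 \to F \to V \to \bbZ \to 1$ with $F$ finite. Equivalently, $V \cong F \rtimes_\phi \bbZ$ for some automorphism $\phi$ of $F$. A model for $E_\finite V$ is $\bbR$ with $V$ acting through $\bbZ$ by translations, with isotropy $F$. This model is a pushout of $V/F \times S^0 \leftarrow V/F \to \ast$ type cells, namely one $0$-cell orbit and one $1$-cell orbit of type $V/F$. The Davis--L\"uck homology $H^V(E_\finite V;\karLthy)$ is therefore the mapping torus (coequaliser) of $\phi_* \colon \karLthy(BF) \to \karLthy(BF)$. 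In other words, the assembly map is the comparison between the mapping torus of $\phi_*$ on $\karLthy(\bbZ F)$ and $\karLthy$ of the twisted Laurent extension $\bbZ F_\phi[t,t^{-1}] = \bbZ V$.

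That this comparison is an equivalence is the $\Lthy$-theoretic Bass--Heller--Swan / Ranicki splitting (the Wang sequence) for twisted Laurent extensions. In Karoubi-invariant (i.e.\ $\langle -\infty\rangle$-decorated) $L$-theory the UNil and Nil-type correction terms vanish for twisted Laurent extensions; there are no Cappell UNil terms since no amalgamated products are involved. I would cite Ranicki's algebraic splitting for $L^{\langle -\infty \rangle}$ of twisted Laurent rings, or its analogue in the Poincar\'e category setting, a localisation/additivity result for Laurent extensions. This identification, including matching decorations with $\karLthy$, is the main obstacle.

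An alternative route avoids the splitting theorem by comparing with $K$-theory. Via \cref{prop:karoubi_invariant_L_theory_computation}, the difference of the decorations is governed by Tate constructions on $K$-theory, and the relevant Nil terms are $2$-divisible/vanish after Tate, as in the classical argument of Farrell--Jones/L\"uck. In either approach, one then concludes using \cref{rmk:transitivity_principle} as above.
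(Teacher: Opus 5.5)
Your proposal is correct and follows essentially the same route as the paper. The transitivity principle reduces to an infinite type I virtually cyclic group $V \cong F \rtimes_\phi \bbZ$; the source is identified with the mapping torus of $\phi_*$, i.e.\ $\colim_{B\bbZ} \karLthy((\module{\bbZ}^\finitemodule,\quadratic)_{BF})$; and one concludes by the twisted Shaneson (Ranicki) splitting for Karoubi-invariant $L$-theory, which the paper cites from Levin. The only cosmetic difference is how the mapping-torus description is obtained: the paper uses finality of $\{\Gamma/F\}$ in $\orbit_{\finite}(\Gamma)$ together with Davis--L\"uck assembly, whereas you use the explicit model $\bbR$ for $E_\finite V$. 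Note that the cell structure of that model is the pushout of $V/F \leftarrow V/F \times S^0 \rightarrow V/F \times D^1$, not quite as you wrote it.
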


\begin{proof}
	By the transitivity principle \cref{rmk:transitivity_principle} we may reduce to the case where $\Gamma$ itself is a virtually cyclic subgroup of type I. 
	If $\Gamma$ is finite, the statement is clear. If $\Gamma$ is infinite, it has a unique maximal finite subgroup $F \subset \Gamma$, and $\Gamma/F$ is infinite cyclic. Since $F$ is a unique maximal subgroup, 
	the full subcategory $\{\Gamma/F\}$ of $\orbit_\finite(\Gamma)$ spanned by $\Gamma/F$ is final. 
	
	The subcategory spanned by $\Gamma/G$ is equivalent to $BW_\Gamma F \simeq B\bbZ$, and the map $B\Gamma \simeq BN_\Gamma F \rightarrow BW_\Gamma F \simeq B\bbZ$ corresponds to a $\bbZ$-action on the space $BF$. By Davis--L\"uck assembly \cref{lem:davis_lueck_assembly} and finality of $\{\Gamma/F\} \subset \orbit_\finite(\Gamma)$, the map \cref{eq:relative_assembly_map} identifies with the assembly map
	\begin{equation}
		\colim_{B\bbZ} \karLthy( (\module{\bbZ}^\finitemodule,\quadratic)_{BF} ) \rightarrow \karLthy( (\module{\bbZ}^\finitemodule,\quadratic)_{B\Gamma} ).
	\end{equation}
	But this map is an equivalence by the twisted Shaneson splitting, see \cite[Sec. 4]{levin2025modelassemblymapbordisminvariant}.
\end{proof}

\subsection{$\Lthy$-theoretic computations}

In this subsection, we use Davis--L\"uck's assembly method to prove the following result. Recall the notations $\orbifoldfundamentalgroup$ and $\nielsspace$ from \cref{sec:setup}.

\begin{thm}
	\label{thm:periodic_karoubi_structure_spectrum_vanishing}
	Assume that $\orbifoldfundamentalgroup$ is a Farrell--Jones group which contains no virtually cyclic subgroups of type II.
	For any finite good decomposition of $\nielsspace$ with complement quotient $(Q,\partial Q)$, we have
	\begin{equation}
		\structurekarper(Q,\partial Q) \simeq 0.
	\end{equation}
\end{thm}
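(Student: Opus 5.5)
The plan is to identify the periodic Karoubi structure spectrum of $(Q,\partial Q)$ as the cofibre of an equivariant assembly map and then kill it with the Farrell--Jones conjecture, exploiting that every piece of the good decomposition has controlled isotropy. By definition, $\structurekarper(Q,\partial Q)$ is the cofibre of $\asm_{\quadkarL}$ evaluated at $(Q,\partial Q)$. It therefore suffices to show that
\[ \quadkarL(\ast)\otimes(Q,\partial Q) \rightarrow \quadkarL(Q,\partial Q) \]
is an equivalence. Both sides are cofibres of the corresponding maps for $\partial Q$ and $Q$, so I would treat the whole pushout square \cref{eq:good_decomposition} at once.

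The first step is to rewrite each term via the Davis--L\"uck homology $H^{\orbifoldfundamentalgroup}(-;\karLthy)$ applied to the $\orbifoldfundamentalgroup$-spaces of \cref{eq:good_decomposition_on_covers}. Since $\widehat{C}$ and $\widehat{\partial C}$ are free $\orbifoldfundamentalgroup$-spaces with quotients $Q$ and $\partial Q$, one has:
\begin{itemize}
\item the left hand side identifies with $H^{\orbifoldfundamentalgroup}(\widehat{C},\widehat{\partial C};\karLthy)$, because on free orbits the Davis--L\"uck theory is the homology $\quadkarL(\ast)\otimes(-)$ of the orbit space;
\item the right hand side $\quadkarL(Q,\partial Q)$ is the value of $\karLthy$ on the pair of groupoids (or spaces) $Q$ and $\partial Q$, which is the homology theory applied to the pair $(\widehat{C}\to\ast,\ \widehat{\partial C}\to\ast)$ once one uses that $\quadkarL$ only depends on $1$-types of components.
\end{itemize}
More precisely, $Q\to B\orbifoldfundamentalgroup$ and $\partial Q\to \nielsspace^G_{hG}$ are $\pi$--$\pi$-equivalences, using condition (3) and the fact that $C^e\to\nielsspace^e$ is a $\pi$--$\pi$-equivalence. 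Hence $\quadkarL(Q)\simeq H^{\orbifoldfundamentalgroup}(\ast;\karLthy)$, and
\[ \quadkarL(\partial Q)\simeq \bigoplus_{F\in\maximal}\karLthy(BN_{\orbifoldfundamentalgroup}F)\simeq H^{\orbifoldfundamentalgroup}\Bigl(\coprod_{F} \induct_{N F}^{\orbifoldfundamentalgroup}\ast;\karLthy\Bigr), \]
using the lemma computing $\widehat{\nielsspace^G}$. Thus the assembly map for the pair becomes the relative assembly map comparing, for $Z\in\{\widehat{\partial C},\widehat{C}\}$, the value on $Z$ with the value on a "$1$-truncated" target, namely $\coprod_F\induct\ast$ and $\ast$ respectively.

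Using the pushout \cref{eq:good_decomposition_on_covers}, the relative term $H^{\orbifoldfundamentalgroup}(\widehat C,\widehat{\partial C})$ equals $H^{\orbifoldfundamentalgroup}(E_\finite\orbifoldfundamentalgroup,\widehat{\nielsspace^G})$. The target relative term is the cofibre of $\bigoplus_F\karLthy(BN F)\to\karLthy(B\orbifoldfundamentalgroup)$. It remains to compare these two. First, $H^{\orbifoldfundamentalgroup}(E_\finite\orbifoldfundamentalgroup;\karLthy)\to H^{\orbifoldfundamentalgroup}(\ast;\karLthy)$ is an equivalence, by \cref{lem:relative_assembly_map_and_l_theory} combined with Farrell--Jones (\cref{conj:farrell_jones}) for $\orbifoldfundamentalgroup$. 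Second, $\widehat{\nielsspace^G}=\coprod_F\induct_{NF}E_\finite NF$, and by induction and the same argument for the Farrell--Jones groups $N_{\orbifoldfundamentalgroup}F$ (subgroups of Farrell--Jones groups, again without type II subgroups), $H(\induct E_\finite NF)\simeq\karLthy(BNF)$. A five lemma on cofibres then gives the equivalence, so $\structurekarper(Q,\partial Q)\simeq0$.

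The main obstacle will be making the identification of $\quadkarL(\ast)\otimes(Q,\partial Q)$ and $\quadkarL(Q,\partial Q)$ with these equivariant homology terms \emph{compatibly} with the assembly maps, so that the comparison above really is the map $\asm_{\quadkarL}$. Concretely, I would check on free orbits that the Davis--L\"uck theory restricted to free $\orbifoldfundamentalgroup$-spaces is the left Kan extension of $\karLthy$ on $\spc$, and hence agrees with the universal approximation $\asm$. I would also use that $\quadkarL$ inverts $\pi$--$\pi$-equivalences; this is the Karoubi analogue of \cref{prop:algebraic_pi_pi_theorem}, which I would deduce from \cref{prop:algebraic_pi_pi_theorem} together with \cref{prop:karoubi_invariant_L_theory_computation}, since $\ncKthy$-theory of group rings depends only on $\pi_1$.
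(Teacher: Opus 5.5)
Your proposal is correct and follows essentially the same route as the paper: identify $\structurekarper(Q,\partial Q)$ with the total cofibre of the square obtained by applying $H^{\orbifoldfundamentalgroup}(-;\karLthy)$ to $\widehat{\partial C}\to\widehat{C}$ over $\coprod_{F}\orbifoldfundamentalgroup/N_{\orbifoldfundamentalgroup}F\to\ast$. You then conclude, as the paper does, using excision for the pushout of covers, the type I relative assembly lemma, and Farrell--Jones for $\orbifoldfundamentalgroup$ and for the normalisers $N_{\orbifoldfundamentalgroup}F$. The differences are cosmetic: you merge the paper's two steps $E_\finite\to E_\vcyc\to\ast$ into one, and you make explicit the $\pi$--$\pi$-invariance of $\quadkarL$ (via the pullback with $\tateK\to\nctateK$), which the paper leaves under ``unraveling''.
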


\begin{proof}
	Using the commutative diagram \cref{eq:good_decomposition_on_covers} we build the following diagram of spectra.
	\begin{equation}
		\label{eq:structure_spectrum_computation_diagram}
		\begin{tikzcd}
			H^\orbifoldfundamentalgroup(\widehat{\partial C};\karLthy) \ar[r] \ar[d] & \widehat{C} \ar[d] \\
			H^\orbifoldfundamentalgroup(\widehat{\nielsspace^G} \simeq
			\coprod_{F \in \maximal} \induct_{N_\orbifoldfundamentalgroup F}^\orbifoldfundamentalgroup E_\finite N_\orbifoldfundamentalgroup F;\karLthy)  \ar[r] \ar[d] & H^\orbifoldfundamentalgroup(E_\finite \orbifoldfundamentalgroup;\karLthy)  \ar[d]\\
			H^\orbifoldfundamentalgroup(\coprod_{F \in \maximal} \induct_{N_\orbifoldfundamentalgroup F}^\orbifoldfundamentalgroup E_\vcyc N_\orbifoldfundamentalgroup F;\karLthy)  \ar[r] \ar[d] & H^\orbifoldfundamentalgroup(E_\vcyc \orbifoldfundamentalgroup;\karLthy)  \ar[d]\\
			H^\orbifoldfundamentalgroup(\coprod_{F \in \maximal} \orbifoldfundamentalgroup/ N_\orbifoldfundamentalgroup F;\karLthy)  \ar[r] & H^\orbifoldfundamentalgroup(\ast;\karLthy) 
		\end{tikzcd}
	\end{equation}
	Unraveling identifies $\structurekarper(Q,\partial Q)$ with the total cofibre of the outermost square of \cref{eq:structure_spectrum_computation_diagram}. We argue that all three squares are pushout squares. This implies the result, since then the outer square is a pushout as well, and the total cofibre of a pushout square is zero.
	
	The first square is a pushout square, since it is constructed by applying  $H^\orbifoldfundamentalgroup(-;\karLthy)$ to a pushout of $\orbifoldfundamentalgroup$-spaces. In the second square, both vertical maps are equivalences by \cref{lem:relative_assembly_map_and_l_theory}, using that $\orbifoldfundamentalgroup$ has no virtually cyclic subgroups of type II. The vertical maps in the third square are equivalences as a consequence of the Farrell--Jones conjecture for $\karLthy$.
\end{proof}

\subsection{$\ncKthy$-theory}

Our next objective is to show that the map $\tateK(Q,\partial Q) \rightarrow \nctateK(Q,\partial Q)$ is an equivalence. 
We start with a preliminary lemma, that reduces to a statement about $\ncKthy$-without taking Tate fixed points. This is practical for considerations with the Farrell--Jones conjecture: the Farrell--Jones conjecture involves an infinite colimit, and the Tate construction does not commute with all colimits. 
\begin{lem}
	\label{lem:tateK_criterion}
	Let $(Y,A)$ be a pair of spaces. Suppose the following assumptions are satisfied.
	\begin{enumerate}
		\item The map $\ncKthy_0(A) /\Kthy_0(A) \rightarrow \ncKthy_0(Y)/\Kthy_0(Y)$ is an isomorphism.
		\item For each $i \leq -1$, the map $\ncKthy_i(A) \rightarrow \ncKthy_i(Y)$ is an isomorphism.
	\end{enumerate}
	Then the map $\tateK(Y,A) \rightarrow \nctateK(Y,A)$ is an isomorphism. 
\end{lem}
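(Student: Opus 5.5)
The plan is to reduce the statement to the vanishing of the Tate construction on the cofibre of $\Kthy \rightarrow \ncKthy$, which is a bounded-above spectrum. Write $D(-) \coloneqq \cofib(\Kthy(-) \rightarrow \ncKthy(-))$, viewed as a functor $\spcpairtrans \rightarrow \spectra^{BC_2}$. Since $(-)^{tC_2}$ is exact, the fibre of $\tateK(Y,A) \rightarrow \nctateK(Y,A)$ is $\Sigma^{-1} D(Y,A)^{tC_2}$. Here $D(Y,A)$ is the relative term, i.e. the cofibre of $D(A) \rightarrow D(Y)$; this uses that $\Kthy$ and $\ncKthy$ of $(\module{\bbZ}^\finitemodule,\quadratic)_{(Y,A)}$ sit in cofibre sequences as in \cref{subsec:L_thy_with_trf}. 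More precisely, the split Poincar\'e--Verdier square \cref{eq:relative_metabolic_construction} gives these cofibre sequences. For $\Kthy$, $\ncKthy$ applied to $\Met$, the value is identified with $K$ of the underlying category, which is acted on by the flip involution. I would note that this may introduce a shift or twist in the $C_2$-action, but it does not affect the vanishing argument below. It therefore suffices to show $D(Y,A) \simeq 0$, or at least $D(Y,A)^{tC_2} \simeq 0$.

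Next I would compute the homotopy groups of $D(X)$ for a space $X$. Since $\tau_{\geq 1}\Kthy \simeq \tau_{\geq 1}\ncKthy$, and $\Kthy$ is connective with $\pi_0$ mapping injectively into $\ncKthy_0$, we get:
\begin{itemize}
	\item $\pi_i D(X) = 0$ for $i \geq 1$;
	\item $\pi_0 D(X) = \ncKthy_0(X)/\Kthy_0(X)$;
	\item $\pi_i D(X) = \ncKthy_i(X)$ for $i \leq -1$.
\end{itemize}
Injectivity holds because $\Kthy_0$ is $K_0$ of the stable category and $\ncKthy_0$ is $K_0$ of its idempotent completion.

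Then I would use the long exact sequence of $D(A) \rightarrow D(Y) \rightarrow D(Y,A)$. Hypotheses (1) and (2) say that $\pi_i D(A) \rightarrow \pi_i D(Y)$ is an isomorphism for all $i \leq 0$. For $i \geq 1$ both groups vanish, so the map is an isomorphism in every degree. Hence $D(Y,A) \simeq 0$, and the fibre of $\tateK(Y,A) \rightarrow \nctateK(Y,A)$ vanishes.

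The main obstacle I expect is bookkeeping. One must make sure that the relative constructions for $\Kthy$ and $\ncKthy$, defined via the pullback \cref{eq:relative_metabolic_construction} with the shifted form $\Sigma\qoppa$, really yield the cofibres of $\Kthy(A) \rightarrow \Kthy(Y)$ and $\ncKthy(A) \rightarrow \ncKthy(Y)$ up to a common shift. One must also check that this identification is compatible with the comparison map $\Kthy \rightarrow \ncKthy$ as $C_2$-spectra. I would handle this by noting that both $\Kthy$ and $\ncKthy$ are Verdier-localising on split Poincar\'e--Verdier squares and that $\Kthy \rightarrow \ncKthy$ is natural. The only remaining input is the identification of $\pi_0$, where one might need that $\Kthy_0 \rightarrow \ncKthy_0$ is injective. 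Even without injectivity, one can replace the quotient in hypothesis (1) with the cokernel and the argument goes through, provided $\ker$ is controlled; I would verify injectivity via Thomason's classification of dense subcategories.
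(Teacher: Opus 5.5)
There is a genuine gap. Your reduction to showing $D(Y,A)^{tC_2}\simeq 0$ is correct. So is your computation of $\pi_\ast D(X)$, which is exactly the paper's final step. The problem is the identification of $D(Y,A)$ with $\cofib(D(A)\to D(Y))$, and with it the conclusion $D(Y,A)\simeq 0$; both are false.

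The cofibre sequences in \cref{subsec:L_thy_with_trf} come from bordism invariant functors vanishing on the metabolic corner of \cref{eq:relative_metabolic_construction}. $\Kthy$ and $\ncKthy$ are not bordism invariant. Applied to that split Poincar\'e--Verdier square, they give a bicartesian square whose upper right corner is $\Kthy$ of an arrow category. That corner is $\Kthy(Y)\oplus\Kthy(Y)$, with the summands exchanged by the duality, not $0$. Consequently, forgetting the $C_2$-action, $\Kthy(Y,A)\simeq\Kthy(Y)\oplus\Kthy(A)$, likewise for $\ncKthy$, and $D(Y,A)\simeq D(Y)\oplus D(A)$.

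Concretely, take $A=Y$. The hypotheses hold trivially, yet $D(Y,Y)\simeq D(Y)\oplus D(Y)$ is nonzero whenever $\reducedK_0(\bbZ\pi_1 Y)\neq 0$. For instance, $Y=BC_{23}$ has $\reducedK_0(\bbZ C_{23})\cong\bbZ/3$. So the ``bookkeeping'' you defer cannot be carried out: Verdier-localisation of $\Kthy$ and $\ncKthy$ yields a pullback with a nonzero metabolic corner, not a cofibre sequence. Your long exact sequence for $D(A)\to D(Y)\to D(Y,A)$ therefore does not exist.

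The vanishing can only hold after taking Tate fixed points. There the metabolic corner disappears, being an induced $C_2$-spectrum; equivalently, $\tateK$ and $\nctateK$ are bordism invariant. One repair along your lines goes as follows.
\begin{enumerate}
\item By bordism invariance, $\tateK(Y,A)$ is the relative term of $\tateK(A)\to\tateK(Y)$ computed with the shifted form $\Sigma\qoppa$, and likewise for $\nctateK$. This is natural in the comparison map.
\item Hence the fibre of $\tateK(Y,A)\to\nctateK(Y,A)$ is, up to a shift, the cofibre of $D'(A)^{tC_2}\to D'(Y)^{tC_2}$. Here $D'$ is $D$ computed with $\Sigma\qoppa$, which changes the involution but not the underlying spectra.
\item By your homotopy-group computation and the hypotheses, $D'(A)\to D'(Y)$ is an underlying equivalence. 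It is therefore an equivalence of spectra with $C_2$-action, so the cofibre vanishes.
\end{enumerate}

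The paper achieves the same thing via the maps $\alpha\colon\Kthy(A,A)\to\Kthy(Y,A)$ and $\beta\colon\ncKthy(A,A)\to\ncKthy(Y,A)$. Since $\tateK(A,A)\simeq\nctateK(A,A)\simeq 0$, one may replace $\Kthy(Y,A)$ and $\ncKthy(Y,A)$ by $\cofib(\alpha)$ and $\cofib(\beta)$ without changing the Tate constructions. These cofibres do have underlying spectra $\Kthy(Y)/\Kthy(A)$ and $\ncKthy(Y)/\ncKthy(A)$, which is the relative term you wanted; your computation of $\pi_\ast D$ then finishes the proof.
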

\begin{proof}
	Recall the definition of $\Kthy(Y,A)$ as $\Kthy((\module{\bbZ}^\finitemodule)_{(Y,A)})$, as constructed by \cref{eq:relative_metabolic_construction}. Note that the stable category underlying $\Met(\module{\bbZ}^\finitemodule)_Y$ is the arrow category of $(\module{\bbZ}^\finitemodule)_Y$, the tensor of $\module{\bbZ}^\finitemodule$ over $Y$. Hence $\Kthy(\Met(\module{\bbZ}^\finitemodule)_Y) \simeq \Kthy(Y) \oplus \Kthy(Y)$. This leads to the observation, that without regard for the $C_2$-action $\Kthy(Y,A) \simeq \Kthy(Y) \oplus \Kthy(A)$. Similarly $\ncKthy(Y,A) \simeq \ncKthy(Y) \oplus \ncKthy(A)$.
	
	With these preliminary observations in mind, consider the following commuting diagram, where $\alpha$ and $\beta$ are induced by the inclusion of pairs $(A,A) \subset (Y,A)$.
	\begin{equation}
		\begin{tikzcd}
			\Kthy(A,A) \ar[r, "\alpha"] \ar[d] & \Kthy(Y,A) \ar[d] \\
			\ncKthy(A,A) \ar[r, "\beta"]  & \ncKthy(Y,A)
		\end{tikzcd}
	\end{equation}
	Since $\tateK(A,A) \simeq \nctateK(A,A) \simeq 0$ by bordism invariance, we observe that the map $\tateK(Y,A) \rightarrow \nctateK(Y,A)$ is an equivalence if and only if the map $\cofib(\alpha)^{tC_2} \rightarrow \cofib(\beta)^{tC_2}$ is an equivalence. This is true, for example, if before taking Tate fixed points, the map $\cofib(\alpha) \rightarrow \cofib(\beta)$ is an equivalence. 
	
	Our preliminary observations identify this map exactly with the map $\Kthy(Y) / \Kthy(A) \rightarrow \ncKthy(Y)/\ncKthy(A)$. This map is an equivalence if and only if the map $\ncKthy(A) /\Kthy(A) \rightarrow \ncKthy(Y)/\Kthy(Y)$ is an equivalence. We claim that this reduces to the two claims in the statement of the lemma. For any space $Y$, we have that $\pi_\ast \ncKthy(Y)/\Kthy(Y)$ vanishes for $\ast \geq 2$. For $\ast = 1,0$ we consider the exact sequence
	\begin{equation}
		\Kthy_1(Y) \xrightarrow{i_1} \ncKthy_1(Y) \rightarrow \pi_1 \ncKthy(Y)/\Kthy(Y) \rightarrow \Kthy_0(Y) \xrightarrow{i_0} \ncKthy_0(Y) \rightarrow \pi_0 \ncKthy(Y)/\Kthy(Y) \rightarrow \Kthy_{-1}(Y).
	\end{equation}
	First, the map $i_1$ is an isomorphism and $i_0$ is an injective map. This shows that $\pi_1 \ncKthy(Y)/\Kthy(Y) = 0$. Second, $\Kthy_{-1}(Y) = 0$, so the sequence induces an isomorphism $\ncKthy_0(Y)/\Kthy_0(Y) \simeq \pi_0 \ncKthy(Y)/\Kthy(Y)$. Lastly, we observe that for $\ast \leq -1$ we have that the map $ \pi_\ast \ncKthy(Y) \rightarrow \pi_\ast \ncKthy(Y)/\Kthy(Y)$ is an isomorphism.
	Hence, the map $\pi_\ast \ncKthy(A)/\Kthy(A) \rightarrow \pi_\ast \ncKthy(Y)/\Kthy(Y)$ is always an isomorphism for $\ast \geq 1$, under the first condition for $\ast = 0$ and under the second condition for $\ast \leq -1$.
\end{proof}

\begin{obs}
	\label{obs:tateK_criterion_only_depends_on_1_type}
	Note that the criterion derived in \cref{lem:tateK_criterion} only depends on the $1$-type of $(Y,A)$. For relevance to us, if $(Y,A) \rightarrow (Y',A')$ is a $\pi$--$\pi$-equivalence of pairs, the criterion is satisfied for $(Y,A)$ if and only if it is satisfied for $(Y',A')$.
\end{obs}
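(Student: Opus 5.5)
The plan is to justify the observation by showing that every group occurring in the two conditions of \cref{lem:tateK_criterion} is a functor of the $1$-type of $Y$ and of $A$, naturally in the maps, and that a $\pi$--$\pi$-equivalence of pairs restricts to $\pi$--$\pi$-equivalences $A \to A'$ and $Y \to Y'$. Once this is in place, the two maps in the criterion for $(Y,A)$ are identified with those for $(Y',A')$ through a commuting square whose vertical maps are isomorphisms. Hence one is an isomorphism if and only if the other is.

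\textbf{Step 1: $\pi$--$\pi$-invariance of $\Kthy$ and $\ncKthy$.} For a space $Z$ I will decompose $Z$ into path components, using that $(\module{\bbZ}^\finitemodule)_{(-)}$ turns coproducts into products of categories. For a connected $Z$, the stable category $(\module{\bbZ}^\finitemodule)_Z$ is generated by the local system $\bbZ[\Omega Z]$. Its idempotent completion consists of the compact $\bbZ[\Omega Z]$-modules, and $\pi_0\bbZ[\Omega Z] = \bbZ[\pi_1 Z]$.

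\textbf{Step 2: the relevant groups depend only on $\pi_0$ of the group ring.} For a connective ring spectrum $R$, the group $\ncKthy_i(R)$ for $i \leq 0$ depends only on $\pi_0 R$; this is the standard fact that negative and zeroth nonconnective $K$-groups are computed from $\pi_0$. Likewise $\Kthy_0$ is $\pi_0$-invariant, being the Grothendieck group of the idempotent-incomplete category, and its image in $\ncKthy_0$ is the subgroup generated by free modules. Consequently $\ncKthy_0(Z)/\Kthy_0(Z)$ and $\ncKthy_i(Z)$ for $i \leq -1$ depend only on $\bbZ[\pi_1 Z]$ for each component, that is, only on the $1$-type $\tau_{\leq 1} Z$. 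The same reasoning shows that a map $Z \to Z'$ inducing an equivalence on $1$-truncations induces isomorphisms on these groups. This is the same invariance already used in the proof of \cref{lem:finiteness_for_good_decompositions}, namely that $\reducedK_0$ inverts $\pi$--$\pi$-equivalences.

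\textbf{Step 3: conclusion.} Given a $\pi$--$\pi$-equivalence of pairs $(Y,A) \to (Y',A')$, meaning that both components are $\pi$--$\pi$-equivalences, naturality yields commuting squares comparing the maps $\ncKthy_0(A)/\Kthy_0(A) \to \ncKthy_0(Y)/\Kthy_0(Y)$ and $\ncKthy_i(A) \to \ncKthy_i(Y)$ with their primed versions. By Step 2 the vertical maps in these squares are isomorphisms, and the equivalence of the two criteria follows.

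\textbf{Main obstacle.} The only step with real content is Step 2, the $\pi_0$-dependence of $\ncKthy_{\leq 0}$. I would cite it rather than prove it, or reduce it to the classical statement for $\bbZ[\pi_1]$ via Bass's description of negative $K$-theory.
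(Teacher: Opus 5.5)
Your proposal is correct. It is essentially the argument the paper leaves implicit, since the observation is stated without proof: componentwise, the groups in \cref{lem:tateK_criterion} are $\reducedK_0(\bbZ[\pi_1])$ and the negative $K$-groups of $\bbZ[\pi_1]$, which depend only on $\pi_0$ of the spectral group ring and so are inverted by $\pi$--$\pi$-equivalences, exactly as in the proof of \cref{lem:finiteness_for_good_decompositions}. The only nit is that for infinitely many path components the colimit category is a direct sum rather than a product of categories, which is harmless because $K$-theory commutes with filtered colimits.
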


\begin{lem}
	\label{lem:Khty_and_torsionfree_stuff}
	Let $\Gamma$ be a Farrell--Jones group.
	Suppose that $X$ is a $\Gamma$-space with torsion-free isotropy groups. Then the map
	\begin{equation}
		H^\Gamma(X;\Kthy) \rightarrow H^\Gamma(X;\ncKthy)
	\end{equation}
	is an isomorphism.
\end{lem}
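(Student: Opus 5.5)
Both functors $H^\Gamma(-;\Kthy)$ and $H^\Gamma(-;\ncKthy)$ are colimit preserving on $\spc_\Gamma$, since by \cref{cons:equivariant_homology_theories} they are left Kan extended from $\orbit(\Gamma)$. The natural transformation between them is induced by $\Kthy \rightarrow \ncKthy$. A $\Gamma$-space with torsion-free isotropy lies in the full subcategory generated under colimits by the orbits $\Gamma/H$ with $H$ torsion-free. Hence I would first reduce to the case $X = \Gamma/H$ with $H\subset \Gamma$ torsion-free. In that case the map in question is
\[ \Kthy(BH) = \Kthy(\module{\bbZ H}^\finitemodule) \longrightarrow \ncKthy(BH) = \ncKthy(\module{\bbZ H}^\finitemodule). \]
By \cref{recollect:Farrell_Jones_groups}, $H$ is again a Farrell--Jones group, being a subgroup of one.

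Next I would analyse when this comparison map is an equivalence. By the properties recalled for $\Kthy \rightarrow \ncKthy$, it is always an equivalence on $\tau_{\geq 1}$. So it suffices to show two things. First, $\ncKthy_i(\bbZ H)=0$ for $i\leq -1$. Second, the map $\Kthy_0 \rightarrow \ncKthy_0$ is surjective; it is injective, as used in the proof of \cref{lem:tateK_criterion}. Here $\Kthy_0$ of the finite modules is the subgroup of $K_0(\bbZ H)$ generated by the free module. So surjectivity amounts to $\reducedK_0(\bbZ H)=0$.

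Both statements follow from computing $\ncKthy(\bbZ H)$ by the Farrell--Jones conjecture. By \cref{conj:farrell_jones} for $H$, we have $H^H(E_\vcyc H;\ncKthy)\simeq \ncKthy(\bbZ H)$. Since $H$ is torsion-free, its virtually cyclic subgroups are trivial or infinite cyclic. For $V\cong\bbZ$, Bass--Heller--Swan together with regularity of $\bbZ$ gives $\ncKthy(\bbZ[\bbZ])\simeq \ncKthy(\bbZ)\otimes B\bbZ_+$, i.e.\ $H^V(EV;\ncKthy)\rightarrow H^V(\ast;\ncKthy)$ is an equivalence. By the transitivity principle \cref{rmk:transitivity_principle}, applied to the families $\{e\}\subset\vcyc$, we get
\[ \ncKthy(\bbZ H)\simeq H^H(EH;\ncKthy)\simeq \ncKthy(\bbZ)\otimes BH_+. \]
Since $\ncKthy(\bbZ)$ is connective with $\ncKthy_0(\bbZ)=\bbZ$, the Atiyah--Hirzebruch spectral sequence gives $\ncKthy_i(\bbZ H)=0$ for $i<0$. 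It also shows that $\ncKthy_0(\bbZ H)=H_0(BH;\bbZ)=\bbZ$ is generated by the class of the free module, so $\reducedK_0(\bbZ H)=0$. This gives the required statement on orbits, and hence for all $X$ with torsion-free isotropy.

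I expect the main obstacle to be the bookkeeping in degree $0$. One must be careful that connective $\Kthy$ of the non-idempotent-complete category $\module{\bbZ H}^\finitemodule$ differs from $\ncKthy$ exactly by $\reducedK_0$ and the negative groups. The Bass--Heller--Swan input must also be phrased as an assembly equivalence compatible with the Davis--L\"uck framework. If the transitivity step is awkward, I would instead cite the standard consequence of Farrell--Jones for torsion-free groups, that $\reducedK_0(\bbZ H)$ and $\ncKthy_{<0}(\bbZ H)$ vanish (e.g.\ \cite{lueckIC}).
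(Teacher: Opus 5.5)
Your proposal is correct and follows the paper's route: you reduce via colimit preservation to orbits $\Gamma/V$ with $V$ torsion-free, and then use that the Farrell--Jones conjecture for $V$, together with regularity of $\bbZ$, makes $\ncKthy(\module{\bbZ V}^\finitemodule)$ connective with $\ncKthy_0$ generated by the free module of rank one. The paper simply cites this consequence from L\"uck's book, whereas you derive it yourself from the transitivity principle and the Bass--Heller--Swan splitting for $\bbZ[\bbZ]$, which is a faithful expansion of the same input.
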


\begin{proof}
	Both sides commute with colimits of spaces with torsion-free isotropy groups, so the statement reduces to the case $X = \Gamma/V$, where $V$ is a torsionfree subgroup. In this case, it unravels to the map
	\begin{equation}
		\Kthy(\module{\bbZ V}^\finitemodule) \rightarrow \ncKthy(\module{\bbZ V}^\finitemodule)
	\end{equation}
	being an isomorphism. In other words, we have to show that $\ncKthy(\module{\bbZ V}^\finitemodule)$ is connective and that $\ncKthy_0(\module{\bbZ V}^\finitemodule)$ is cyclic, generated by a free module of rank one. Since $\bbZ$ is a regular ring, this is a consequence of the Farrell--Jones conjecture for $V$ \cite[Conj. 2.60 and 4.18]{lueckIC}.
\end{proof}

\begin{lem}
	\label{lem:insensitivity_of_connective_k_theory_to_isotropy_in_nonpositive_degrees}
	Let $\Gamma$ be any discrete group, and let $f \colon X \rightarrow Y$ be any map of $\Gamma$-spaces. If $f^{e} \colon X^{e}_{h\Gamma} \rightarrow Y^{e}_{h\Gamma}$ induces a bijection on path components, the induced map
	\begin{equation}
		\pi_\ast H^\Gamma(X;\Kthy) \rightarrow \pi_\ast H^\Gamma(Y;\Kthy)
	\end{equation}
	is an isomorphism for $\ast \leq 0$.
\end{lem}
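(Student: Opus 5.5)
Both sides are colimit-preserving functors of $X$, so the plan is to reduce to orbits and use known facts about connective $K$-theory of group rings in degrees $\leq 0$.

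Concretely, for a groupoid $\mathcal{G}$ the connective $K$-theory $\Kthy((\module{\bbZ}^\finitemodule)_{\mathcal{G}})$ is the direct sum over path components of $\mathcal{G}$ of $\Kthy(\module{\bbZ H}^\finitemodule)$, with $H$ the automorphism group of that component. It is connective, so $\pi_i$ vanishes for $i<0$. Moreover, $\pi_0$ is $\bbZ$ on each component: the category $\module{\bbZ H}^\finitemodule$ is the stable subcategory generated by $\bbZ H$, so it contains only finite (not merely finitely dominated) modules. Its $K_0$ is therefore generated by $[\bbZ H]$, and the augmentation $\bbZ H \to \bbZ$ shows this class has infinite order. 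I would state this as a first observation: $\pi_\ast\Kthy(\mathcal{G})$ vanishes for $\ast<0$, and $\pi_0\Kthy(\mathcal{G}) \cong \bbZ[\pi_0 \mathcal{G}]$, naturally in $\mathcal{G}$. The naturality holds because a functor of groupoids sends the free rank-one module to the free rank-one module (induction).

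Next I compute $H^\Gamma(X;\Kthy)$ as the left Kan extension, that is, the colimit over $\orbit(\Gamma)_{/X}$ of $\Kthy(S_{h\Gamma})$. Since each term is connective, the colimit is connective, and $\pi_0$ of the colimit is the colimit in abelian groups of the $\pi_0$'s. Here I use that $\tau_{\leq 0}$ on connective spectra preserves colimits, or equivalently that $\pi_0$ is right exact on connective spectra. This gives
\[ \pi_0 H^\Gamma(X;\Kthy) \cong \colim_{S \in \orbit(\Gamma)_{/X}} \bbZ[\pi_0 S_{h\Gamma}] \cong \bbZ[\pi_0 X_{h\Gamma}], \]
because $\pi_0 S_{h\Gamma}$ is a point for an orbit, and $X \mapsto \bbZ[\pi_0 X_{h\Gamma}]$ is the colimit-preserving extension of the constant functor $\bbZ$ on orbits. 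Here $X_{h\Gamma}$ means the homotopy orbits of the underlying space $X^e$, matching the hypothesis. Negative homotopy groups vanish for both $X$ and $Y$.

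So the map $\pi_\ast H^\Gamma(X;\Kthy) \to \pi_\ast H^\Gamma(Y;\Kthy)$ is $0 \to 0$ for $\ast<0$. For $\ast = 0$ it is $\bbZ[\pi_0 X^e_{h\Gamma}] \to \bbZ[\pi_0 Y^e_{h\Gamma}]$ induced by $f$, which is an isomorphism because $\pi_0 f^e$ is a bijection.

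The main point needing care is the naturality of the identification of $\pi_0$, namely that it is compatible with the morphisms of orbits used in the Kan extension. Those morphisms include the maps $BH \to BH'$ coming from inclusions and conjugations. Under them the class $[\bbZ H]$ goes to $[\bbZ H']$ by induction, so the identification with the constant functor $\bbZ$ is indeed natural.
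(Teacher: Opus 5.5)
Your proof is correct and follows essentially the paper's argument: connectivity settles the negative degrees, $\pi_0$ commutes with colimits of connective spectra so everything reduces to orbits, and $\pi_0\Kthy(\module{\bbZ H}^\finitemodule)\cong\bbZ$ is generated by the free module. The only difference is bookkeeping. The paper compares with the natural map $Y\times E\Gamma\to Y$, whose source has $\pi_0$ equal to $\bbZ[\pi_0 Y^{e}_{h\Gamma}]$, so it only needs an objectwise check on orbits. You instead identify $\pi_0 H^\Gamma(-;\Kthy)$ with $\bbZ[\pi_0(-)^{e}_{h\Gamma}]$ directly, which is why you must verify naturality along orbit-category morphisms, and you do so correctly.
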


\begin{proof}
	For $\ast \leq -1$, both sides are zero, since $H^\Gamma(-;\Kthy)$ has connective coefficients, and connective spectra are closed under colimits. For $\ast = 0$, observe that we can reduce to the map $Y \times E\Gamma \rightarrow Y$, since $\pi_0 H^\Gamma(Y \times E \Gamma;\Kthy) \simeq \pi_0 \Kthy(\ast) \otimes Y^{e}_{h\Gamma}$ only depends on the path components of $Y^{e}_{h\Gamma}$. For this case, observe that $\pi_0$ commutes with colimits of connective spectra, hence we may reduce further to the case of an orbit $\Gamma/H$. But the map $\pi_0 H^\Gamma(\Gamma/H \times E\Gamma;\Kthy) \rightarrow H^\Gamma(\Gamma/H;\Kthy)$ identifies with the map $\pi_0 \Kthy(\module{\bbZ}^\finitemodule) \otimes BH_+ \rightarrow \pi_0 \Kthy(\module{\bbZ H}^\finitemodule)$, which is an isomorphism: both sides are isomorphic to $\bbZ$, detected by the rank of a free $\bbZ$- resp. $\bbZ H$-module.
\end{proof}

After these general considerations, we again specialise to the study of complement quotients $(	Q,\partial Q)$ and the orbifold fundamental group $\orbifoldfundamentalgroup$.
To this end, we introduce a few families of subgroups.
Let $\torsionfree$ be the family of torsionfree subgroups of the orbifold fundamental group $\orbifoldfundamentalgroup$. Recall the notion of a full subgroup of $\orbifoldfundamentalgroup$ from \cref{def:full_subgroup}.
We define a family of subgroups $\normfam$ by
\begin{equation}
	\normfam \coloneqq \{ H \subset \orbifoldfundamentalgroup \mid \text{$H$ is torsionfree or normalises a full subgroup} \}.
\end{equation}
\begin{obs}
	\label{obs:normfam_and_vcyc}
	Note that $\normfam$ is indeed a family, and $\torsionfree \subset \normfam$. 
	By the second point of \cref{appendixlemma:normalisers_and_virtually_cyclic_subgroups},  every type I virtually cyclic subgroup of $\orbifoldfundamentalgroup$ is contained in $\normfam$. 
\end{obs}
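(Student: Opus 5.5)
The claims are that $\normfam$ is a family, that $\torsionfree \subset \normfam$, and that every type I virtually cyclic subgroup of $\orbifoldfundamentalgroup$ lies in $\normfam$. I will check them directly from the definitions. The only input is the property that every nontrivial finite subgroup of $\orbifoldfundamentalgroup$ is contained in a unique full subgroup. This holds in our setting (a good decomposition of $\nielsspace$ exists); see \cref{appendixlemma:characterisations_of_property_M}. I expect \cref{appendixlemma:normalisers_and_virtually_cyclic_subgroups} to be essentially the argument in the third paragraph below.

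\emph{$\normfam$ is a family.} Closure under subgroups is immediate: a subgroup of a torsionfree group is torsionfree, and a subgroup of $N_\orbifoldfundamentalgroup F$ still normalises $F$. For conjugation, conjugates of torsionfree subgroups are torsionfree. If $H$ normalises a full subgroup $F$, then $gHg^{-1}$ normalises $gFg^{-1}$. The group $gFg^{-1}$ is again full: the composite $gFg^{-1} \subset \orbifoldfundamentalgroup \rightarrow G$ is the composite for $F$ followed by conjugation by the image of $g$ in $G$, and this is an isomorphism. The inclusion $\torsionfree \subset \normfam$ holds by definition.

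\emph{Type I virtually cyclic subgroups.} Let $V \subset \orbifoldfundamentalgroup$ be virtually cyclic of type I.
\begin{itemize}
\item If $V$ is finite, it is either trivial, hence torsionfree, or it lies in some full $F$ by the property above, hence $V \subset F \subset N_\orbifoldfundamentalgroup F$.
\item If $V$ is infinite, it has a unique maximal finite subgroup $K$, which is normal in $V$, with $V/K \cong \bbZ$. If $K$ is trivial, then $V \cong \bbZ$ is torsionfree.
\item If $V$ is infinite and $K \neq e$, let $F$ be the unique full subgroup containing $K$. For $v \in V$ we have $K = vKv^{-1} \subset vFv^{-1}$, and $vFv^{-1}$ is full by the conjugation argument above. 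Uniqueness then forces $vFv^{-1} = F$. Hence $V \subset N_\orbifoldfundamentalgroup F$, so $V \in \normfam$.
\end{itemize}

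The only step requiring care is this last uniqueness argument, together with checking that the property ``each nontrivial finite subgroup lies in a unique full subgroup'' is really available here. Everything else is formal.
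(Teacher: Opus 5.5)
Your argument is correct and is essentially the paper's. The paper simply cites the second point of \cref{appendixlemma:normalisers_and_virtually_cyclic_subgroups}, whose proof is exactly your conjugation-plus-uniqueness argument applied to the maximal finite subgroup $V_{\mathrm{max}}$; you additionally spell out the family axioms and the finite and torsionfree cases, which the paper leaves implicit.
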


\begin{lem}
	\label{lem:pushout_for_normaliser_family}
	There is a pushout of $\orbifoldfundamentalgroup$-spaces as follows.
	\begin{equation}
		\label{eq:pushout_for_normaliser_family}
		\begin{tikzcd}
			\coprod_{F \in \maximal} (\orbifoldfundamentalgroup/N_\orbifoldfundamentalgroup F) \times E_\torsionfree \orbifoldfundamentalgroup \ar[r] \ar[d] & E_\torsionfree \orbifoldfundamentalgroup \ar[d]\\
			\coprod_{F \in \maximal} \orbifoldfundamentalgroup/ N_\orbifoldfundamentalgroup F \ar[r] & E_{\normfam} \orbifoldfundamentalgroup
		\end{tikzcd}
	\end{equation}
\end{lem}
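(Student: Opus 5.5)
The plan is to check the pushout on $H$-fixed points for every subgroup $H \subset \orbifoldfundamentalgroup$. A square of $\orbifoldfundamentalgroup$-spaces is a pushout iff it is a pushout on $H$-fixed points for all $H$, since fixed points $\spc_\orbifoldfundamentalgroup \to \spc$ jointly detect equivalences and commute with colimits in the presheaf model. So I would first form the pushout $P$ of the upper-left span, and then show that the canonical map $P \to E_{\normfam}\orbifoldfundamentalgroup$ is an equivalence. This amounts to showing that $P^H$ is contractible for $H \in \normfam$ and empty otherwise. The maps in the square are the evident ones: projection onto the orbit factor, projection onto $E_\torsionfree \orbifoldfundamentalgroup$, and the essentially unique maps into $E_{\normfam}\orbifoldfundamentalgroup$. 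These exist because $\torsionfree \subset \normfam$ (\cref{obs:normfam_and_vcyc}) and $N_\orbifoldfundamentalgroup F \in \normfam$ by definition.

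The fixed points are computed by cases on $H$.

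If $H$ is torsionfree, then $(E_\torsionfree\orbifoldfundamentalgroup)^H \simeq \ast$. The left column is then $S_H \times \ast \to S_H$ with $S_H = \coprod_F (\orbifoldfundamentalgroup/N_\orbifoldfundamentalgroup F)^H$, which is an equivalence. Hence $P^H \simeq \ast$.

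If $H$ is not torsionfree, then $(E_\torsionfree\orbifoldfundamentalgroup)^H = \emptyset$ and $P^H \simeq S_H$. We then need $S_H$ to be a point exactly when $H$ normalises a full subgroup, and empty otherwise. A point $gN_\orbifoldfundamentalgroup F$ is $H$-fixed iff $H \subset gN_\orbifoldfundamentalgroup F g^{-1} = N_\orbifoldfundamentalgroup(gFg^{-1})$, that is, iff $H$ normalises the full subgroup $gFg^{-1}$. So $S_H$ is in bijection with the set of full subgroups normalised by $H$. It remains to show that a non-torsionfree $H$ normalises at most one full subgroup.

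This uniqueness is the main step, and I would argue it using the standing property, from \cref{appendixlemma:characterisations_of_property_M}, that every nontrivial finite subgroup lies in a unique full subgroup. Suppose $H$ normalises a full subgroup $F$ and contains a nontrivial finite-order element $h$, hence a nontrivial finite cyclic subgroup $K = \langle h\rangle$. Since $H$ normalises $F$ and $F$ is finite, the group $KF$ is finite. Its intersection with $\pi$ is finite, hence trivial because $\pi$ is torsionfree, so $KF$ injects into $G$ and therefore equals the full subgroup $F$. Thus $K \subset F$. If $H$ normalised a second full subgroup $F'$, the same argument would give $K \subset F'$, and uniqueness then forces $F = F'$. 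So $S_H$ has at most one point, and exactly one precisely when $H \in \normfam$.

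The remaining care point is the bookkeeping with the representatives $\maximal$. The identification of $S_H$ must run through full subgroups up to conjugacy, so each full subgroup normalised by $H$ contributes exactly one fixed coset. Equivalently, the normaliser of a full subgroup $F$ must have $F' = gFg^{-1} = F$ exactly when $g \in N_\orbifoldfundamentalgroup F$, which is tautological. Once this is checked, $P^H$ matches $(E_{\normfam}\orbifoldfundamentalgroup)^H$ for every $H$, and the square is a pushout.
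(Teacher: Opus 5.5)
Your proposal is correct and follows essentially the same route as the paper. You detect the pushout on $H$-fixed points, identify the $H$-fixed points of $\coprod_{F \in \maximal} \orbifoldfundamentalgroup/N_\orbifoldfundamentalgroup F$ with the full subgroups normalised by $H$, and obtain uniqueness because a nontrivial finite subgroup of $H$ lies in every such full subgroup, combined with property (1) of \cref{appendixlemma:characterisations_of_property_M}. The only difference is that you reprove inline, via $KF \cap \pi = 1$, the fact that finite subgroups of $N_\orbifoldfundamentalgroup F$ lie in $F$, which the paper cites from \cref{appendixlem:group_theory}.
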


\begin{proof}
	We may compute that the square is a pushout after passing to $H$-fixed points for all subgroups $H \subset \orbifoldfundamentalgroup$. If $H \in \torsionfree$ or $H \notin \normfam$, this is clear. If $H \in \normfam$ but $H \notin \torsionfree$, then after applying $(-)^{H}$ the upper horizontal map in \cref{eq:pushout_for_normaliser_family} is an equivalence, and the lower right corner is contractible. Hence, we have to show that the lower left corner in \cref{eq:pushout_for_normaliser_family} has exactly one $H$-fixed point.  Suppose that $HgN_\orbifoldfundamentalgroup F = gN_\orbifoldfundamentalgroup F$. Then $H \subset gN_\orbifoldfundamentalgroup F g^{-1} = N_\orbifoldfundamentalgroup gFg^{-1}$. In other words, the set of $H$-fixed points in question is in bijection with the set of full subgroups $\widetilde{F}$ with $H \subset N_\orbifoldfundamentalgroup \widetilde{F}$.
	
	Since $H \in \normfam$ is not torsionfree, this set is non-empty. To show that it has at most one point, observe that $H$ contains a finite subgroup $H_\finite$.  If $H \subset N_\orbifoldfundamentalgroup \widetilde{F}$, then $H_\finite \subset \widetilde{F}$ by \cref{appendixlem:group_theory}. But $H_\finite$ is contained in exactly one full subgroup by \cref{appendixlemma:characterisations_of_property_M}.
\end{proof}

\begin{thm}
	\label{thm:tateK_computation}
	Suppose $\nielsspace$ admits a good decomposition \cref{eq:good_decomposition} and that $\orbifoldfundamentalgroup$ is a Farrell--Jones group without virtually cyclic subgroups of type II. Then the comparison map induces an equivalence
	\begin{equation}
		\tateK(Q,\partial Q) \xrightarrow{\simeq} \nctateK(Q,\partial Q).
	\end{equation}
\end{thm}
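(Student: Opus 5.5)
We reduce to \cref{lem:tateK_criterion} applied to $(Y,A)=(Q,\partial Q)$. We must check that $\ncKthy_0(\partial Q)/\Kthy_0(\partial Q)\to \ncKthy_0(Q)/\Kthy_0(Q)$ is an isomorphism and that $\ncKthy_i(\partial Q)\to\ncKthy_i(Q)$ is an isomorphism for $i\le -1$. Both statements only depend on $1$-types (\cref{obs:tateK_criterion_only_depends_on_1_type}), so we translate them into equivariant statements over $\orbifoldfundamentalgroup$. Since $C$ and $\partial C$ are free $G$-spaces, we have $Q\simeq \widehat{C}_{h\orbifoldfundamentalgroup}$ and $\partial Q\simeq \widehat{\partial C}_{h\orbifoldfundamentalgroup}$, with $\widehat{C},\widehat{\partial C}$ having isotropy in $\torsionfree$ (the isotropy is contained in $\pi$, up to the free $G$-part). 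By Davis--L\"uck, $E(Q)\simeq H^\orbifoldfundamentalgroup(\widehat C;E)$ for $E\in\{\Kthy,\ncKthy\}$, and similarly for $\partial Q$. Define the cofibre functor $D(-)\coloneqq \cofib(H^\orbifoldfundamentalgroup(-;\Kthy)\to H^\orbifoldfundamentalgroup(-;\ncKthy))$. In degrees $\le 0$, the two conditions then say exactly that $D(\widehat{\partial C})\to D(\widehat C)$ is an isomorphism on $\pi_{\le 0}$ (cf.\ the computation of $\pi_*\ncKthy/\Kthy$ in the proof of \cref{lem:tateK_criterion}).

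By \cref{lem:Khty_and_torsionfree_stuff}, $D$ vanishes on $\Gamma$-spaces with torsionfree isotropy, hence $D(\widehat C)\simeq D(\widehat{\partial C})\simeq 0$. So the claim looks trivial at first, but we must be careful: the statement of \cref{lem:tateK_criterion} concerns $\ncKthy_0/\Kthy_0$ of the spaces themselves, and these are computed by $H^\orbifoldfundamentalgroup(\widehat{C};-)$ only if the Davis--L\"uck identification $E(\widehat C_{h\orbifoldfundamentalgroup})\simeq H^\orbifoldfundamentalgroup(\widehat C;E)$ holds. That identification is false in general; it is precisely an assembly statement. So the real plan is as follows. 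First, write $\ncKthy(Q)\simeq H^{\orbifoldfundamentalgroup}(\widehat{C};\ncKthy)$ up to the Farrell--Jones assembly error, using that $\widehat C\to E_\finite\orbifoldfundamentalgroup$ is $\pi$--$\pi$ relevant only through $\pi_1(Q)\cong\orbifoldfundamentalgroup'$, where $\orbifoldfundamentalgroup'=\pi_1(Q)$ is a torsionfree subquotient-type group acting through free orbits. Concretely, $\pi_1$ of each component of $Q$ and $\partial Q$ is a torsionfree subgroup of $\orbifoldfundamentalgroup$ (namely $\orbifoldfundamentalgroup$ acts freely on the universal covers), hence a Farrell--Jones torsionfree group. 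For such groups, \cref{lem:Khty_and_torsionfree_stuff} with $X=\ast$ gives $\ncKthy_i=0$ for $i\le -1$ and $\ncKthy_0=\Kthy_0=\bbZ$ per component.

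Hence both sides of both conditions vanish, provided we know the fundamental groups of the components of $Q$ and $\partial Q$ are torsionfree. This is the step needing care. Since $C$ is a free $G$-space with $C^e\to \nielsspace^e$ a $\pi$--$\pi$-equivalence, $\pi_1(C^e)\cong\pi$, and $\pi_1(Q)$ sits in $1\to\pi\to\pi_1(Q)\to G$, mapping injectively into $\orbifoldfundamentalgroup$. Any torsion element of $\pi_1(Q)$ would fix a point of $E_\finite\orbifoldfundamentalgroup$ lying over $\widehat C$, contradicting freeness of $\widehat C$ off $\widehat{\nielsspace^G}$. For $\partial Q$, components have $\pi_1$ an extension of $\pi_1$ of a component of $\nielsspace^G$ (a Weyl-type group $W_\orbifoldfundamentalgroup F$ lift) by the fibre's $\pi_1$; the main obstacle is to argue these are torsionfree subgroups of $\orbifoldfundamentalgroup$. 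Again the argument is freeness of $\widehat{\partial C}$: the map $\widehat{\partial C}\to E_\finite\orbifoldfundamentalgroup$ identifies $\pi_1$ of the components of $\partial Q$ with isotropy-free stabilisers, which are subgroups of $\orbifoldfundamentalgroup$ meeting no finite subgroup nontrivially. Given this, the subgroups are Farrell--Jones by \cref{recollect:Farrell_Jones_groups}, and \cref{lem:tateK_criterion} concludes the proof.
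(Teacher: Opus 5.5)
\textbf{The proposal has a genuine gap.}

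The step that fails is your claim that the components of $Q$ and $\partial Q$ have torsionfree fundamental groups. In fact the opposite holds. Since $C$ is $G$-free and $C^{e}\to\nielsspace^{e}$ is a $\pi$--$\pi$-equivalence, $Q\simeq C^{e}_{hG}$ has $\pi_1(Q)\cong\orbifoldfundamentalgroup$. The map $\pi_1(Q)\to G$ is surjective, and the resulting extension is exactly the one defining $\orbifoldfundamentalgroup$. By \cref{appendixlemma:computing_fixed_points}, the components of $\partial Q$ have fundamental groups $N_\orbifoldfundamentalgroup F$ for $F\in\maximal$, and each of these contains $F\cong G$.

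Your argument that a torsion element would have to fix a point over $\widehat C$ confuses deck transformations with isotropy. The space $\widehat C$ is a free $\orbifoldfundamentalgroup$-space with simply connected underlying space, so $\orbifoldfundamentalgroup$, torsion included, is precisely the fundamental group of the quotient. The lens-space fibres of $\partial Q\to\nielsspace^{G}$ are the basic instance of this. The fixed points of finite subgroups in $E_\finite\orbifoldfundamentalgroup$ lie over $\widehat{\nielsspace^G}$, and nothing forces them over $\widehat C$.

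Consequently the groups in \cref{lem:tateK_criterion} do not vanish. Condition (1) asks that $\bigoplus_{F\in\maximal}\reducedK_0(\bbZ N_\orbifoldfundamentalgroup F)\to\reducedK_0(\bbZ\orbifoldfundamentalgroup)$ be an isomorphism. Condition (2) asks the same of $\bigoplus_{F\in\maximal}\ncKthy_i(\bbZ N_\orbifoldfundamentalgroup F)\to\ncKthy_i(\bbZ\orbifoldfundamentalgroup)$ for $i\le -1$. These groups are nonzero in general. For example, if some $W_\orbifoldfundamentalgroup F$ is trivial then $N_\orbifoldfundamentalgroup F=F\cong C_p$, and $\reducedK_0(\bbZ C_{23})$, the class group of $\bbZ[\zeta_{23}]$, is nonzero. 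A telltale sign is that you never use the absence of type II virtually cyclic subgroups, nor the fact that each nontrivial finite subgroup lies in a unique full subgroup.

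What is missing is the actual comparison between $\orbifoldfundamentalgroup$ and the normalisers of full subgroups, which is the degree $\le 0$ analogue of \cref{thm:whitehead_groups}. The paper's proof runs as follows.
\begin{enumerate}
\item Via \cref{obs:tateK_criterion_only_depends_on_1_type}, pass to the pair $(B\orbifoldfundamentalgroup,\coprod_{F\in\maximal} BN_\orbifoldfundamentalgroup F)$.
\item Use \cref{lem:insensitivity_of_connective_k_theory_to_isotropy_in_nonpositive_degrees} to replace $H^\orbifoldfundamentalgroup(T;\Kthy)$ by $H^\orbifoldfundamentalgroup(T\times E_\torsionfree\orbifoldfundamentalgroup;\Kthy)$ in degrees $\le 0$.
\item Use \cref{lem:Khty_and_torsionfree_stuff} to identify the latter with $\ncKthy$-homology. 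This is the only place where your observation that $D$ vanishes on torsionfree isotropy legitimately enters.
\item Apply $H^\orbifoldfundamentalgroup(-;\ncKthy)$ to the pushout of \cref{lem:pushout_for_normaliser_family}.
\item Invoke the Farrell--Jones conjecture for the family $\normfam$ via \cref{lem:Farrell_Jones_for_larger_families}. This needs $\vcyc\subset\normfam$, which holds by \cref{obs:normfam_and_vcyc} precisely because there are no type II subgroups.
\item Use injectivity of $\Kthy_*\to\ncKthy_*$ in degrees $\le 0$, via the augmentation, to turn the pushout into an isomorphism on cokernels.
\end{enumerate}
Without this comparison, your argument only shows that the sources of the assembly maps have vanishing $D$-terms. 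It does not address the groups that \cref{lem:tateK_criterion} is actually about.
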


\begin{proof}
	For this proof, we write $S$ for the $\orbifoldfundamentalgroup$-set $\coprod_{F \in \maximal} \orbifoldfundamentalgroup/N_\orbifoldfundamentalgroup F$. Observe that there is a $\pi$--$\pi$-equivalence $(Q,\partial Q) \rightarrow (\orbifoldfundamentalgroup/\orbifoldfundamentalgroup_{h\orbifoldfundamentalgroup},S_{h\orbifoldfundamentalgroup})$ by \cref{appendixlemma:computing_fixed_points}. Consider the following square.
	\begin{equation}
		\label{diag:Kthy_computation_square}
		\begin{tikzcd}
			H^\orbifoldfundamentalgroup_\ast(S; \Kthy) \ar[r] \ar[d] & H^\orbifoldfundamentalgroup_\ast(\orbifoldfundamentalgroup/\orbifoldfundamentalgroup;\Kthy) \ar[d] \\
			H^\orbifoldfundamentalgroup_\ast(S;\ncKthy) \ar[r] & H^\orbifoldfundamentalgroup_\ast(\orbifoldfundamentalgroup/\orbifoldfundamentalgroup;\ncKthy)
		\end{tikzcd}
	\end{equation}
	By \cref{lem:tateK_criterion} and \cref{obs:tateK_criterion_only_depends_on_1_type} it suffices to show that for $\ast \leq 0$, the map induced on vertical cokernels in \cref{diag:Kthy_computation_square} is an isomorphism.  We can precompose these vertical maps by the maps induced by the projections $E_\torsionfree \orbifoldfundamentalgroup \rightarrow \orbifoldfundamentalgroup/\orbifoldfundamentalgroup$ and $E_\torsionfree \orbifoldfundamentalgroup \times S \rightarrow S$. For $T = S$ resp. $T= \orbifoldfundamentalgroup/\orbifoldfundamentalgroup$, the composite
	\begin{equation}
		\label{eq:trick_with_torsionfree_family}
		H_\ast^\orbifoldfundamentalgroup(T \times E_\torsionfree \orbifoldfundamentalgroup; \Kthy) \rightarrow H^\orbifoldfundamentalgroup_\ast(T ;\Kthy) \rightarrow H^\orbifoldfundamentalgroup_\ast(T;\ncKthy)
	\end{equation}
	has the same image for $\ast \leq 0$ as left resp. right vertical map in \cref{diag:Kthy_computation_square} by \cref{lem:insensitivity_of_connective_k_theory_to_isotropy_in_nonpositive_degrees}. With this observation in mind, we factor the map \cref{eq:trick_with_torsionfree_family} to draw the following diagram.
	\begin{equation}
		\label{diag:bigger_kthy_computation_diagram}
		\begin{tikzcd}
			H^\orbifoldfundamentalgroup_\ast(S \times E_\torsionfree \orbifoldfundamentalgroup; \Kthy) \ar[r] \ar[d] & H^\orbifoldfundamentalgroup_\ast(E_\torsionfree \orbifoldfundamentalgroup; \Kthy) \ar[d] \\
			H^\orbifoldfundamentalgroup_\ast(S \times E_\torsionfree \orbifoldfundamentalgroup; \ncKthy) \ar[r] \ar[d] & H^\orbifoldfundamentalgroup_\ast(E_\torsionfree \orbifoldfundamentalgroup; \ncKthy) \ar[d] \\
			H^\orbifoldfundamentalgroup_\ast(S; \ncKthy) \ar[r] \ar[dr] & H^\orbifoldfundamentalgroup_\ast(E_\normfam \orbifoldfundamentalgroup; \ncKthy) \ar[d, "q"] \\
			& H^\orbifoldfundamentalgroup_\ast(\orbifoldfundamentalgroup/\orbifoldfundamentalgroup;\ncKthy)
		\end{tikzcd}
	\end{equation}
	We argued that for $\ast \leq 0$ the image of the vertical composites in \cref{diag:bigger_kthy_computation_diagram} agrees with the image of the vertical maps in \cref{diag:Kthy_computation_square}. By \cref{lem:Khty_and_torsionfree_stuff}, the vertical maps in the upper square are isomorphisms. Using the assumption that $\orbifoldfundamentalgroup$ contains no type II virtually cyclic subgroups, \cref{obs:normfam_and_vcyc} shows that we have an inclusion of families $\vcyc \subset \normfam$.
	As $\orbifoldfundamentalgroup$ is Farrell--Jones group, \cref{lem:Farrell_Jones_for_larger_families} implies that the map labelled $q$ is an isomorphism as well. The middle square in \cref{diag:bigger_kthy_computation_diagram} is induced by passing to homotopy groups in a pushout of spectra: it is given by applying the $\orbifoldfundamentalgroup$-homology theory $H^\orbifoldfundamentalgroup(-;\ncKthy)$ to the pushout \cref{lem:pushout_for_normaliser_family}. To deduce that the induced maps on vertical cokernels of homotopy groups are isomorphisms, it suffices to show that the vertical maps are injective on homotopy groups for $\ast \leq 0$. To see this, we unravel the equivalent vertical maps in \cref{diag:Kthy_computation_square} instead. They identify with the maps
	\begin{equation}
		\bigoplus_{F \in \maximal} \Kthy_{\ast}(\module{\bbZ N_{\orbifoldfundamentalgroup} F}^{\finitemodule}) \rightarrow \bigoplus_{F \in \maximal} \ncKthy_{\ast}(\module{\bbZ N_\orbifoldfundamentalgroup F}^{\finitemodule}) \hspace{3mm} \text{and} \hspace{3mm} \Kthy_\ast(\module{\bbZ \orbifoldfundamentalgroup}^\finitemodule) \rightarrow \ncKthy_\ast(\module{\bbZ\orbifoldfundamentalgroup}^\finitemodule).
	\end{equation}
	Both maps are injective for $\ast \leq 0$. Indeed, for any discrete group $V$, we can use the augmentation map $\bbZ V \rightarrow \bbZ$ to construct the composite
	\begin{equation}
		\Kthy_\ast (\module{\bbZ V}^\finitemodule) \rightarrow \ncKthy_\ast (\module{\bbZ V}^\finitemodule) \rightarrow \ncKthy_\ast(\module{\bbZ}^\finitemodule)
	\end{equation}
	and this composite is an isomorphism for $\ast \leq 0$, proving injectivity of the first map.
\end{proof}

The results \cref{thm:periodic_karoubi_structure_spectrum_vanishing} and \cref{thm:tateK_computation} are exactly the assumptions for \cref{lem:when_structure_spectra_are_truncated_L_theory} to apply, allowing us to deduce the following.

\begin{cor}
	\label{cor:structure_spectrum_assembly_map_is_isomorphism}
	Suppose that $\nielsspace$ admits a good decomposition with complement-quotient $(Q,\partial Q)$, and that $\orbifoldfundamentalgroup$ is a Farrell--Jones group without virtually cyclic subgroups of type II. Then the structure spectrum assembly map	
	\begin{equation}
		(\coconnquadL(\ast)) \otimes (Q,\partial Q) \xrightarrow{\simeq} \structure(Q,\partial Q)
	\end{equation}
	is an equivalence. In particular, if $(Q,\partial Q)$ is $d$-dimensional, $\Omega^{\infty +d+1} \structure(Q,\partial Q) = \ast$.
\end{cor}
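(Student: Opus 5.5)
The equivalence follows by applying \cref{lem:when_structure_spectra_are_truncated_L_theory} to $(X,A)=(Q,\partial Q)$, so we need its two hypotheses. Hypothesis (1), the vanishing of $\structurekarper(Q,\partial Q)$, is exactly \cref{thm:periodic_karoubi_structure_spectrum_vanishing}. One caveat: that theorem was stated for finite good decompositions, while the corollary only assumes a good decomposition. I would check that finiteness enters its proof only through $(Q,\partial Q)$ being a pushout of $\orbifoldfundamentalgroup$-spaces, to which the homology theory $H^\orbifoldfundamentalgroup(-;\karLthy)$ is applied. That is colimit-preserving and so indifferent to finiteness. Otherwise I would simply add finiteness as a standing hypothesis.

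For hypothesis (2), the map $\quadL(Q,\partial Q)\to\quadkarL(Q,\partial Q)$ should be an equivalence. \cref{prop:karoubi_invariant_L_theory_computation}, evaluated on the Poincar\'e category $(\module{\bbZ}^\finitemodule,\quadratic)_{(Q,\partial Q)}$, gives a pullback square. Hence the fibre of $\quadL(Q,\partial Q)\to\quadkarL(Q,\partial Q)$ agrees with the fibre of $\tateK(Q,\partial Q)\to\nctateK(Q,\partial Q)$. The latter vanishes by \cref{thm:tateK_computation}, whose hypotheses are exactly those of the corollary. So both hypotheses hold, and the structure spectrum assembly map is an equivalence.

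For the connectivity claim, I would note that $\coconnquadL(\ast)$ is coconnective, i.e.\ concentrated in degrees $\le 0$. The spectrum $(\coconnquadL(\ast))\otimes(Q,\partial Q)$ is the cofibre of $\coconnquadL(\ast)\otimes\Sigma^\infty_+\partial Q\to\coconnquadL(\ast)\otimes\Sigma^\infty_+ Q$. Since $(Q,\partial Q)$ is a finite Poincar\'e pair of dimension $d$, I would replace it up to equivalence by a relative CW pair with cells of dimension at most $d$. This is the step needing care: Poincar\'e duality gives cohomological dimension $d$, and Wall's results give a $d$-dimensional model since $d\ge 3$. A cell filtration, or the Atiyah--Hirzebruch spectral sequence, then shows the homotopy groups of this tensor product vanish above degree $d$.

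Therefore $\pi_k\structure(Q,\partial Q)=0$ for $k\ge d+1$. It follows that $\Omega^{\infty+d+1}\structure(Q,\partial Q)$ is a connected infinite loop space with vanishing homotopy groups, hence contractible.
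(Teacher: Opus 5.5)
Your proposal is correct and matches the paper's proof: the two hypotheses of \cref{lem:when_structure_spectra_are_truncated_L_theory} come from \cref{thm:periodic_karoubi_structure_spectrum_vanishing} (whose proof indeed never uses finiteness) and, via the pullback square of \cref{prop:karoubi_invariant_L_theory_computation}, from \cref{thm:tateK_computation}. For the addendum you do not need a $d$-dimensional cell model, and as you set it up that step requires $d\geq 3$, which the statement does not assume. Poincar\'e--Lefschetz duality already gives $H_p(Q,\partial Q;A)=0$ for $p>d$ and all coefficients $A$. The Atiyah--Hirzebruch spectral sequence for the coconnective spectrum $\coconnquadL(\ast)$ then kills all homotopy above degree $d$, which is how the paper argues in \cref{cor:computation_of_structure_spectra}.
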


Our method to identify $\structure(Q,\partial Q)$ can be used to compute some Whitehead groups and reduced projective class groups of integral group rings, which might be of independent interest. 

\begin{proof}[Proof of \cref{thm:whitehead_groups}]
	Consider the following diagram.
	\begin{equation}
		\begin{tikzcd}
			\coprod_{F \in \maximal} \Gamma/N_\Gamma F \times E \Gamma \ar[r] \ar[d, "q"] & E \Gamma \ar[d, "p"]\\
			\coprod_{F \in \maximal} \Gamma/N_\Gamma F \times E_\torsionfree \Gamma \ar[r] \ar[d] & E_\torsionfree \Gamma \ar[d]\\
			\coprod_{F \in \maximal} \Gamma/ N_\Gamma F \ar[r] & E_{\normfam} \Gamma
		\end{tikzcd}
	\end{equation}
	The map $H^\Gamma(\Gamma/H \times E \Gamma;\ncKthy) \rightarrow H^\Gamma(\Gamma/H;\ncKthy)$ is an equivalence whenever $H$ is a torsionfree subgroup by the Farrell--Jones conjecture for the regular ring $\bbZ$ \cite[Thm. 13.65(ii)]{lueckIC}.
	Hence, upon applying $H^\Gamma(-;\ncKthy)$, the maps labelled $p$ and $q$ become equivalences. Since the lower square is a pushout, this implies that $H^\Gamma(-;\ncKthy)$ sends the outer square to a pushout. Since $\vcyc \subset \normfam$, the map $H^\Gamma(E_\normfam \Gamma;\ncKthy) \rightarrow H^\Gamma(\Gamma/\Gamma;\ncKthy)$ is an equivalence. Unraveling, this yields that the following diagram of assembly maps
	\begin{equation}
		\label{eq:diag_whitehead_group_computations}
		\begin{tikzcd}
			\bigoplus_{F \in \maximal} BN_\Gamma F \otimes  \ncKthy(\bbZ)  \ar[r] \ar[d] &  B\Gamma \otimes \ncKthy(\bbZ)  \ar[d]\\
			\bigoplus_{F \in \maximal} \ncKthy(\bbZ N_\Gamma F) \ar[r] & \ncKthy(\bbZ \Gamma)
		\end{tikzcd}
	\end{equation}
	is a pushout. In particular, the map induced on vertical cofibres is an equivalence. Applying $\pi_1$ to this equivalence yields the claim about Whitehead groups, and applying $\pi_0$ yields the claim about reduced projective class groups.
\end{proof}

\begin{rmk}
	In the diagram \cref{eq:diag_whitehead_group_computations}, the ring $\bbZ$ can be replaced by a regular ring $R$, and it is still a pushout. Indeed, the additional input is that if $V$ is torsionfree, then the assembly map $BV \otimes \ncKthy(R) \rightarrow \ncKthy(RV)$ is an equivalence \cite[Thm. 13.65(ii)]{lueckIC}, so the proof applies verbatim. 
\end{rmk}

\subsection{The structure spectrum assembly map and injectivity of the transfer}

In this subsection we concern ourselves with proving the desired injectivity of the transfer map
\begin{equation}
	\pi_d \structure(Q,\partial Q) \rightarrow \pi_d\structure(C^{e},\partial C^{e}).
\end{equation}
Here $d$ is the dimension of the complement-quotient $(Q,\partial Q)$ as a Poincar\'e duality pair, so the degree in which the element $\tso(Q,\partial Q)$ lives. Our method is to exploit that the structure spectrum assembly map is an equivalence by \cref{cor:structure_spectrum_assembly_map_is_isomorphism}, and to compare with the transfer on singular homology. Recall that $\pi_0 \quadL(\ast) \simeq \bbZ$ induced by dividing the signature of a nondegenerate quadratic form over the integers by $8$. Using this isomorphism, we get a map of spectra
\begin{equation}
	\label{eq:map_from_singular_homology_to_coconnective_Lthy}
	\bbZ \simeq \tau_{\geq 0}(\tau_{\leq 0} \quadL(\ast)) \xrightarrow{u} \tau_{\leq 0} \quadL(\ast). 
\end{equation}
This produces a map of functors $\bbZ \otimes (-) \rightarrow \tau_{\leq 0} \quadL(\ast) \otimes (-)$ in $\func(\spcpairtrans,\spectra)$. Note that $\pi_i (\bbZ \otimes (Y,A))$ computes the singular homology $H_i(Y,A;\bbZ)$. Since $(Q,\partial Q)$ has vanishing singular homology in degrees $\ast \geq d+1$ by Poincar\'e--Lefschetz duality, a simple computation with the Atiyah-Hirzebruch spectral sequence proves the following.

\begin{cor}
	\label{cor:computation_of_structure_spectra}
	Under the assumptions of \cref{cor:structure_spectrum_assembly_map_is_isomorphism}, the structure spectrum of the complement-quotient $(Q,\partial Q)$ satisfies
	\begin{equation}
		\Omega^{\infty + d}\structure(Q,\partial Q) \simeq \Omega^{\infty + d}(\bbZ \otimes (Q,\partial Q)).
	\end{equation}
	In particular, $\pi_d \structure(Q,\partial Q) \cong H_d(Q,\partial Q)$ is an infinite cyclic group, and $\Omega^{\infty + d+1} \structure(Q,\partial Q) \simeq \ast$.
\end{cor}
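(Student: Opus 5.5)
By \cref{cor:structure_spectrum_assembly_map_is_isomorphism}, the structure spectrum assembly map $(\coconnquadL(\ast)) \otimes (Q,\partial Q) \to \structure(Q,\partial Q)$ is an equivalence. It therefore suffices to compare $E \otimes (Q,\partial Q)$ for $E = \coconnquadL(\ast)$ with $\bbZ \otimes (Q,\partial Q)$ via the map $u$ of \cref{eq:map_from_singular_homology_to_coconnective_Lthy}. The plan is to show that $u \otimes (Q,\partial Q)$ induces an equivalence after applying $\Omega^{\infty+d}$, i.e.\ on $\pi_i$ for all $i \geq d$ (together with the vanishing above degree $d$).

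First, the fibre of $u \colon \bbZ \to \tau_{\leq 0}\quadL(\ast)$. Since $\bbZ = \tau_{\geq 0}\tau_{\leq 0}\quadL(\ast)$ and $\pi_0\quadL(\ast) \cong \bbZ$, the map $u$ is the connective cover. Its cofibre $K$ is therefore concentrated in degrees $\leq -1$. Its homotopy is $\pi_{-k}$ of quadratic $L$-theory of $\bbZ$ for $k \geq 1$, namely $0, \bbZ/2, 0, \bbZ, \dots$ by $4$-periodicity, though only the coconnectivity $K \in \spectra_{\leq -1}$ matters.

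Next, the key step is to show $\pi_i(K \otimes (Q,\partial Q)) = 0$ for $i \geq d-1$. This gives that $\pi_i$ of $\bbZ \otimes (Q,\partial Q) \to E \otimes (Q,\partial Q)$ is an isomorphism for $i \geq d$, via the long exact sequence of the cofibre sequence $\bbZ \otimes (Q,\partial Q) \to E\otimes(Q,\partial Q) \to K \otimes(Q,\partial Q)$, where injectivity in degree $d$ uses $\pi_{d}(K\otimes(Q,\partial Q))=0$ and surjectivity uses $\pi_{d+1}$, etc. I would use the Atiyah--Hirzebruch spectral sequence
\[ E^2_{s,t} = H_s(Q,\partial Q;\pi_t K) \Rightarrow \pi_{s+t}(K\otimes(Q,\partial Q)). \]
Here $t \leq -1$ and $s \leq d$, because $(Q,\partial Q)$ is a finite Poincar\'e pair of dimension $d$ (finiteness of the complement from the good decomposition). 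The homology with arbitrary coefficients vanishes above $d$: by Poincar\'e--Lefschetz duality with twisted coefficients, $H_s(Q,\partial Q;A) \cong H^{d-s}(Q;A^w)$, which vanishes for $s>d$. Hence $s+t \leq d-1$. To obtain vanishing in degree $d-1$ as well, I will instead observe that the AHSS only needs $\pi_i(K\otimes(Q,\partial Q))=0$ for $i \geq d$, which holds since $s+t \leq d-1$, plus surjectivity in degree $d$ on the $\bbZ$-side. Surjectivity in degree $d$ comes from $\pi_d(K\otimes(Q,\partial Q))=0$, and injectivity from $\pi_{d+1}(K\otimes(Q,\partial Q))=0$. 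Using the sequence $\pi_{i+1}K\otimes \to \pi_i\bbZ\otimes \to \pi_i E\otimes \to \pi_i K\otimes$, both vanish, so the map is an isomorphism for all $i \geq d$. Convergence of the AHSS is unproblematic since $(Q,\partial Q)$ is finite-dimensional.

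Finally, $\Omega^{\infty+d}$ only sees $\pi_i$ for $i \geq d$, so $\Omega^{\infty+d}(\bbZ\otimes(Q,\partial Q)) \simeq \Omega^{\infty+d}\structure(Q,\partial Q)$. Then $\pi_d\structure(Q,\partial Q) \cong H_d(Q,\partial Q;\bbZ)$, which is infinite cyclic because $(Q,\partial Q)$ is an oriented connected $d$-dimensional Poincar\'e pair. Orientability is implicitly assumed, as in the definition of $\structure$; $Q$ is connected since $\partial C^e \to \nielsspace^G$ is $\pi$--$\pi$ and $\nielsspace$ is connected. The vanishing $H_i = 0$ for $i>d$ gives $\Omega^{\infty+d+1}\structure \simeq \ast$. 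The main subtlety I anticipate is the justification of homological dimension $d$ for $(Q,\partial Q)$ with the nonconstant coefficient systems appearing in the AHSS, which I handle by twisted Poincar\'e--Lefschetz duality.
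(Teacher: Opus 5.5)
Your proof is correct and follows the paper's own route. The paper also uses the assembly equivalence of \cref{cor:structure_spectrum_assembly_map_is_isomorphism} to reduce to comparing $\bbZ\otimes(Q,\partial Q)$ with $(\coconnquadL(\ast))\otimes(Q,\partial Q)$ along $u$, and then runs the Atiyah--Hirzebruch spectral sequence for the $(-1)$-coconnective cofibre $K$, using that $H_s(Q,\partial Q)$ vanishes for $s>d$ by Poincar\'e--Lefschetz duality. Your final bookkeeping is the right one: surjectivity in degree $d$ comes from $\pi_d(K\otimes(Q,\partial Q))=0$ and injectivity from $\pi_{d+1}$. You can drop the earlier reversed version and the concern about twisted coefficients, since the coefficients $\pi_tK$ in this spectral sequence are constant.
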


The addendum $\Omega^{\infty + d+1} \structure(Q,\partial Q) \simeq \ast$ is interesting for the purpose of computing block homeomorphism groups. Our main application only concerns $\pi_d\structure(Q,\partial Q)$.

\begin{thm}
	\label{thm:transfer_injectivity}
	Suppose that $\nielsspace$ admits a finite good decomposition with complement-quotient $(Q,\partial Q)$ a $d$-dimensional Poincar\'e pair, and that $\orbifoldfundamentalgroup$ is a Farrell--Jones group that does not contain any virtually cyclic subgroups of type II. Then the transfer map
	\begin{equation}
		\pi_d \structure(Q,\partial Q) \rightarrow \pi_d \structure(C^{e},\partial C^{e})
	\end{equation}
	is injective.
\end{thm}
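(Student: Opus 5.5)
We plan to compare both sides with singular homology via the structure spectrum assembly map, and to use naturality of assembly with respect to transfers. The map $\asm$ from \cref{defn:approximating_functor_from_left_by_excisive} is a natural transformation in $\func(\spcpairtrans,\spectra)$. We apply it to the covering morphism $(Q,\partial Q) \leftarrow (C^{e},\partial C^{e})$, viewed as a morphism $(Q,\partial Q) \rightarrow (C^e,\partial C^e)$ in $\spcpairtrans$. This gives a commuting square whose top row is the transfer
\begin{equation*}
	(\coconnquadL(\ast)) \otimes (Q,\partial Q) \rightarrow (\coconnquadL(\ast)) \otimes (C^{e},\partial C^{e})
\end{equation*}
and whose bottom row is the transfer on $\structure(-)$. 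By \cref{cor:structure_spectrum_assembly_map_is_isomorphism}, the left vertical map is an equivalence. It therefore suffices to show that the top-row composite with the right assembly map is injective on $\pi_d$, and it is enough to check injectivity after any further map out of $\pi_d\structure(C^e,\partial C^e)$.

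Next we pass to homology. The map $u$ of \cref{eq:map_from_singular_homology_to_coconnective_Lthy} gives a natural transformation $\bbZ\otimes(-) \rightarrow \coconnquadL(\ast)\otimes(-)$ in $\func(\spcpairtrans,\spectra)$, compatible with transfers. For any pair $(Y,A)$ with $H_i(Y,A)=0$ for $i>d$, the Atiyah--Hirzebruch spectral sequence shows that $H_d(Y,A;\bbZ) \rightarrow \pi_d(\coconnquadL(\ast)\otimes(Y,A))$ is an isomorphism. Indeed, the coefficients of $\coconnquadL(\ast)$ are concentrated in degrees $\leq 0$, and $\pi_0 = \bbZ$ via $u$, so the only contribution in total degree $d$ with nonzero differentials out of it is $E^2_{d,0}$, the top row. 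Both $(Q,\partial Q)$ and $(C^e,\partial C^e)$ are $d$-dimensional Poincar\'e pairs (the latter is a finite cover of the former), so this applies to both. Via these isomorphisms, the top-row transfer on $\pi_d$ is identified with the homological transfer $H_d(Q,\partial Q;\bbZ) \rightarrow H_d(C^e,\partial C^e;\bbZ)$. This transfer sends the fundamental class $[Q,\partial Q]$ to $[C^e,\partial C^e]$, because under Poincar\'e duality it is dual to pulling back the orientation. In particular it is multiplication by $1$ between infinite cyclic groups (compatibly with \cref{cor:computation_of_structure_spectra}), hence injective.

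It remains to see that the right assembly map $\pi_d(\coconnquadL(\ast)\otimes(C^e,\partial C^e)) \rightarrow \pi_d\structure(C^e,\partial C^e)$ is injective on the image of the transfer. We expect this to be the main obstacle, because \cref{cor:structure_spectrum_assembly_map_is_isomorphism} does not apply to $C^e$: its fundamental groupoid is torsion-free, not $\orbifoldfundamentalgroup$. Our first attempt avoids this by composing further, along $\structure(C^e,\partial C^e) \rightarrow \structure(\nielsspace^e, \nielsspace^{G}\times G)$-type maps. Alternatively, and more simply, we compose with the natural map $\structure(Y,A) \rightarrow \Sigma\,\connquadL(\ast)\otimes(Y,A)$. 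Unwinding the definition, the composite $\coconnquadL(\ast)\otimes(Y,A) \rightarrow \structure(Y,A) \rightarrow \Sigma(\connquadL(\ast))\otimes(Y,A)$ is the boundary map of the cofibre sequence $\connquadL\rightarrow\quadL\rightarrow\coconnquadL$, tensored with $(Y,A)$. On $\pi_d$ this map need not be injective, so if that fails we fall back on the Farrell--Jones argument. Namely, $C^e$ has torsion-free fundamental groupoid $\pi$-type, which is Farrell--Jones. Then $\quadL(C^e,\partial C^e) \simeq \quadL(\ast)\otimes(C^e,\partial C^e)$ by $L$-theoretic Farrell--Jones for torsion-free groups together with the vanishing of $\widetilde{\ncKthy}_{\leq 0}$ (cf.\ \cref{lem:Khty_and_torsionfree_stuff}), so that $\structure(C^e,\partial C^e)\simeq \coconnquadL(\ast)\otimes(C^e,\partial C^e)$ and the right vertical map is an isomorphism. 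Combining the three steps gives injectivity.
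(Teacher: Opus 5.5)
Your reduction and your first two steps are sound. The Atiyah--Hirzebruch identification of $\pi_d(\coconnquadL(\ast)\otimes(-))$ with $H_d$ for $d$-dimensional Poincar\'e pairs is the same computation the paper uses. Injectivity of the homological transfer also follows directly from \cref{appendixprop:covering_and_transfer_for_singular_homology}.

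The gap is in your final step. You claim that $\quadL(C^{e},\partial C^{e})\simeq\quadL(\ast)\otimes(C^{e},\partial C^{e})$ follows from $L$-theoretic Farrell--Jones for the torsion-free fundamental group. But Farrell--Jones only concerns the assembly map for classifying spaces, and neither $C^{e}$ nor $\partial C^{e}$ is aspherical: geometrically they are the complement of the fixed set and a sphere bundle over it. Since $\quadL(-)$ only sees $1$-types while $\quadL(\ast)\otimes(-)$ is a homology theory, the assembly map of a non-aspherical space with torsion-free Farrell--Jones fundamental group is in general not an equivalence; $S^2$ is already a counterexample. Your other fallback does not help either. The boundary map $\coconnquadL(\ast)\to\Sigma\connquadL(\ast)$ is rationally zero, so composing with it cannot be injective on the infinite cyclic group $H_d$.

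The conclusion of your last step does hold here, but only because of the good decomposition, which your argument never uses. The repair goes in three moves:
\begin{itemize}
\item Excision along the underlying pushout identifies $\quadL(\ast)\otimes(C^{e},\partial C^{e})$ with $\quadL(\ast)\otimes(\nielsspace^{e},\nielsspace^{G})$.
\item The $\pi$--$\pi$-conditions give $\quadL(C^{e},\partial C^{e})\simeq\quadL(\nielsspace^{e},\nielsspace^{G})$, as in the proof of \cref{lem:tso_substraction_lemma}.
\item The pair $(\nielsspace^{e},\nielsspace^{G})\simeq(B\pi,\coprod_{F}BW_\orbifoldfundamentalgroup F)$ is aspherical. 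Farrell--Jones for the torsion-free groups $\pi$ and $W_\orbifoldfundamentalgroup F\cong N_\orbifoldfundamentalgroup F\cap\pi$ applies to it, together with the $\Lthy$ versus $\karLthy$ comparison.
\end{itemize}

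The paper sidesteps all of this. As you yourself observe, it suffices to check injectivity after a further map out of $\pi_d\structure(C^{e},\partial C^{e})$. The paper takes the pushforward $q_!$ back to $(Q,\partial Q)$. Then \cref{cor:computation_of_structure_spectra} identifies both ends with $H_d(Q,\partial Q)\cong\bbZ$, on which $q_!q^*$ is multiplication by $|G|$. So no knowledge of $\structure(C^{e},\partial C^{e})$ is needed.
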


\begin{proof}
	Let $q \colon (C^{e},\partial C^{e}) \rightarrow (Q,\partial Q)$ denote the covering map. Recall that it is obtained by taking the quotient by the action of the finite group $G$. 
	We use the map $u$ of \cref{eq:map_from_singular_homology_to_coconnective_Lthy} to construct the following commuting diagram.
	\begin{equation}
		\begin{tikzcd}
			\pi_d \bbZ \otimes (Q,\partial Q) \ar[r] \ar[d, "q^\ast"] & \pi_d(\tau_{\leq 0} \quadL(\ast)) \otimes (Q,\partial Q) \ar[r] \ar[d, "q^\ast"]  &\structure(Q,\partial Q) \ar[d, "q^\ast"]  \\
			\pi_d \bbZ \otimes (C^{e},\partial C^{e}) \ar[r] \ar[d, "q_!"]  & \pi_d(\tau_{\leq 0} \quadL(\ast)) \otimes (C^{e},\partial C^{e}) \ar[r] \ar[d, "q_!"] &\structure(C^{e},\partial C^{e}) \ar[d, "q_!"] \\
			\pi_d \bbZ \otimes (Q,\partial Q) \ar[r] & \pi_d(\tau_{\leq 0} \quadL(\ast)) \otimes (Q,\partial Q) \ar[r] &\structure(Q,\partial Q)
		\end{tikzcd}
	\end{equation}
	Here, the maps $q^{\ast}$ denote the transfer map, provided by the wrong way functoriality in finite coverings, and $q_!$ denotes forward functoriality.
	By \cref{cor:computation_of_structure_spectra}, the upper horizontal composite and the lower horizontal composite are isomorphisms. Hence, to prove injectivity of the desired transfer map, it suffices to show that the left vertical composite is injective. By \cref{appendixprop:covering_and_transfer_for_singular_homology}, this composite is given by multiplication with the number of sheets of the covering $q$, which is exactly the order of the finite group $G$. But the group $\pi_d \bbZ \otimes (Q,\partial Q)$ is infinite cyclic, so multiplication with any nonzero number is injective. 
\end{proof}

Before moving on with our construction of group actions on aspherical manifolds, we briefly prove our application to block homeomorphisms of topological manifolds. 

\begin{proof}[Proof of \cref{thm:block_borel}]
	We note that the group $\orbifoldfundamentalgroup \coloneqq \pi_1(M_{hG})$ contains the fundamental group of a closed, nonpositively curved manifold as a finite index subgroup, hence it is a Farrell--Jones group. Since $G$ is assumed to have odd order, $\orbifoldfundamentalgroup$ does not contain elements of even order, and hence no virtually cyclic subgroups of type II.
	In total, $(N,\partial N)$ satisfies the assumptions of \cref{cor:computation_of_structure_spectra}, so $\Omega^{\infty + d+1}\structure(N,\partial N) \simeq \ast$.
	
	Write $\structure^s_{\mathrm{geo}}(N,\partial N)$ for the simple geometric structure space of $(N,\partial N)$, and $\structure^h_\mathrm{geo}(N,\partial N)$ for the geometric structure space. There is a map of spaces
	\begin{equation}
		\structure^s_{\mathrm{geo}}(N,\partial N) \rightarrow \structure_{\mathrm{geo}}(N,\partial N)
	\end{equation}
	which is an equivalence, provided that the map induced on Whitehead groups $\whiteheadgroup(\partial N) \rightarrow \whiteheadgroup(N)$ is an isomorphism. This holds in our case by \cref{thm:whitehead_groups}.
	Ranicki's main theorem on the total surgery obstruction identifies $\structure^h_\mathrm{geo}(N,\partial N) \simeq \Omega^{\infty + d + 1} \structure(N,\partial N) \simeq \ast$. Hence $\structure^s_\mathrm{geo}(N,\partial N) \simeq \ast$.
	
	Let $\hAut^s(N,\partial N) \subset \hAut(N,\partial N)$ denote the collection of components on the simple homotopy equivalences: homotopy equivalences $(f,\partial f) \colon (N,\partial N) \rightarrow (N,\partial N)$ such that both $f$ and $\partial f$ are simple. Then, from the $s$-cobordism theorem for manifold triads
	\begin{equation}
		\pi_k \hAut^s(N,\partial N)/ \widetilde{\mathrm{Homeo}}(N,\partial N) \simeq \pi_0 \structure^s_{\mathrm{geo},\partial_0}(N \times D^k,\partial N \times D^k) \simeq \pi_k \structure_{\mathrm{geo}}^{s}(N,\partial N).
	\end{equation}
	Here, $\partial_0$ is the pair $(N \times S^{k-1},\partial N \times S^{k-1})$, and $\pi_0 \structure^s_{\mathrm{geo},\partial_0}(N \times D^k,\partial N \times D^k)$ is Wall's simple structure set of the triad $(N \times D^k, \partial N \times D^k , N \times S^{k-1})$ relative to $N \times S^{k-1}$.
\end{proof}

\section{Gluing}

\label{sec:Gluing}

Suppose given a manifold pair $(\overline{N},\partial \overline{N})$ with a free action by a finite group $G$, and a manifold $L$ with a trivial $G$-action. Suppose further given an equivariant map $p \colon \partial \overline{N} \rightarrow L$. In the (homotopy) pushout of $G$-spaces
\begin{equation}
	\label{eq:pushout_gluing}
	\begin{tikzcd}
		\partial \overline{N} \ar[r] \ar[d, "p"] & \overline{N} \ar[d] \\
		L \ar[r] & X
	\end{tikzcd}
\end{equation}
the $G$-space $X$ is a $G$-Poincar\'e duality space, provided that $p$ has compact fibres which are equivalent to spheres after forgetting the group action. We want to study the question when the $G$-homotopy type $X$ is equivalent to  closed topological manifold with $G$-action. 

\subsection{Manifold approximate fibrations and teardrop neighborhoods}

As alluded to in the introduction, if $p$ would be homotopic to an equivariant sphere bundle projection, one could glue in the corresponding disc bundle. This condition is trivially fulfilled in the case where $L$ is a discrete set, but can usually not be expected. In this section, we show that we can glue $L$ and $(\overline{N},\partial \overline{N})$ under much weaker assumptions. The crucial notion is that of an approximate fibration, which we recall for continuous maps between euclidean neighborhood retracts (ENRs).

\begin{defn}
	A continuous map $p \colon E \rightarrow B$ between ENRs is an \textit{approximate fibration} if
	\begin{enumerate}
		\item it is a proper map;
		\item for each open subset $U \subset B$ the square
		\begin{equation}
			\begin{tikzcd}
				p^{-1}(U) \ar[r] \ar[d] & E \ar[d]\\
				U \ar[r] & B
			\end{tikzcd}
		\end{equation}
		is homotopy cartesian.
	\end{enumerate}
\end{defn}

The notion of approximate fibrations was introduced by Coram-Duvall \cite{approx}, and recently revisited in the context of higher topos theory in \cite{approximate}, where a few equivalent characterisations are given. In the situation of the pushout \cref{eq:pushout_gluing}, we can construct a manifold in the $G$-homotopy type $X$, provided there is an approximate fibration
\begin{equation}
	q \colon G \backslash \partial C \times \bbR \rightarrow L \times \bbR
\end{equation}
which induces the map $G \backslash p \colon G \backslash \partial C \rightarrow L$ up to homotopy. The key is the following construction which was brought to our attention by Shmuel Weinberger.

\begin{constr}[\cite{HughesTaylorWilliams}, Sec. 3]
	\label{constr:teardrop}
	Let $E$ and $B$ be topological spaces, and let $p \colon E \rightarrow B \times \bbR$ be a map. Then the \textit{teardrop construction on $p$} is the topological space $E \cup_p B$ where
	\begin{enumerate}
		\item the underlying set of $E \cup_p B$ is the union of $E$ and $B$;
		\item the topology is minimal with the property that $E \subset E \cup_p B$ is an open embedding and that the map
		\[ E \cup_p B \rightarrow B \times (-\infty, +\infty], \hspace{3mm}  x \mapsto \begin{cases}
			p(x) &\text{ for $x \in E$};\\
			(x,+\infty) &\text{ for $x \in B$};
		\end{cases} \]
		is continous, where $(-\infty,\infty]$ is topologised to be homeomorphic to $(0,1]$.
	\end{enumerate}
\end{constr}

\begin{rmk}
	\label{rmk:teardrop_and_controlled_homeomorphism_type}
	\cite[Thm. 2.2.]{HughesTaylorWeinberger} says that
	the homeomorphism type of the teardrop construction on $p$ only depends on the \textit{controlled homeomorphism type} of $E$ over $B$.
\end{rmk}

\begin{prop}
	Let $p \colon E \rightarrow B \times \bbR$ be an approximate fibration. Assume that
	\begin{enumerate}
		\item both $E$ and $B$ are topological manifolds;
		\item the homotopy fibres of $p$ are equivalent to spheres.
	\end{enumerate}
	Then the teardrop $E \cup_p B$ on $p$ is a topological manifold.
\end{prop}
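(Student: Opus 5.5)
Since $E \subset E \cup_p B$ is an open subset and $E$ is a manifold, it suffices to show that every point $b \in B$ has a Euclidean neighbourhood in $E \cup_p B$. The question is local over $B$: for an open $U \subset B$, the preimage of $U \times (-\infty,+\infty]$ in $E \cup_p B$ is the teardrop of the restriction $p^{-1}(U\times \bbR) \to U \times \bbR$. This restriction is again an approximate fibration, by the open-subset condition in the definition. So we may replace $B$ by a small open ball $U \cong \bbR^k$ around $b$.

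Over a contractible base, the sphere condition on the fibres means $p$ is an approximate fibration over $\bbR^k \times \bbR = \bbR^{k+1}$ with homotopy fibre $S^l$. Now $p^{-1}(\bbR^k\times\bbR)$ is a manifold of dimension $n=k+l+2$, so we compare $p$ with the model projection
\[ p_0 \colon \bbR^k \times (\bbR^{l+1}\setminus 0) \cong \bbR^k \times S^l \times \bbR \to \bbR^k \times \bbR. \]
The teardrop of $p_0$ is $\bbR^k \times \bbR^{l+1} = \bbR^n$, provided the radial coordinate is reparametrised so that $+\infty$ corresponds to the origin. By \cref{rmk:teardrop_and_controlled_homeomorphism_type}, it is enough to produce a controlled homeomorphism between $p^{-1}(U\times \bbR)$ and $U \times S^l \times \bbR$ over $U \times \bbR$, where the control tends to zero as the $\bbR$-coordinate tends to $+\infty$.

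The main obstacle is this uniqueness step: showing that a manifold approximate fibration over $\bbR^{k+1}$ whose homotopy fibre is $S^l$ is controlled homeomorphic to the product projection. I would first try to cite the classification of manifold approximate fibrations of Hughes--Taylor--Williams. Over a contractible base, that classification says such maps correspond to bundles of homotopy fibres, which here are manifold approximate fibrations over a point, i.e.\ manifolds homotopy equivalent to $S^l$. The relevant data over $\bbR^{k+1}$ are germs at infinity of this moduli, and after the $\times\bbR$ stabilisation these become trivial. This uses that the homotopy fibre is a sphere, hence has a unique manifold structure up to stabilisation in dimensions where the Poincaré conjecture holds.

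Low-dimensional cases ($n\le 4$ or $l \le 2$) need separate care. One option is to stabilise by crossing with $\bbR^m$. This would give a manifold product $(E\cup_p B)\times \bbR^m$, and one would then invoke that a space whose product with $\bbR^m$ is a manifold, and which is a homology manifold with the resolution property, is itself a manifold. Alternatively, if the intended application only involves high dimensions, one can assume $n\ge 5$ and restrict the statement accordingly.

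Finally, Hausdorffness and second countability of $E\cup_p B$ follow from properness of $p$ and continuity of the map to $B\times(-\infty,+\infty]$. Combining these with the local Euclidean charts completes the argument.
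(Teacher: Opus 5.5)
Your proposal is essentially the paper's proof. Both localise to $B\cong\bbR^k$, compare with the model projection $S^{l}\times\bbR^k\times\bbR\to\bbR^{k+1}$ whose teardrop is Euclidean, and use \cref{rmk:teardrop_and_controlled_homeomorphism_type} to reduce to uniqueness, up to controlled homeomorphism, of sphere-fibred approximate fibrations over $\bbR^{k+1}$; the paper closes that step by citing Chapman's approximation results, not the Hughes--Taylor--Williams classification.

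Two small corrections. The dimension count should be $n=k+l+1$, since the model $\bbR^k\times S^l\times\bbR$ has dimension $k+l+1$. Your low-dimensional stabilisation fallback also fails as stated: Bing's dogbone space is a resolvable homology $3$-manifold whose product with $\bbR$ is $\bbR^4$, yet it is not a manifold. So restricting to high dimensions is the right choice, and it suffices for the application, where $d\geq 6$.
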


\begin{proof}
	We check that $E \cup_p B$ is locally euclidean, the Hausdorff property and second countability are easily verified. Since $E$ is locally euclidean and $E \subset E \cup_p B$ is open, it suffices to check  that each $x \in B \subset E \cup_p B$ has a euclidean neighborhood. Observe that for an open neighborhood $x \in U \subset B$, the teardrop on the approximate fibration $p\colon p^{-1}(U \times \bbR) \rightarrow U \times \bbR$ is an open neighborhood of $x \in E \cup_p B$. So we may assume for the rest of the proof that $B$ is a euclidean space $\bbR^k$. The teardrop of the projection map $\pi \colon S^{d-k-1} \times \bbR^{k} \times \bbR \rightarrow \bbR^{k} \times \bbR$ is homeomorphic to $\bbR^d$. Hence, by \cref{rmk:teardrop_and_controlled_homeomorphism_type} it suffices to argue that any approximate fibration $q \colon E' \rightarrow \bbR^{k+1}$ with homotopy fibre $S^{d-k-1}$ is controlled homeomorphic to this projection $\pi$. This is shown in \cite{Chapman81}.
\end{proof}

\subsection{$K$-theoretic obstructions to approximate fibering}

To apply the results of the preceding subsection in practice, we need techniques to construct approximate fibrations over the product of a closed manifold $M$ with $\bbR$. One way to produce approximate fibrations over $M \times \bbR$ is to construct an approximate fibration over $M \times S^1$, and pulling back along the infinite cyclic cover $M \times \bbR \rightarrow M \times S^1$. This motivates the following.

\begin{defn}
	A closed aspherical manifold $M^k$ has the \textit{approximate fibering property} if for each map $ p \colon E^d \rightarrow M^k$, $d \geq 5$ which is surjective on fundamental groups and has compact homotopy fibres, the map
	\[ p \times S^1 \colon E \times S^1 \rightarrow M \times S^1 \]
	is homotopic to an approximate fibration.
\end{defn}

\begin{thm}[Farrell--L\"uck--Steimle]
	\label{thm:FLS}
	Let $M$ be an aspherical manifold whose fundamental group is word-hyperbolic and which admits a PL-structure. Then $M$ has the approximate fibering property.
\end{thm}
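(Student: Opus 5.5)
The statement is attributed to Farrell--L\"uck--Steimle. The plan is therefore to deduce it from their obstruction theory rather than to reprove it. Their result says roughly the following. Let $p \colon E \rightarrow M$ be a map from a closed manifold of dimension at least $5$ to a closed PL-manifold, with compact homotopy fibres. Then there is an obstruction to deforming $p \times S^1 \colon E \times S^1 \rightarrow M \times S^1$ to an approximate fibration. This obstruction lives in the homology of $M$ with coefficients in the non-connective $K$-theory spectrum, twisted by the local system of fibres. More precisely, it lies in the Whitehead-type group $\pi_0$ of the cofibre of the assembly map
\[ H(M; \underline{\ncKthy}(\bbZ[\pi_1 \mathrm{fib}])) \rightarrow \ncKthy(\bbZ \pi_1 E). \]
Equivalently, it is a lift of the fibred Whitehead torsion across the parametrised assembly map.

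First, I would recall this obstruction precisely from \cite{FLS}. It is the image of a ``fibred finiteness/torsion'' class, and it vanishes as soon as the relevant \emph{parametrised} $\ncKthy$-theoretic assembly map over $M$ is an equivalence (in low degrees, or injective in degree one). This is where the PL-hypothesis enters: FLS need a triangulation of $M$ to run the controlled/cellular arguments.

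Second, I would identify the parametrised assembly map with a Farrell--Jones type assembly map. The map $p$ is surjective on $\pi_1$ with fibre $F$, so $\pi_1 E$ is an extension $1 \rightarrow \pi_1 F \rightarrow \pi_1 E \rightarrow \pi \rightarrow 1$. Since $M \simeq B\pi$ is aspherical, the parametrised assembly is the relative assembly map
\[ H^{\pi}(E\pi; \ncKthy_{\bbZ[\pi_1 E]}) \rightarrow H^\pi(\ast;\ncKthy_{\bbZ[\pi_1 E]}) \]
for the homology theory obtained by pulling back along $\pi_1 E \rightarrow \pi$. Equivalently, in terms of \cref{cons:equivariant_homology_theories}, it is the assembly for the family of subgroups of $\pi_1 E$ that are preimages of the trivial subgroup.

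Third, I would show this map is an equivalence. The Farrell--Jones conjecture with coefficients in additive categories holds for word-hyperbolic $\pi$ \cite{BLR}, which gives the equivalence
\[ H^\pi(E_\vcyc \pi;\ldots) \simeq H^\pi(\ast;\ldots). \]
Since $\pi$ is hyperbolic and torsionfree, its virtually cyclic subgroups are trivial or infinite cyclic. Passing from $E_\vcyc\pi$ to $E\pi$ then requires the relative term for infinite cyclic subgroups to vanish, and that term consists of Nil-groups. These Nils are exactly what crossing with $S^1$ handles. By the Bass--Heller--Swan decomposition, the $S^1$-stabilisation kills the twisted Nil contributions, or more accurately absorbs them into the $\ncKthy_{-1}$- and $\ncKthy_0$-terms that are part of the FLS obstruction for $E \times S^1$.

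The main obstacle is this last step, namely controlling the Nil-terms for $\bbZ$-subgroups with twisted coefficients. The first thing I would try is the Bartels--Reich/Wegner result that, for hyperbolic groups, the relative assembly $\finite \rightarrow \vcyc$ is split injective with cokernel given by Nils. I would then check that the FLS obstruction for $p \times S^1$ lands precisely in the summand complementary to those Nils.
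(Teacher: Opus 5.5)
Your plan is to make the obstruction \emph{group} vanish, and that cannot work in the generality of the statement. Take your description of the group as (a homotopy group of) the cofibre of the parametrised assembly map over $M$. Farrell--Jones for $\pi$, together with the splitting of the relative assembly map that you quote, identifies this cofibre with a sum of twisted Nil-terms with coefficients in $\bbZ[\pi_1F]$, one for each maximal infinite cyclic subgroup $C\subset\pi$; here $F$ is the homotopy fibre of $p$. The approximate fibering property quantifies over all $p$ with compact homotopy fibres, so $\pi_1F$ is arbitrary (think of a product projection $M\times L\to M$). Hence $\bbZ[\pi_1F]$ need not be regular, and these Nils are nonzero in general. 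Crossing with $S^1$ does not remove them. By Bass--Heller--Swan, the group for $p\times S^1$ contains the group for $p$ as a summand and acquires further $\reducedK_0$-type and Nil summands. So the group only gets bigger. Your final step, checking that the obstruction of $p\times S^1$ lands in the complement of the Nil summands, has no mechanism behind it. After Farrell--Jones that complement is zero, so the step just restates what is to be proved. (Also, a Whitehead-type obstruction sits in $\pi_1$, not $\pi_0$, of such a cofibre.)

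The missing idea is to show that the \emph{element} vanishes, which is formal. As the paper recalls from \cite{FLS}, the obstruction for $p\times S^1$ is the image, in a quotient of $\whiteheadgroup(E\times S^1)$, of the torsion $\tau(h)$ for \emph{any} factorisation $E\times S^1\xrightarrow{h}P\xrightarrow{q}M\times S^1$ with $h$ a homotopy equivalence, $P$ a compact ENR and $q$ an approximate fibration. Passing to the quotient is exactly what makes this class independent of the choice. By \cite[Thm.~4.1]{FLS}, the map $p$ itself factors as $E\xrightarrow{w}Q\xrightarrow{r}M$ with $w$ a homotopy equivalence, $Q$ an ENR and $r$ an approximate fibration. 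Take $P=Q\times S^1$, $q=r\times\id_{S^1}$ and $h=w\times\id_{S^1}$. The product formula then gives $\tau(h)=0$, because $\chi(S^1)=0$. This is the same phenomenon as an $h$-cobordism becoming a product after crossing with $S^1$. The stabilisation kills the particular torsion element, not the group it lives in, and the paper's argument needs no Farrell--Jones computation at this point.
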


Since this theorem does not appear verbatim in \cite{FLS}, we deduce it from the results therein below. We note that we believe that the results of this and the preceding subsection justify revisiting the theory of approximate fibrations, and to understand analogs of \cref{thm:FLS} for more general base spaces. For example, it would be interesting to remove the PL-hypothesis, or to have a useful obstruction theory for non-aspherical base spaces.

\begin{recollect}
	Compact manifolds are ENRs. A homotopy equivalence $h \colon X \rightarrow Y$ of compact ENRs has a Whitehead torsion $\tau(h) \in \whiteheadgroup(X)$ in the Whitehead group of $X$. For a homotopy equivalence  $h \colon X \rightarrow Y$ of ENRs, and another ENR $Z$ with zero Euler characteristic, one has $0=\tau(h \times Z \colon X \times Z \rightarrow Y \times Z)$, see \cite[Ch. 2]{lueckIC}.
\end{recollect}

\begin{proof}[Proof of \cref{thm:FLS}]
	By \cite{FLS}, the obstruction to finding an approximate fibration in the homotopy class of $p \times S^1$ is constructed as follows.
	\begin{enumerate}
		\item Construct any factorisation of $p \times S^1$ 
		\begin{equation}
			E \times S^1 \xrightarrow{h} P \xrightarrow{q} M \times S^1
		\end{equation}
		where $h$ is a homotopy equivalence, $P$ a compact ENR and $q$ an approximate fibration.
		\item Take the Whitehead torsion $\tau(h) \in \whiteheadgroup(E \times S^1)$ of the homotopy equivalence $h$.
		\item Pass to a certain quotient of the Whitehead group, in which the constructed obstruction does not depend on any of the choices made above.
	\end{enumerate}
	We note that it is possible to pick $P$ such that $\tau(h) =0 \in \whiteheadgroup(E \times S^1)$. Namely, pick a factorisation of $p \colon E \rightarrow M$ into
	\[ E \xrightarrow{w} Q \xrightarrow{r} M \]
	where $w$ is a homotopy equivalence, $Q$ an ENR and $r$ an approximate fibration. This is possible by \cite[Thm. 4.1.]{FLS}. Setting $P = Q \times S^1$, $h = w \times S^1$ and $q$ the composite of $r \times S^1$ with the projection $M \times S^1 \rightarrow M$, we note that $\tau(h) = \tau(w \times S^1) = 0$.  Since the obstruction to finding an approximate fibration within the homotopy class of $p \times S^1$ is the image of $\tau(h)$ in a quotient of $\whiteheadgroup(E \times S^1)$, we see that it vanishes, as desired.
\end{proof}

\section{The main theorem}

In this section, we combine the results of the previous sections to prove \cref{thm:main_theorem}, and in fact a slight refinement, \cref{thm:refined_main_theorem}. 

\subsection{Manifold structures on the complement}

We start with a preliminary step, motivated by the following. Suppose we have a $G$-space $X$ equipped with the structure of a finite semifree isovariant $G$-Poincar\'e space, with complement $(C,\partial C)$ and complement quotient $(Q,\partial Q)$. In \cref{thm:transfer_injectivity} we showed that in the situation relevant for \cref{thm:main_theorem}, $\tso(C^{e},\partial C^{e}) = 0$ implies $\tso(Q,\partial Q) = 0$. Here, we give the relevant technique to show that $\tso(C^{e},\partial C^{e}) = 0$ in our situation. The main result \cref{lem:tso_substraction_lemma} deduces that from the fact that $(C,\partial C)$ is ``embedded" in the ambient manifold $M$, for which we want to solve the Nielsen realisation problem.

One way to show that $\tso(C^{e},\partial C^{e}) = 0$ is to construct an explicit manifold structure using embedding theory \cref{rmk:alternative_construction}. We give an algebraic argument instead. The results in this subsection are closely related to a result of Wall \cite[Cor. 11.3.1]{scm}. 

\begin{lem}
	\label{lem:tso_image_in_relative_term}
	Consider a pushout of finite spaces
	\begin{equation}
		\begin{tikzcd}
			\partial C \ar[r] \ar[d, "p"] & C \ar[d, "q"]\\
			U \ar[r] & X
		\end{tikzcd}
	\end{equation}
	in which $(C,\partial C)$ and $(U,\partial C)$ are $d$-dimensional oriented Poincar\'e duality pairs. Then $X$ naturally is an oriented Poincar\'e duality space, and in the diagram
	\begin{equation}
		\label{eq:diagram_of_structure_spectra}
		\pi_d \structure(X) \rightarrow \pi_d \structure(X,U) \leftarrow \pi_d \structure(C,\partial C)
	\end{equation}
	the elements $\tso(X)$ and $\tso(C,\partial C)$ have equivalent images.
\end{lem}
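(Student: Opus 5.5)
We prove both assertions by passing through Poincaré bordism and using the natural map $\pdbord_*(-) \rightarrow \pi_*\structure(-)$ from \cref{rmk:compatibility_with_pd_bordism}. Since that map is natural in maps of pairs, it suffices to show that the bordism classes of $X$ and of $(C,\partial C)$ have the same image in $\pdbord_d(X,U)$. We then apply \cref{eq:pd_bordism_to_structure_spectrum}, which sends the fundamental classes to $\tso(X)$ and $\tso(C,\partial C)$.

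First we show that $X$ is an oriented Poincaré duality space. This is the standard gluing of Poincaré pairs along a common boundary, as in Wall's gluing lemma. The fundamental classes $[C,\partial C] \in H_d(C,\partial C)$ and $[U,\partial C] \in H_d(U,\partial C)$ have boundaries in $H_{d-1}(\partial C)$. The orientation conventions are chosen so that these boundaries agree up to sign, and the two classes then glue via the Mayer--Vietoris sequence to a class $[X] \in H_d(X)$. Capping with $[X]$ gives a map of Mayer--Vietoris long exact sequences, comparing cohomology of $X$, $C \sqcup U$ and $\partial C$ with homology. The five lemma then shows that capping with $[X]$ is an isomorphism, with arbitrary local coefficients. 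Naturality is clear, since everything is built from the given square.

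Next we compare bordism classes. The map $\pdbord_d(X) \rightarrow \pdbord_d(X,U)$ sends $[\id_X]$ to the singular pair $(X,\emptyset) \rightarrow (X,U)$. The class of $(C,\partial C)$ is pushed forward along $q \colon (C,\partial C) \rightarrow (X,U)$. We exhibit a singular Poincaré bordism of pairs in $(X,U)$ between these two singular pairs. The candidate is the cylinder $W = X \times [0,1]$, viewed as a Poincaré triad. Its boundary splits as $X \times \{0\}$ on one end, and on the other end as $C \times \{1\} \cup (U \times \{1\} \cup X \times$ nothing$)$.

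More precisely, the cleanest route is the following. In relative bordism $\pdbord_d(X,U)$, a singular pair $(Y,\partial Y) \rightarrow (X,U)$ is equivalent to $(Y',\partial Y')$ whenever $Y = Y' \cup_{\partial Y'} V$ with $V \rightarrow U$. This is the relative "excision of a piece mapping to $U$" move, realised by the bordism $Y \times [0,1]$ with boundary decomposed as $Y \times 0 \,\cup\, (\partial Y\times I \cup V\times 1)$ and $Y' \times 1$. Applying this with $Y = X$, $\partial Y = \emptyset$, $Y' = C$ and $V = U$ identifies the two classes.

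Alternatively, we may argue directly on the algebraic side. Recall that $\structure(X,U)$ is the cofibre of a map of cofibres, and that the total surgery obstruction is built from the symmetric/quadratic signature of the fundamental class. The image of $[X]$ in $H_d(X,U)$ coincides with the image of $[C,\partial C]$ under excision $H_d(C,\partial C) \cong H_d(X,U)$. The same identification then holds at the level of the chain-level normal and visible symmetric structures, by naturality of the relative constructions.

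The main obstacle is making this excision/bordism step rigorous in the Poincaré (not manifold) setting. We must check that the relevant cylinder, with its boundary decomposed into three pieces, is a genuine Poincaré triad. We must also check that the compatibility in \cref{rmk:compatibility_with_pd_bordism} extends to triads, i.e.\ to the relative functoriality for pairs, so that bordant singular pairs indeed have equal images in $\pi_d\structure(X,U)$. We would verify the triad condition by the same Mayer--Vietoris and five lemma argument used for gluing, and rely on Ranicki's construction \cite[Sec.~19]{Ranicki92} for the bordism invariance of the map to $\pi_*\structure$.
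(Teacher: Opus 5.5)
Your proposal is correct and is essentially the paper's argument: push the identity classes through the natural map $\pdbord_d(-)\to\pi_d\structure(-)$ of \cref{rmk:compatibility_with_pd_bordism}, and exhibit a singular Poincar\'e bordism in $(X,U)$ between $(X,\emptyset)$ and $(C,\partial C)$. Your cylinder $X\times[0,1]$, with the end $X\times 1 = C\cup_{\partial C} U$ and the piece $U\times 1$ mapping to $U$, is exactly what the paper's diagram $(X,\emptyset)\to(X,U)\leftarrow(C,\partial C)$ encodes; the paper simply cites Wall for $X$ being Poincar\'e instead of running your Mayer--Vietoris argument, and your alternative chain-level paragraph is not needed.
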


\begin{proof}
	See \cite{Wall} for the statement that $X$ is Poincar\'e, automatically of the same dimension $d$ as $(C,\partial C)$.
	The maps of \cref{eq:diagram_of_structure_spectra} fit into a commuting diagram as follows.
	\begin{equation}
		\begin{tikzcd}
			\pdbord_d(X) \ar[r] \ar[d] & \pdbord_d(X,U) \ar[d] & \pdbord_d(C,\partial C) \ar[l] \ar[d]\\
			\pi_d\structure(X) \ar[r] & \pi_d\structure(X,U) & \ar[l] \pi_d\structure(C,\partial C)
		\end{tikzcd}
	\end{equation}
	The total surgery obstructions $\tso(X)$ and $\tso(C,\partial C)$ are the images of the bordism classes $\id \colon X \rightarrow X$ and $\id \colon (C,\partial C) \rightarrow (C,\partial C)$ under the vertical maps. So to prove the statement, it suffices to exhibit a Poincar\'e bordism of the maps $(X,\emptyset) \rightarrow (X,U)$ and $(C,\partial C) \rightarrow (X,U)$. Such a bordism is given by the explicit diagram
	\begin{equation}
		\begin{tikzcd}
			X \ar[r]& X & C \ar[l] \\
			\emptyset \ar[r] \ar[u] & U \ar[u]  & \partial C \ar[l] \ar[u] 
		\end{tikzcd}
	\end{equation}
	of Poincar\'e duality pairs with a reference map to the pair of spaces $(X,U)$.
\end{proof}

\begin{lem}
	\label{lem:tso_substraction_lemma}
	In the situation of \cref{lem:tso_image_in_relative_term}, if $p$ is a $\pi$--$\pi$-equivalence, $\tso(X) =0$ implies $\tso(C,\partial C) = 0$.
\end{lem}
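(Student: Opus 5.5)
By \cref{lem:tso_image_in_relative_term}, the elements $\tso(X)$ and $\tso(C,\partial C)$ have the same image in $\pi_d \structure(X,U)$. If $\tso(X)=0$, the image of $\tso(C,\partial C)$ in $\pi_d\structure(X,U)$ therefore vanishes. It thus suffices to show that the map
\[ \pi_d \structure(C,\partial C) \rightarrow \pi_d \structure(X,U), \]
induced by the map of pairs $(C,\partial C) \rightarrow (X,U)$, is injective. I will in fact show that it is an isomorphism, by proving that $\structure(C,\partial C) \rightarrow \structure(X,U)$ is an equivalence of spectra.

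\textbf{Excision for the assembly term.} By definition, $\structure(Y,A)$ is the cofibre of the composite $\connquadL(\ast)\otimes (Y,A) \rightarrow \quadL(Y,A)$. So I compare the two terms separately. The functor $E\otimes(-)$ on pairs is the cofibre of $E\otimes \Sigma^\infty_+ A \rightarrow E\otimes\Sigma^\infty_+ Y$. Since the square is a pushout, the map on relative terms is an equivalence:
\[ \Sigma^\infty_+ C/\Sigma^\infty_+\partial C \xrightarrow{\simeq} \Sigma^\infty_+ X/\Sigma^\infty_+ U. \]
Hence $\connquadL(\ast)\otimes(C,\partial C)\rightarrow \connquadL(\ast)\otimes(X,U)$ is an equivalence.

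\textbf{The $L$-theory term.} Here excision fails in general, and this is where the $\pi$--$\pi$ hypothesis enters. Because $p$ is a $\pi$--$\pi$-equivalence and the square is a pushout, its cobase change $C \rightarrow X$ is also a $\pi$--$\pi$-equivalence. I would verify this by van Kampen applied to the $1$-truncation: the pushout of $1$-types along an equivalence $\tau_{\le1}\partial C\simeq\tau_{\le1}U$ yields an equivalence. By the algebraic $\pi$--$\pi$-theorem (\cref{prop:algebraic_pi_pi_theorem}), this gives
\[ \quadL(U,\partial C)\simeq 0 \quad\text{and}\quad \quadL(X,C)\simeq 0. \]
Now use the cofibre sequences of pairs: $\quadL(\partial C)\to\quadL(C)\to\quadL(C,\partial C)$, and similarly for $(X,U)$ and for the triple $(X,C,\partial C)$, via the fibre sequences from \cref{subsec:L_thy_with_trf}. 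Applied to the triple $(X,U,\partial C)$ versus $(X,C,\partial C)$, these identify both $\quadL(C,\partial C)$ and $\quadL(X,U)$ with $\quadL(X,\partial C)$.

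Concretely, $\quadL(C,\partial C)\to\quadL(X,\partial C)$ has cofibre $\quadL(X,C)=0$. Likewise $\quadL(U,\partial C)\to\quadL(X,\partial C)\to\quadL(X,U)$ shows that $\quadL(X,\partial C)\simeq\quadL(X,U)$. These identifications are compatible with the map induced by $(C,\partial C)\to(X,U)$, so $\quadL(C,\partial C)\to\quadL(X,U)$ is an equivalence.

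\textbf{Conclusion and main obstacle.} The assembly maps are natural in maps of pairs, so comparing cofibres gives $\structure(C,\partial C)\simeq\structure(X,U)$. Injectivity follows, and so $\tso(C,\partial C)=0$. The main obstacle I expect is checking that the relative $L$-theory of pairs behaves like a homology theory on triples. This means deriving the triple cofibre sequence from the pullback definition \cref{eq:relative_metabolic_construction}, with the metabolic term providing the nullbordisms, and then checking naturality. I would derive it from the fact that $\quadL(Y,A)\simeq\cofib(\quadL(A)\to\quadL(Y))$ functorially, which reduces everything to the octahedral axiom.
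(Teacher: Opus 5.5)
Your proof is correct and follows the paper's route. Both arguments reduce to showing that $\structure(C,\partial C)\to\structure(X,U)$ is an equivalence, handle the assembly term by excision, and handle the $L$-theory term using the $\pi$--$\pi$ hypothesis on $p$ and its cobase change $q$. The only difference is packaging: the paper observes that the absolute square $\quadL(\partial C)\to\quadL(C)$, $\quadL(U)\to\quadL(X)$ is cocartesian because both vertical maps are equivalences, which is exactly your vanishing of $\quadL(U,\partial C)$ and $\quadL(X,C)$ combined with the octahedral identification through $\quadL(X,\partial C)$.
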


\begin{proof}
	The statement is a consequence of \cref{lem:tso_image_in_relative_term} and the fact that $\structure(C,\partial C) \rightarrow \structure(X,U)$ is an equivalence. To see the latter, it suffices to show that both the maps
	\begin{equation}
		\quadL(\ast)_{\geq 1} \otimes (C,\partial C) \rightarrow \quadL(\ast)_{\geq 1} \otimes (X,U) \hspace{3mm} \text{and} \hspace{3mm} \quadL(C,\partial C) \rightarrow \quadL(X,U)
	\end{equation}
	are equivalences. The first is a consequence of excision. For the second, since $\Lthy(Y,B) \simeq \cofib(\quadL(B) \rightarrow \quadL(Y))$ we note that it follows if we can justify that 
	\begin{equation}
		\begin{tikzcd}
			\quadL(\partial C) \ar[r] \ar[d, "\quadL(p)"] & \quadL(C) \ar[d, "\quadL(q)"]\\
			\quadL(U) \ar[r] & \quadL(X)
		\end{tikzcd}
	\end{equation}
	is cocartesian. By assumption, $p$ is a $\pi$--$\pi$-equivalence, and hence so is $q$. But $\quadL(-)$ inverts $\pi$--$\pi$-equivalences, so both vertical maps in the above square are equivalences, which implies that it is cocartesian.
\end{proof}

\begin{rmk}
	\label{rmk:alternative_construction}
	This remark presents an alternative construction of a manifold structure on $(C,\partial C)$ under the hypothesis that $M^{hC_p}$ in \cref{thm:main_theorem} is equivalent to a $k$-dimensional topological manifold with $d \geq 2k + 4$. This shows that $\tso(C,\partial C) = 0$ by a geometric argument, under a stronger codimension assumption. We include it to illustrate how the algebraic argument presented above leads to a stronger conclusion.
	
	In the situation of \cref{thm:main_theorem} pick a closed topological manifold $L$ homotopy equivalent to $M^{hC_p}$, and let $k$ be the dimension of $L$. Bringing the map $ L \rightarrow M$ induced by the map $M^{hC_p} \rightarrow M$ into general position, we may assume that $L$ is a locally flat embedded submanifold, using that $2k < d$. We claim that $M\setminus L$ is the interior of a compact manifold by checking that Siebenmann's end theorem applies. 
	
	First, the embedding $L \subset M$ defines a Poincar\'e embedding of $L$ into $M$. The assumption $d \geq 2k + 4$ implies by \cite[Cor. B]{KleinEmbII} that this Poincar\'e embedding is concordant to the embedding provided by the isovariant structure on $M$. In  particular, the end of $M \setminus L$ identifies with $\partial C$. Since $\partial C$ is a finite space, we conclude that indeed Siebenmann's end theorem applies to show that $M \setminus L$ is the interior of a compact manifold $\overline{M \setminus L}$. Then $(C,\partial C) \simeq (\overline{M \setminus L} ,\partial \overline{M \setminus L})$.
\end{rmk}

\subsection{Putting together}

\begin{thm}
	\label{thm:refined_main_theorem}
	Let $M^d$ be an oriented aspherical $d$-manifold with Farrell--Jones fundamental group $\pi$, and $\alpha \colon G \rightarrow \hAut(M)$ an $E_1$-map. Assume that
	\begin{enumerate}
		\item the dimension $d$ is at least six;
		\item the map $\alpha$ takes values in orientation-preserving automorphisms;
		\item the orbifold fundamental group $\orbifoldfundamentalgroup = \pi_1 (M_{hG})$ contains no virtually cyclic subgroups of type II;
		\item the Borel $G$-space $\nielsspace$ associated to $(M,\alpha)$ admits the structure of a finite semifree isovariant $G$-Poincar\'e duality space;
		\item the space $M^{hG}$ has the homotopy type of a closed manifold $L$ of dimension at most $d-3$, which has the approximate fibering property.
	\end{enumerate}
	Then $\alpha$ refines to a Borel $G$-action on $M$.
\end{thm}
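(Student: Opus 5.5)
We follow the pipeline (a)--(c) from the introduction. First we fix a finite semifree isovariant $G$-Poincar\'e structure on $\nielsspace$, as provided by assumption (4):
\begin{equation*}
	\begin{tikzcd}
		\partial C \ar[r] \ar[d, "p"] & C \ar[d]\\
		\nielsspace^{G} \ar[r] & \nielsspace
	\end{tikzcd}
\end{equation*}
It has complement quotient $(Q,\partial Q)$. Since $\nielsspace^G \simeq M^{hG} \simeq L$ has dimension at most $d-3$, the fibres of $p$ are spheres $S^l$ with $l \geq 2$. Hence $p$ is a $\pi$--$\pi$-equivalence, which is also part of the definition of a good decomposition.

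\textbf{Step 1: manifold structure on the complement.} Form the pushout of $\partial C^e \to C^e$ with the mapping cylinder $U$ of $p^e\colon \partial C^e \to \nielsspace^G$. Then $(U,\partial C^e)$ is a $d$-dimensional oriented Poincar\'e pair, being a spherical fibration disc-bundle over the Poincar\'e space $L$, and the pushout is $\nielsspace^e \simeq M$. Since $M$ is a closed manifold, $\tso(M)=0$. \cref{lem:tso_substraction_lemma}, applied with $p^e$ a $\pi$--$\pi$-equivalence, gives $\tso(C^e,\partial C^e)=0$.

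Next we use the hypotheses. $\orbifoldfundamentalgroup$ is a Farrell--Jones group, being a finite extension of $\pi$. It has no type II virtually cyclic subgroups by (3), and by (2) the quotient $(Q,\partial Q)$ is oriented. Therefore \cref{thm:transfer_injectivity} together with compatibility of $\tso$ with transfers gives $\tso(Q,\partial Q)=0$. Since $d\geq 6$, \cref{thm:Ranicki_tso} produces a compact manifold pair $(N,\partial N)\simeq (Q,\partial Q)$. We then pass to the $G$-cover $(\overline N,\partial\overline N)$. Finiteness was already built into (4), and \cref{thm:whitehead_groups} lets us arrange simple equivalences if needed.

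\textbf{Step 2: gluing.} We realise $G\backslash p$ by a map $\partial N \to L$. It is surjective on $\pi_1$ and has compact homotopy fibres $S^l$. Because $\dim \partial N = d-1 \geq 5$, the approximate fibering property (5) deforms $\partial N\times S^1 \to L\times S^1$ to an approximate fibration. Pulling back to $\partial N\times\bbR \to L\times\bbR$ and lifting to the free $G$-cover gives an equivariant approximate fibration $\partial\overline N\times\bbR\to L\times\bbR$. This map is $G$-equivariant, since it is a cover of one on the quotient, and $G$ acts trivially on $L$.

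We then apply the teardrop \cref{constr:teardrop} to $\overline N^\circ\cong \partial\overline N\times\bbR$, using a collar and treating the open collar end as $\bbR$. The preceding proposition shows that $M' = \overline N^\circ\cup L$ is a closed topological manifold. The $G$-action extends by the trivial action on $L$, because the teardrop topology is natural in the equivariant map.

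\textbf{Step 3: identification.} By construction $M'$ is $G$-equivalent to the pushout, i.e. to $\nielsspace$. So $M'^{G}=L\simeq M^{hG}$ and the action is Borel. Forgetting the action, $M'\simeq M$ compatibly with $\alpha$. Both are closed aspherical manifolds with Farrell--Jones fundamental group and $d\geq 6$, so the Borel conjecture gives a homeomorphism homotopic to this equivalence. Transporting the action along it refines $\alpha$.

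\textbf{Main obstacle.} We expect the hardest step to be Step 2. One must check that the teardrop on the approximate fibration over $L\times\bbR$ really attaches to the collar end of $\overline N$, and that the result is a manifold near $L$ equivariantly. One must also check that the homotopy type of the glued space is the pushout; this is where the homotopy cartesian property of approximate fibrations enters. A secondary subtlety in Step 1 is the orientation of $(U,\partial C^e)$ and the matching of the fundamental classes.
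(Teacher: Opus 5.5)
Your proposal is correct and follows the paper's proof essentially step for step. The steps are: $\tso(C^e,\partial C^e)=0$ from $\tso(M)=0$ via \cref{lem:tso_substraction_lemma}; injectivity of the transfer (\cref{thm:transfer_injectivity}) and Ranicki's theorem to obtain $(N,\partial N)$; the approximate fibering property for $\partial N\times S^1\to L\times S^1$, lifted to an equivariant approximate fibration $\partial\overline N\times\mathbb{R}\to L\times\mathbb{R}$; the teardrop glued in along a collar; and the Borel conjecture to identify $M'$ with $M$. There are two harmless slips. First, the homotopy fibres of the quotient map $\partial N\to L$ are $G\backslash S^l$ rather than $S^l$; only their compactness is used at that point, and the lift to $\partial\overline N$ does have fibres $S^l$. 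Second, $\overline N^\circ$ is not homeomorphic to $\partial\overline N\times\mathbb{R}$: the teardrop is formed on the open collar $\partial\overline N\times\mathbb{R}\subset\overline N^\circ$ and then glued to $\overline N^\circ$, as you evidently intend.
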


\begin{proof}
	Let $(C,\partial C)$ denote the complement of the finite semifree isovariant $G$-Poincar\'e structure and $(Q,\partial Q)$ the complement-quotient. The orientation on $M$ equips $(C,\partial C)$ with an orientation, which descends to $(Q,\partial Q)$, since $\alpha$ acts orientation-preserving. We show that $\tso(Q,\partial Q) = 0$. The image of $\tso(Q,\partial Q)$ under the transfer map $\pi_d \structure(Q,\partial Q) \rightarrow \pi_d\structure(C^{e},\partial C^{e})$ is $\tso(C^{e},\partial C^{e})$. Since $\tso(M) = 0$, \cref{lem:tso_substraction_lemma} shows that $\tso(C^{e},\partial C^{e}) = 0$. The transfer is an injective map by \cref{thm:transfer_injectivity}, so we actually deduce that $\tso(Q,\partial Q) = 0$.
	
	Since $d \geq 6$, we can apply Ranicki's theorem (recalled in \cref{thm:Ranicki_tso}) to deduce the existence of a manifold structure $(N,\partial N) \simeq (Q,\partial Q)$. Pulling back along the $G$-cover $(C,\partial C) \rightarrow (Q,\partial Q)$, we deduce the existence of a $G$-manifold structure $(\overline{N},\partial \overline{N}) \simeq (C,\partial C)$.
	
	Since $L$ has the approximate fibering property, we may find an approximate fibration in the homotopy class $\partial N \times S^1 \rightarrow L \times S^1$. Pulling back along the infinite cyclic cover of $S^1$ and precomposing with the $C_p$-cover of $\partial N$, we get an approximate fibration
	\begin{equation}
		p \colon \partial \widetilde{N} \times \bbR \rightarrow L \times \bbR
	\end{equation}
	which is also an equivariant map. Hence, the teardrop construction $\partial \widetilde{N} \times \bbR \cup_p L$ is a topological manifold with $G$-action. Using an equivariant collar we get an embedding $\partial \widetilde{N} \times \bbR \hookrightarrow \widetilde{N} \setminus \partial \widetilde{N}$.
	
	We define
	\begin{equation}
		M' \coloneqq (\widetilde{N} \setminus \partial \widetilde{N}) \cup_{\partial \widetilde{N} \times \bbR} \partial \widetilde{N} \times \bbR \cup_p L.
	\end{equation}
	Then $M'$ is a topological manifold with $G$-action. By construction, there is a $G$-equivariant homotopy equivalence $k \colon M' \rightarrow \nielsspace$. By the Borel conjecture for word-hyperbolic groups, the map $k\colon M' \rightarrow M$ is homotopic to a homeomorphism $h \colon M' \rightarrow M$.
	Consider the following diagram of $E_1$-groups, where the maps $c_h$ denote conjugation with the homeomorphism $h$, and $c_k$ for conjugation with $k$, and $\mathrm{Homeo}^{\delta}(-)$ denotes the homeomorphism group, with the discrete topology.
	\begin{equation}
		\begin{tikzcd}
			G \ar[r, "\mathrm{act}"] & \mathrm{Homeo}^\delta(M') \ar[r, "u_{M'}"] \ar[d, "c_h"] & \hAut(M') \ar[d, "c_h"] \ar[dr, "c_k"]& \\
			& \mathrm{Homeo}^{\delta}(M) \ar[r, "u_M"] & \hAut(M) \ar[r, "\simeq"] &\hAut(\nielsspace^{e})
		\end{tikzcd}
	\end{equation}
	The map $ \mathrm{act}$ is induced by the $G$-action on $M'$, and the square commutes.
	Since $h$ and $k$ are homotopic, the maps $c_h, c_k \colon \hAut(M') \rightarrow \hAut(M)$ are equivalent, showing that also the right triangle commutes. Since $c_k \circ u_{M'} \circ \mathrm{act}$ identifies with the $E_1$-map $\alpha$, we see that $c_h \circ \mathrm{act}$ indeed gives a $C_p$-action refining $\alpha$.
\end{proof}

\begin{proof}[Proof of \cref{thm:main_theorem}]
	Note that torsionfree word-hyperbolic groups which are not cyclic have trivial centre, so an $E_1$-map $\alpha \colon C_p \rightarrow \hAut(M)$ is the same as a group map $C_p \rightarrow \Out(\pi)$.
	We verify that the assumptions of \cref{thm:main_theorem} imply the assumptions of \cref{thm:refined_main_theorem}. 
	Word-hyperbolic groups are Farrell--Jones groups. If $p$ is odd, then $\alpha$ automatically acts orientation-preserving. If $\orbifoldfundamentalgroup$ had a virtually cyclic subgroup of type II, then $\orbifoldfundamentalgroup$ would possess elements of even order. But every finite subgroup of $\orbifoldfundamentalgroup$ is isomorphic to a subgroup of $C_p$, and none of them have even order. The existence of the structure of a finite semifree isovariant $C_p$-Poincar\'e space is provided by \cref{thm:existence_of_finite_isovariant_structures}. The dimension assumption of a PL-manifold homotopy equivalent to $M^{hC_p}$ implies that that dimension is at most $d-3$, since $(d-2)/2 \leq d-3$ holds for $d\geq 6$. Finally, word-hyperbolicity of $\orbifoldfundamentalgroup$ implies that the components of $L$ are aspherical PL-manifolds, which have the approximate fibering property by \cref{thm:FLS}.
\end{proof}

\begin{recollect}
	\label{recollect:Weyl_group_of_finite_subgroup_again_hyperbolic}
	If $\Gamma$ is a word-hyperbolic group, and $F \subset \Gamma$ a finite subgroup, then the Weyl group $W_\Gamma F$ is word-hyperbolic as well. Indeed, by \cite[Prop. III.$\Gamma$.3.9]{BridsonHaefliger}, the centraliser $C_\Gamma(F)$ of $F$ in $\Gamma$ is word-hyperbolic. Furthermore, if $n \in \Gamma$ normalises $F$, then conjugation by $n$ restricts to an automorphism of $F$. This action is trivial if and only if $n$ centralises $F$. This gives rise to an exact sequence
	\begin{equation}
		1 \rightarrow C_\Gamma(F) \rightarrow N_\Gamma(F) \rightarrow \mathrm{Aut}(F).
	\end{equation}
	Being word-hyperbolic is inherited by finite-index subgroups, finite-index overgroups and quotients by finite subgroups.
	Hence, since $F$ and $\mathrm{Aut}(F)$ are  finite, $N_\Gamma(F)$ and $W_\Gamma F$ are word-hyperbolic as well.
\end{recollect}

\appendix

\section{Group theory}

Let $\pi$ be a torsionfree group and let $G$ be a nontrivial finite group. For this section, we fix an extension
\begin{equation}
	1 \rightarrow \pi \rightarrow \Gamma \rightarrow G \rightarrow 1.
\end{equation}
A subgroup $F \subset \Gamma$ will be called \textit{full} if the composite $F \subset \Gamma \rightarrow G$ is an isomorphism.  In this appendix, we make a few group-theoretic observations about such extensions, in particular relating to the universal space for the family of finite subgroups $E_\finite \Gamma$.  Write $\maximal$ for a set of representatives of the conjugacy classes of full subgroups of $\Gamma$.

\begin{appendixlemma}
	\label{appendixlem:group_theory}
	Let $F \subset \Gamma$ be a full subgroup. Then:
	\begin{enumerate}
		\item every finite subgroup of $N_\Gamma F$ is contained in $F$;
		\item we have $N_\Gamma (N_\Gamma F) = N_\Gamma F$;
		\item for each full subgroup $H \subset \Gamma$, the $\Gamma$-set $\coprod_{F \in \maximal} \Gamma / N_\Gamma F$ has a unique $H$-fixed point.
	\end{enumerate}
\end{appendixlemma}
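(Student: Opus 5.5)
The plan is to derive all three assertions from one cardinality argument, which uses only that $\pi$ is torsionfree and that a full subgroup maps isomorphically onto $G$. I would prove (1) first and then obtain (2) and (3) as formal consequences of it.

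For (1), let $K \subset N_\Gamma F$ be a finite subgroup. Since $F$ is normal in $N_\Gamma F$, the product $KF$ is a subgroup of $N_\Gamma F$, and it is finite because $K$ and $F$ are. Its intersection $KF \cap \pi$ is a finite subgroup of the torsionfree group $\pi$, hence trivial. So $KF$ injects into $G$ under the projection $\Gamma \to G$, and therefore $|KF| \leq |G| = |F|$. As $F \subset KF$, this forces $KF = F$, so $K \subset F$. The only step needing any care is that $KF$ is a subgroup, which holds precisely because $K$ normalises $F$.

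For (2), the inclusion $N_\Gamma F \subset N_\Gamma(N_\Gamma F)$ is automatic. Conversely, let $n$ normalise $N_\Gamma F$. Then $nFn^{-1}$ is a finite subgroup of $nN_\Gamma F n^{-1} = N_\Gamma F$. By (1) it is contained in $F$, and it equals $F$ since both have the same order. Hence $n \in N_\Gamma F$.

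For (3), let $H$ be full; in particular $H$ is finite. A coset $gN_\Gamma F'$ with $F' \in \maximal$ is fixed by $H$ exactly when $g^{-1}Hg \subset N_\Gamma F'$, that is, when $H \subset N_\Gamma(gF'g^{-1})$. Now $gF'g^{-1}$ is again full, so (1) applies to it and gives $H \subset gF'g^{-1}$. Both groups have order $|G|$, so $H = gF'g^{-1}$. Thus the $H$-fixed points are the pairs $(F', gN_\Gamma F')$ with $gF'g^{-1} = H$. By the definition of $\maximal$, $H$ is conjugate to exactly one $F' \in \maximal$. The set of $g$ with $gF'g^{-1} = H$ is a single left coset $g_0 N_\Gamma F'$. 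Hence there is exactly one $H$-fixed point.

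I expect no real obstacle. The key input everywhere is the finiteness and order argument in (1), and the only bookkeeping is in (3): one must check that both the choice of representative in $\maximal$ and the choice of coset are unique.
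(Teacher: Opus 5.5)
Your proof is correct and follows the paper's argument essentially step by step: the finiteness and order count for $KF$ against the torsionfree $\pi$ in (1), the conjugate $nFn^{-1}$ in (2), and the coset computation in (3). You additionally spell out why only the one summand of $\coprod_{F'\in\maximal}\Gamma/N_\Gamma F'$ indexed by the representative conjugate to $H$ contributes fixed points, a step the paper leaves implicit.
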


\begin{proof}
	For the first part, if $H \subset N_\Gamma F$ is not contained in $F$, then we see that $H$ and $F$ generate a subgroup $U$ in $\Gamma$ that is not contained in $F$. The composite $U \subset \Gamma \rightarrow G$ cannot be injective, so $U \cap \pi \neq 1$. Furthermore, $U$ is finite as $H$ normalises $F$, so that each $u \in U$ may be written as $hf$ with $h \in H$ and $f \in F$. This is a contradiction, since $\pi$ is torsionfree.
	The second part follows from the first part: if $g$ normalises $N_\Gamma F$, then $g^{-1} F g$ is finite, hence equal to $F$.
	
	For the third part, we compute that $(\Gamma/N_\Gamma F)^H$ is the set of cosets $gN_\Gamma F$ with $g^{-1} H g \subset N_\Gamma F$. By the first part, $g^{-1} H g = F$. If $h$ also satisfies $h^{-1} H h = F$, we have $g^{-1} h F h^{-1} g = g^{-1} H g = F$, so $h^{-1} g \in N_\Gamma F$ and $gN_\Gamma F = hN_\Gamma F$.
\end{proof}

\begin{appendixlemma}
	\label{appendixlemma:characterisations_of_property_M}
	The following are equivalent.
	\begin{enumerate}
		\item Every nontrivial finite subgroup of $\Gamma$ is contained in a unique full subgroup.
		\item The universal space $E_\finite \Gamma$ admits a model with cells of isotropy type $\{\Gamma/F\}_{F \in \maximal}$ or $\Gamma/e$.
		\item The $\Gamma$-set of path components of $(E_\finite \Gamma)^{>1}$ is $\Gamma$-isomorphic to $\coprod_{F \in \maximal} \Gamma/N_\Gamma F$.
		\item The square
		\begin{equation}
			\begin{tikzcd}
				\coprod_{F \in \maximal} \induct_{N_\Gamma F}^\Gamma EN_\Gamma F \ar[r] \ar[d] & E\Gamma \ar[d] \\
				\coprod_{F \in \maximal} \induct_{N_\Gamma F}^\Gamma E_\finite N_\Gamma F \ar[r] & E_\finite \Gamma
			\end{tikzcd}
		\end{equation}
		is cocartesian.
	\end{enumerate}
\end{appendixlemma}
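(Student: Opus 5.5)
I will prove the cycle of implications (1)$\Rightarrow$(2)$\Rightarrow$(3)$\Rightarrow$(1), and separately (2)$\Leftrightarrow$(4). Throughout I use \cref{appendixlem:group_theory}. I also use a basic fact: every nontrivial finite subgroup $H\subset\Gamma$ meets $\pi$ trivially, since $\pi$ is torsionfree. Hence $H$ maps injectively to $G$.

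\emph{(1)$\Rightarrow$(2).} I build $E_\finite\Gamma$ as the pushout in (4): start from $E\Gamma$ and glue in $\coprod_F \induct_{N_\Gamma F}^\Gamma E_\finite N_\Gamma F$ along $\coprod_F \induct EN_\Gamma F$. By \cref{appendixlem:group_theory}(1), every finite subgroup of $N_\Gamma F$ lies in $F$, which is normal. So $E_\finite N_\Gamma F$ can be modelled as $EW_{\Gamma}F$ with $N_\Gamma F$ acting through $W_\Gamma F$ via $N_\Gamma F\to W_\Gamma F$. Replacing $E_\finite N_\Gamma F$ by the mapping cylinder of $EN_\Gamma F\to EW_\Gamma F$ yields cells of type $\Gamma/e$ and $\Gamma/F$ only.

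It remains to check fixed points of the pushout $P$. For $H=1$, $P$ is contractible because the left vertical map is a nonequivariant equivalence. For $H$ nontrivial finite, $(E\Gamma)^H=\emptyset$, so $P^H=\coprod_F(\induct E_\finite N_\Gamma F)^H$. A point of $\Gamma/N_\Gamma F$ fixed by $H$ corresponds to $g$ with $g^{-1}Hg\subset N_\Gamma F$, hence $g^{-1}Hg\subset F$. By (1) and the argument of \cref{appendixlem:group_theory}(3), there is exactly one such coset over all $F\in\maximal$. The corresponding fibre is $(E_\finite N_\Gamma F)^{g^{-1}Hg}$, which is contractible. For $H$ infinite, $P^H$ is empty. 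So $P\simeq E_\finite\Gamma$. This argument also proves (1)$\Rightarrow$(4).

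\emph{(2)$\Rightarrow$(3) and (4)$\Rightarrow$(2).} Given a model as in (2), $(E_\finite\Gamma)^{>1}$ is the union of the $F$-fixed subcomplexes. The $\Gamma/F$-cells are fixed exactly by the subgroups of conjugates of $F$. Each component of $(E_\finite\Gamma)^{>1}$ lies in some $(E_\finite\Gamma)^{gFg^{-1}}$, which is contractible. Two such fixed sets for distinct conjugates are disjoint, since every cell in them has isotropy exactly a conjugate of $F$ containing both. Hence the components are indexed by full subgroups $gFg^{-1}$, that is, by $\coprod_F\Gamma/N_\Gamma F$. For (4)$\Rightarrow$(2), the mapping-cylinder model above has the required cell types.

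\emph{(3)$\Rightarrow$(1).} Take a nontrivial finite $H$. The set $(E_\finite\Gamma)^H\neq\emptyset$ is contractible and lies in $(E_\finite\Gamma)^{>1}$, so it sits in a single component. That component is stabilised by $H$, so by (3) it corresponds to a coset $gN_\Gamma F$ with $H\subset gN_\Gamma Fg^{-1}$, and therefore $H\subset gFg^{-1}$. For uniqueness, suppose $H\subset F_1,F_2$ with $F_1,F_2$ full. Then $(E_\finite\Gamma)^{F_i}\subset(E_\finite\Gamma)^H$, so the components of $F_1$ and $F_2$ coincide. Under (3) the component attached to a full subgroup is identified with that subgroup, which forces $F_1=F_2$.

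This last identification is the step I expect to be the main obstacle. It must be extracted from (3): an isomorphism of $\Gamma$-sets only fixes stabilisers $N_\Gamma F$, and $F$ is recovered from $N_\Gamma F$ as its unique maximal finite subgroup by \cref{appendixlem:group_theory}(1). The remaining fixed-point computations are routine.
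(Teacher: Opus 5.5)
Your proof is correct and follows essentially the paper's route: the same cycle (1)$\Rightarrow$(4)$\Rightarrow$(2)$\Rightarrow$(3)$\Rightarrow$(1). You carry out explicitly the fixed-point computation for (1)$\Rightarrow$(4) that the paper only alludes to, use the same model $EW_\Gamma F$ for $E_\finite N_\Gamma F$ in (4)$\Rightarrow$(2), and use the same contractibility of $(E_\finite\Gamma)^H$ together with \cref{appendixlem:group_theory}(1) in (3)$\Rightarrow$(1). The step you flagged goes through as you say: $F_1,F_2$ both lie in the stabiliser $N_\Gamma F'$ of the common component, hence in $F'$, hence equal $F'$. The only real variation is (2)$\Rightarrow$(3), where you split $(E_\finite\Gamma)^{>1}$ directly into the pairwise disjoint contractible subcomplexes $(E_\finite\Gamma)^{F'}$, using that every point there has isotropy exactly $F'$. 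This replaces the paper's maps $f$, $p$ and the obstruction-theory argument, and is the cleaner route.
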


\begin{proof}
	To prove (4) $\implies$ (2) we observe that a model for $E_\finite N_\Gamma F$ is given by restricting the action of $W_\Gamma F$ on $EW_\Gamma F$ along the surjection $N_\Gamma F \rightarrow W_\Gamma F$ with finite kernel. So $E_\finite N_\Gamma F$ can be built from cells of isotropy type $N_\Gamma F/ F$. Now observe that $\induct_{N_\Gamma F}^\Gamma (N_\Gamma F/F) = \Gamma/F$.
	The implication (1) $\implies$ (4) is a direct computation on fixed points, or follows from \cite[Cor. 2.11]{LueckWeiermann12}.
	
	To see that (2) $\implies$ (3), we construct two maps
	\begin{equation}
		\coprod_{F \in \maximal} \induct_{N_\Gamma F}^\Gamma E_\finite N_\Gamma F \xrightarrow{f}  E_\finite \Gamma^{>1} \xrightarrow{p} \coprod_{F \in \maximal} \Gamma/N_\Gamma F.
	\end{equation}
	The map $f$ is the unique $\Gamma$-map. To construct the second map, observe that each $\Gamma$-set with full isotropy groups uniquely maps to $\coprod_{F \in \maximal} \Gamma/N_\Gamma F$. By elementary obstruction theory, every $\Gamma$-space constructed out of cells with full isotropy groups has a unique map to $\coprod_{F \in \maximal} \Gamma/N_\Gamma F$. Thus, by assumption we have a unique map $p$, and we note that the composite $pf$ induces an isomorphism after applying the functor $\pi_0(-)^{e}$. It is left to show that $f$ induces a surjection after applying $\pi_0(-)^{e}$ to deduce $(3)$. 
	The assumption guarantees that each $x \in \pi_0(-)^{e}$ is in the image of the unique $\Gamma$-map $\Gamma/F \rightarrow E_\finite \Gamma^{>1}$ after applying $\pi_0(-)^{e}$ for some $F \in \maximal$. Each such map lifts over $f$ by inducting the unique $N_\Gamma F$-map $N_\Gamma F/F \rightarrow E_\finite N_\Gamma F$.
	
	To see that (3)$\implies$(1) let $H \subset \Gamma$ be a nontrivial finite subgroup. There is a unique $\Gamma$-map $\Gamma/H \rightarrow (E_\finite \Gamma)^{>1}$. The assumption implies that $H \subset N_\Gamma F$ for some full subgroup. By \cref{appendixlem:group_theory}, $H\subset F$. If there is a different full subgroup $F'$ containing $H$, then the postcomposition of the projection maps $\Gamma/H \rightarrow \Gamma/F$ and $\Gamma/H \rightarrow \Gamma/F'$ with the unique $\Gamma$-map to $\coprod_{F \in \maximal} \Gamma/N_\Gamma F$ sends $eH$ to different points. But the composite factors over $(E_\finite \Gamma)^{>1}$, and two $\Gamma$-maps $\Gamma/H \rightarrow (E_\finite \Gamma)^{>1}$ are homotopic.
\end{proof}

\begin{appendixlemma}
	\label{appendixlemma:normalisers_and_virtually_cyclic_subgroups}
	Under the equivalent conditions of \cref{appendixlemma:characterisations_of_property_M}, the group $\Gamma$ satisfies the following.
	\begin{enumerate}
		\item When $H$ is a nontrivial finite subgroup and $F$ the unique full subgroup containing it, then $N_\Gamma H \subset N_\Gamma F$.
		\item If $V$ is an infinite, non-torsionfree virtually cyclic subgroup of type I, $V_{\mathrm{max}}$ its maximal finite subgroup, then $V$ is contained in the normaliser of the unique full subgroup containing $V_{\mathrm{max}}$.
	\end{enumerate}
\end{appendixlemma}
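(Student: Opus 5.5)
For part (1), the plan is to show that conjugation by $N_\Gamma H$ permutes the full subgroups containing $H$ and then use uniqueness. Let $n \in N_\Gamma H$. Then $nFn^{-1}$ is again a full subgroup: it is finite of the same order as $F$, and its image in $G$ equals $\bar n\,\bar F\,\bar n^{-1} = G$, so $nFn^{-1} \to G$ is an isomorphism. Moreover $H = nHn^{-1} \subset nFn^{-1}$. By condition (1) of \cref{appendixlemma:characterisations_of_property_M}, $F$ is the unique full subgroup containing $H$, hence $nFn^{-1} = F$, i.e. $n \in N_\Gamma F$. This part is essentially immediate.

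For part (2), let $V$ be infinite, virtually cyclic of type I and not torsionfree. Then $V$ has a unique maximal finite subgroup $V_{\mathrm{max}}$, which is normal in $V$ with $V/V_{\mathrm{max}} \cong \bbZ$; this is the standard structure of type I virtually cyclic groups used in the proof of \cref{lem:relative_assembly_map_and_l_theory}. Since $V$ is not torsionfree, it has a nontrivial element of finite order. Such an element generates a finite subgroup, which must lie in $V_{\mathrm{max}}$, because every finite subgroup of $V$ maps trivially to $\bbZ$. Hence $V_{\mathrm{max}} \neq 1$. Applying part (1) with $H = V_{\mathrm{max}}$ gives $V \subset N_\Gamma V_{\mathrm{max}} \subset N_\Gamma F$, where $F$ is the unique full subgroup containing $V_{\mathrm{max}}$.

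The only point needing care is the structural fact about type I groups. If $p\colon V \to \bbZ$ is a surjection, then $K = \ker p$ is a normal subgroup with $V/K \cong \bbZ$ infinite. Since $V$ is virtually cyclic, $K$ must be finite. Every finite subgroup of $V$ lies in $\ker p = K$, so $K$ is the unique maximal finite subgroup $V_{\mathrm{max}}$. I expect no real obstacle; the main step is simply the observation in part (1) that full subgroups are preserved under conjugation.
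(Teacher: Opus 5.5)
Your proposal is correct and matches the paper's proof. Part (1) conjugates $F$ by an element normalising $H$ to get a full subgroup containing $H$, which by uniqueness must equal $F$. Part (2) applies (1) to the nontrivial normal subgroup $V_{\mathrm{max}} \subset V$. The only difference is that you also justify why $V_{\mathrm{max}}$ exists and is nontrivial, which the paper takes for granted.
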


\begin{proof}
	Let $H$ be a nontrivial finite subgroup, and $g\in \Gamma$ so that  $g^{-1} H g = H$. Then if $H \subset F$ and $F$ is full, also $H \subset g^{-1}F g$ and $g^{-1} F g$ is full. Therefore, $F = g^{-1} F g$ so $g \in N_\Gamma F$.
	For the second point, note that $V \subset N_\Gamma V_{\mathrm{max}}$, and that $V_{\mathrm{max}}$ is nontrivial. Hence, the assertion follows from the first part of the lemma.
\end{proof}

\begin{appendixlemma}
	\label{appendixlemma:computing_fixed_points}
	Under the equivalent conditions of \cref{appendixlemma:characterisations_of_property_M}, writing $X \coloneqq \pi \backslash E_\finite \Gamma$, we have preferred equivalences
	\begin{equation}
		\coprod_{F \in \maximal} B W_\Gamma F \simeq X^G \simeq X^{>1}.
	\end{equation}
	Furthermore, the map $(X^G)_{hG} \rightarrow (X^e)_{hG}$ is equivalent to the map
	\begin{equation}
		\coprod_{F \in \maximal} BN_\Gamma F \rightarrow B\Gamma
	\end{equation}
	induced by the inclusions $N_\Gamma F \subset \Gamma$.
\end{appendixlemma}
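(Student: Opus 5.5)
Under the equivalent conditions of \cref{appendixlemma:characterisations_of_property_M}, I would compute the fixed points of $X = \pi \backslash E_\finite \Gamma$ directly from fixed points of $E_\finite\Gamma$. A point of $X$ is $G$-fixed exactly when a lift $\tilde x \in E_\finite \Gamma$ satisfies: for each $g\in G$ there is $\gamma\in\Gamma$ mapping to $g$ with $\gamma \tilde x = \tilde x$. Equivalently, the isotropy group $\Gamma_{\tilde x}$ surjects onto $G$. This isotropy group is finite and meets $\pi$ trivially, since $\pi$ is torsionfree, so it is then a full subgroup. By the same argument, a point is fixed by some nontrivial $H \subset G$ iff its isotropy is nontrivial. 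Using characterisation (2), every nontrivial isotropy group in a suitable model is full. Hence on the level of preimages both $X^G$ and $X^{>1}$ correspond to $(E_\finite\Gamma)^{>1}$, and $X^G = X^{>1} \cong \pi\backslash (E_\finite\Gamma)^{>1}$. To make this homotopy-invariant, I would phrase it via the model of (2) with orbits $\Gamma/F$ and $\Gamma/e$, so that fixed points are computed cellwise.

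Next I use (4): the pushout shows $(E_\finite \Gamma)^{>1} \simeq \coprod_{F\in\maximal} \induct_{N_\Gamma F}^\Gamma (E_\finite N_\Gamma F)^{>1}$. By \cref{appendixlem:group_theory}(1), $E_\finite N_\Gamma F$ may be taken to be $EW_\Gamma F$ inflated along $N_\Gamma F \to W_\Gamma F$. All its isotropy equals $F$, so it is entirely nontrivial-isotropy and is a contractible space with $N_\Gamma F$-action. Therefore $(E_\finite\Gamma)^{>1} \simeq \coprod_F \Gamma\times_{N_\Gamma F} EW_\Gamma F$. Taking the quotient by $\pi$ gives $\pi\backslash(\Gamma \times_{N_\Gamma F} EW_\Gamma F)$.

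Since $F$ is full, $\pi N_\Gamma F \supseteq \pi F = \Gamma$, so $\Gamma = \pi N_\Gamma F$ and $\pi\backslash\Gamma/N_\Gamma F$ is a point. The stabiliser is $\pi\cap N_\Gamma F$, and $\pi\cap N_\Gamma F \to N_\Gamma F/F = W_\Gamma F$ is an isomorphism: it is injective because $\pi\cap F=1$, and surjective because $N_\Gamma F = (\pi\cap N_\Gamma F)F$, which follows from fullness of $F$. So each summand is $(\pi\cap N_\Gamma F)\backslash EW_\Gamma F \simeq BW_\Gamma F$, giving $\coprod_F BW_\Gamma F \simeq X^G \simeq X^{>1}$.

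For the second statement, I pass to homotopy orbits for the residual $G = \Gamma/\pi$ action. For $X^e$ I get $X_{hG} \simeq (E_\finite\Gamma)_{h\Gamma}\simeq B\Gamma$, since the action of $\pi$ is free. For the fixed part I get $(\Gamma\times_{N_\Gamma F}EW_\Gamma F)_{h\Gamma} \simeq (EW_\Gamma F)_{hN_\Gamma F} \simeq BN_\Gamma F$, because $EW_\Gamma F$ is contractible. Under these identifications the inclusion $X^G\to X$ becomes the map induced by $N_\Gamma F\subset\Gamma$, by naturality of homotopy orbits along induction.

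The main obstacle is the first step: making rigorous that genuine fixed points of the quotient $\pi\backslash E_\finite\Gamma$ agree with $\pi\backslash (E_\finite\Gamma)^{>1}$ homotopy-invariantly, and not just pointwise. I would handle this with the cellular model from (2), where fixed points of orbits $\pi\backslash\Gamma/F$ can be computed explicitly.
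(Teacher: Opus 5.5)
Your proposal is correct and follows essentially the same route as the paper: you use the pushout in (4) to identify the singular part with $\coprod_{F}\induct_{N_\Gamma F}^\Gamma E_\finite N_\Gamma F$, model $E_\finite N_\Gamma F$ by the inflation of $EW_\Gamma F$, use that $N_\Gamma F\cap\pi \to W_\Gamma F$ is an isomorphism to get $BW_\Gamma F$, and apply $(-)_{h\Gamma}$ for the second assertion. The one minor difference is how you get $X^G = X^{>1}$: you read it off from a pointwise isotropy analysis in the cellular model of (2), whereas the paper deduces it from the triviality of the residual $G$-action on $(N_\Gamma F\cap\pi)\backslash q^*EW_\Gamma F$, which holds because $F$ is full and acts trivially there.
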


\begin{proof}
	Write $N_\Gamma^{\pi}F = N_\Gamma F \cap \pi$. The composite of the inclusion with the projection
	\begin{equation}
		N_\Gamma^\pi F \xhookrightarrow{i} N_\Gamma F \xrightarrow{q} W_\Gamma F 
	\end{equation}
	is an isomorphism, and $q^* E W_\Gamma F \simeq E_\finite N_\Gamma F$. So, using (4) in \cref{appendixlemma:characterisations_of_property_M} we compute
	\begin{equation}
		X^{>1} \simeq \pi \backslash \coprod_{F \in \maximal} \induct_{N_\Gamma F}^\Gamma E_\finite N_\Gamma F \simeq \pi \backslash \coprod_{F \in \maximal} \induct_{N_\Gamma F}^\Gamma q^* E W_\Gamma F \simeq \coprod_{F \in \maximal} N_\Gamma^\pi F \backslash q^* E W_\Gamma F.
	\end{equation}
	It is clear that the residual $G$-action in the right hand term is trivial, and that the quotient is exactly $BW_\Gamma F$. For the second assertion, we similarly apply $(-)_{h\Gamma}$ to the upper row of the diagram in (4) of \cref{appendixlemma:characterisations_of_property_M}, which is easily seen to identify with the desired map.
\end{proof}

\section{Pairs of spaces with transfers}

\label{sec:pairs_of_spaces}

The purpose of this appendix is to provide the technical background on the category $\spcpairtrans$ sketched in \cref{subsec:Lthy_for_pairs}. For adequate categories $\category{C}$ we give a procedure for the construction of functors
\[ F \colon \spcpairtrans \rightarrow \category{C}. \]
We rely on general categorical technology from the existing literature. First, we give some generalities.  Recall that given a category $\category{X}$ with pullbacks, and two classes of morphisms, one being the class of \textit{left} morphisms $\category{X}_{\leftclass}$, and one the class of \textit{right} morphisms $\category{X}_{\rightclass}$ which are closed under pullbacks and composition, we can form a category of \textit{correspondences} $\Corr(\category{X},\category{X}_\leftclass,\category{X}_\rightclass)$. A morphism $X \dashrightarrow Y$ in this category is a span $X \xleftarrow{p} E \xrightarrow{f} Y$ with $p \in \category{X}_\leftclass$ and $f \in \category{X}_\rightclass$.  For a comprehensive treatment, see \cite[Sec. 2]{HHLN23}. In particular observe that they construct a natural equivalence
\begin{equation}
	\label{eq:opposite_cats_of_spans}
	\Corr(\category{X},\category{X}_\leftclass,\category{X}_\rightclass)\op \simeq \Corr(\category{X},\category{X}_\rightclass,\category{X}_\leftclass).
\end{equation}
Set $\spc^{2} = \func(\Delta^1,\spc)$ to be the category of pairs of spaces. We write generic objects as $(X,A) \in \spc^2$. 

\begin{enumerate}
	\item Write the classes of equivalences, fold maps, finite coverings and all maps as
	\[\equi \subset \fold \subset \fincov  \subset \all.\] 
	The class of fold maps is the class of maps equivalent to $(X,A)^{\cup n} \rightarrow (X,A)$ for some finite integer $n$, and finite coproducts of such maps for varying $n$. All these classes are stable under pullback and composition. We get a chain of wide subcategory inclusions
	\begin{equation*}
		\spc^{2} \simeq \Corr(\spc^{2}, \equi,\all) \rightarrow \Corr(\spc^{2},\fold, \all) \rightarrow \spcpairtrans \coloneqq \Corr(\spc^{2},\fincov,\all).
	\end{equation*}
	\item The category $\spc^2$ is \textit{extensive} in the sense of \cite[Def. 2.3.]{BH21}. That is, it admits finite coproducts, coproducts are disjoint and finite coproduct decompositions are closed under pullbacks. 
	\item We write $\finpairs \subset \spc^2$ and $\finpairstrans \subset \spcpairtrans$ for the full subcategories spanned by pairs of spaces $(T,S)$ for which $S \rightarrow T$ is an injection of finite sets.
	\item The categories $\spcpairtrans$ and $\finpairstrans$ are semiadditive, and the functors $\spc^{2} \rightarrow \spcpairtrans$ as well as $\finpairs \rightarrow \finpairstrans$ preserve finite coproducts, see \cite[Lem. C.3]{BH21}. 
\end{enumerate}

\begin{appendixlemma}
	\label{lem:construction_of_functors_with_transfers}
	Let $\category{C}$ be a cocomplete category. The restriction functor
	\begin{equation}
		\func(\spcpairtrans,\category{C}) \rightarrow \func(\finpairstrans,\category{C})
	\end{equation}
	induces an equivalence between
	\begin{enumerate}
		\item those $F \colon \spcpairtrans \rightarrow \category{C}$ whose restriction to $\spc^2$ commutes with colimits;
		\item those $G \colon \finpairstrans \rightarrow \category{C}$ whose restriction to $\finpairs$ commutes with coproducts.
	\end{enumerate}
\end{appendixlemma}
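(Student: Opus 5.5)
The plan is to factor the restriction through the intermediate category $\Corr(\spc^2,\fold,\all)$, or rather to treat the two pieces of structure separately: the covariant part, where colimit-preserving functors out of $\spc^2$ are left Kan extended from $\finpairs$, and the transfers, which are determined by their values on finite coverings between finite pairs. Concretely, $\spc^2=\func(\Delta^1,\spc)$ is presheaves on $(\Delta^1)\op$. So $\spc^2$ is freely generated under colimits by the two representables, namely the pairs $(\ast,\emptyset)$ and $(\ast,\ast)$. Colimit-preserving functors $\spc^2\to\category{C}$ are therefore equivalent to coproduct-preserving functors $\finpairs\to\category{C}$: $\finpairs$ is the finite-coproduct completion of these two generators, since a pair $(T,S)$ with $S\subset T$ finite is a disjoint union of copies of $(\ast,\emptyset)$ and $(\ast,\ast)$. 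This settles the statement without transfers.

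For transfers, I would use the universal property of spans in the form of Barwick / Haugseng--Hebestreit--Linskens--Nuiten (\cite[Sec. 2]{HHLN23}). A functor out of $\Corr(\spc^2,\fincov,\all)$ is the same as a functor $F\colon\spc^2\to\category{C}$ together with a contravariant functor on finite coverings, agreeing on objects and satisfying the Beck--Chevalley condition for pullback squares along finite coverings. The key point is that when $F|_{\spc^2}$ is colimit preserving, this extra structure is determined by its values on finite pairs. Every finite covering $p\colon(Y,\partial Y)\to(X,A)$ is classified by a functor from the pair (an object of $\spc^2_{/(X,A)}$) to finite pairs. It is therefore a colimit over $(X,A)$, in the sense of descent in the $\infty$-topos $\spc^2$, of coverings between objects of $\finpairs$: the fibres of $p$ over the points of $X$ and $A$. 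Since colimits in $\spc^2$ are universal, $p$ is the colimit of its restrictions to these points, and the transfer $p^\ast\colon F(X,A)\to F(Y,\partial Y)$ is forced to be the colimit of the transfers of those fibrewise restrictions, which are finite coverings in $\finpairstrans$.

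To make this rigorous rather than merely plausible, I would show that the restriction $\func(\spcpairtrans,\category{C})\to\func(\finpairstrans,\category{C})$ has a left adjoint given by left Kan extension. I would then check that on the subcategories (1) and (2) both unit and counit are equivalences. For the counit, the Kan extension formula evaluated at $(X,A)$ is a colimit over $\finpairstrans_{/(X,A)}$ in the span category. I would compare this indexing category with $\finpairs_{/(X,A)}$ by showing that the inclusion is cofinal, using extensivity of $\spc^2$ (finite coproduct decompositions are stable under pullback) and the semiadditivity noted in item (4). Alternatively, and more cleanly, I would invoke the general statement of Barwick or Bachmann--Hoyois (\cite[App. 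C]{BH21}). For an extensive category with a class of ``finite étale'' maps generated by folds, functors out of spans that preserve coproducts correspond to functors on the subcategory of finite objects, and sifted-colimit extension carries over to spans. The step I expect to be the main obstacle is exactly this cofinality, or Beck--Chevalley compatibility: showing that the Kan-extended functor actually receives transfers along arbitrary finite coverings of infinite pairs, functorially, and not only along fold maps. I would first try to deduce it by writing $\spc^2$ as sifted colimits of finite coproducts of representables. Then I would check that the pullback of a finite covering along such a colimit diagram commutes with the colimit (universality of colimits), so that the transfers assemble by naturality.
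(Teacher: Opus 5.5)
Your proposal is correct and follows the paper's route. You left Kan extend along the fully faithful inclusion $\finpairstrans\subset\spcpairtrans$, and the (dualised) adjointability result \cite[Prop. C.21]{BH21} identifies the underlying functor on $\spc^2$ with the left Kan extension along $\finpairs\subset\spc^2$; you then conclude by conservativity of restriction to $\spc^2$ and the fact that colimit-preserving functors on $\spc^2$ are left Kan extended from $\finpairs$. The cofinality you flag as the main obstacle comes not from extensivity or semiadditivity but from $\finpairs$ being closed under sources of finite coverings: any span $(T,S)\leftarrow(E,\partial E)\rightarrow(X,A)$ then factors initially through $(E,\partial E)\in\finpairs$, which is exactly the hypothesis the paper checks before invoking Bachmann--Hoyois.
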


\begin{proof}
	We observe that $\finpairs \subset \spc^{2}$ satisfies the following: if $f \colon (X,A) \rightarrow (T,S)$ is a morphism in $\fincov$, and $(T,S) \in \finpairs$, then also $(X,A) \in \finpairs$.
	By dualising \cite[Prop. C.21]{BH21}, this observation implies that the horizontal restriction maps depicted below fit into a left adjointable diagram.
	\begin{equation}
		\label{eq:spaces_with_transfers_square}
		\begin{tikzcd}
			\func(\spcpairtrans,\category{C}) \ar[r, "\widetilde{R}"'] \ar[d, "p"] & \func(\finpairstrans,\category{C}) \ar[d, "q"] \ar[l, bend right, dashed, "\widetilde{L}"']\\
			\func(\spc^2,\category{C}) \ar[r, "R"'] & \func(\finpairs,\category{C})\ar[l, bend right, dashed, "L"']
		\end{tikzcd}
	\end{equation}
	Since $\widetilde{L}$ is a left Kan extension along a fully faithful functor, it is fully faithful. It restricts to a fully faithful functor $\widetilde{L}^{c}$ on the subcategory of those $G \colon \finpairstrans \rightarrow \category{C}$ such that $qG$ commutes with coproducts. We argue that the essential image of $\widetilde{L}^{c}$ consists of exactly those $F$ for which $pF$ commutes with colimits.
	
	So let $F \colon \spcpairtrans \rightarrow \category{C}$ be so that $pF$ preserves colimits. Then $RpF \simeq q \widetilde{R} F$ commutes with finite coproducts. Since $q$ is restriction along a functor which preserves finite products, and furthermore conservative, we see that also $\widetilde{R} F$ commutes with finite coproducts. So $\widetilde{L} \widetilde{R} F$ lies in the essential image of $\widetilde{L}^c$. It suffices to check that the adjunction counit $\widetilde{L} \widetilde{R} F \rightarrow F$ is an equivalence. Since $p$ is conservative, we may check that $p\widetilde{L} \widetilde{R} F \simeq pF$ is an equivalence instead. Using the equivalences $p \widetilde{L} \widetilde{R} \simeq L q \widetilde{R} \simeq LRp$ we observe that this is equivalent to the adjunction counit inducing an equivalence $LRpF \rightarrow pF$. But if $H \colon \spc^2 \rightarrow \category{C}$ commutes with colimits, then $LRH \rightarrow H$ is an equivalence, which establishes the claim.
\end{proof}

Note that the functor $j\colon \Delta^1 \rightarrow \finpairs$ given by the morphism $(\emptyset \rightarrow \ast) \rightarrow (\ast \rightarrow \ast)$ exhibits the target as a free coproduct completion of the source.

\begin{appendixlemma}
	\label{appendixlem:finite_pairs_with_transfers}
	Let $\category{C}$ be a semiadditive category. Restriction along $ \Delta^1 \xrightarrow{j} \finpairs \rightarrow \finpairstrans$ induces an equivalence
	\[ \func^\cup(\finpairstrans,\category{C}) \simeq \func(\Delta^1,\category{C}). \]
\end{appendixlemma}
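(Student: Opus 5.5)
We need to show that coproduct-preserving functors $\finpairstrans \to \category{C}$ correspond to arrows in $\category{C}$, via restriction along $\Delta^1 \xrightarrow{j} \finpairs \to \finpairstrans$. The plan is to first describe $\finpairstrans$ explicitly as a semiadditive category, and then check that it is the free semiadditive category on $\Delta^1$.

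Every object $(T,S)$ of $\finpairs$ is a finite coproduct of copies of $a \coloneqq (\emptyset \to \ast)$ and $b \coloneqq (\ast \to \ast)$. Indeed, since $S \to T$ is injective, $T$ splits into $T\setminus S$ and $S$, giving $(T,S) \simeq a^{\sqcup (|T|-|S|)} \sqcup b^{\sqcup |S|}$. As \cref{lem:construction_of_functors_with_transfers}'s setup records, $\finpairstrans$ is semiadditive and $\finpairs \to \finpairstrans$ preserves finite coproducts. So any coproduct-preserving functor out of $\finpairstrans$ is determined by its values on $a$, $b$ and on the mapping spaces between them. Note also that $\finpairstrans$ is a $(2,1)$-category whose mapping spaces are groupoids of spans.

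The first key step is to compute the mapping spaces between $a$ and $b$ in $\finpairstrans$. A morphism from $(X,A)$ to $(Y,B)$ is a span $(X,A) \leftarrow (E,E') \to (Y,B)$ whose left leg is a finite covering of pairs, so in particular a cartesian square. We get the following.
\begin{enumerate}
	\item For $a \dashrightarrow a$, $E$ is a finite set with $E' = \emptyset$, so the mapping space is the groupoid of finite sets, whose group completion is $\bbZ$.
	\item For $a \dashrightarrow b$, we would need a map $E \to \ast$ with $E' = \emptyset$ mapping into $B = \ast$, which is fine. The mapping space is again finite sets, and $E$ maps to $b$ through the structure map.
	\item For $b \dashrightarrow a$, cartesianness forces $E' = E$, which must map into $A = \emptyset$. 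Hence $E = \emptyset$ and the mapping space is contractible, consisting of the zero map.
	\item For $b \dashrightarrow b$, the mapping space is finite sets.
\end{enumerate}
In each nonzero case the mapping space is the free $E_\infty$-monoid on the single span with $E = \ast$, namely $\mathrm{Fin}^{\simeq}$. Composition corresponds to the product of finite sets.

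The second step is to identify this with the free semiadditive category on $\Delta^1$. That free object has as mapping spaces $\coprod_{\Delta^1(x,y)} $ copies of $\mathrm{Fin}^{\simeq}$, combined by taking free commutative monoids of sums of generating morphisms. To conclude, I would invoke the universal property of span categories from \cite{HHLN23} or \cite[App. C]{BH21}, which says that $\Corr(\mathrm{Fin}, \fincov, \all)$-type categories are free semiadditive. Concretely, $\finpairstrans \simeq \Corr(\finpairs,\fincov,\all)$, and $\finpairs$ is the free finite-coproduct completion $\mathcal{P}_\Sigma^{\mathrm{fin}}(\Delta^1)$ of $\Delta^1$, as noted before the lemma. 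The spans over a free coproduct completion with finite coverings as backwards maps form the free semiadditive completion; this is the analogue of Barwick's or Glasman's result that $\mathrm{Span}(\mathrm{Fin})$ is the free semiadditive category on a point, applied objectwise. From this, the equivalence $\func^\cup(\finpairstrans,\category{C}) \simeq \func^\cup(\finpairs,\category{C}) \simeq \func(\Delta^1,\category{C})$ follows. Here the first equivalence uses that $\category{C}$ is semiadditive, so that a coproduct-preserving functor out of $\finpairs$ extends uniquely to spans, and the second uses the free coproduct completion.

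I expect the main obstacle to be precisely justifying the first equivalence in this chain, namely that for semiadditive $\category{C}$, coproduct-preserving functors extend uniquely from $\finpairs$ to $\Corr(\finpairs,\fincov,\all)$. I would attempt this by citing \cite[Thm.\ C.?]{BH21}, which gives a universal property of spans for extensive categories whose backwards maps are finite coverings (fold maps suffice for the semiadditivity direction). The hard part is checking that finite coverings in $\finpairs$ are generated by fold maps and equivalences. This holds because every finite covering of a finite pair over $a$ or $b$ is a fold map, by the mapping space computation in the first step.
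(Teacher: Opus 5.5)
Your proposal is correct and follows essentially the paper's route: the paper cites \cite[Prop. C.5]{BH21} for $\func^\cup(\finpairstrans,\category{C}) \simeq \func^\cup(\finpairs,\mathrm{CMon}(\category{C}))$, uses $\mathrm{CMon}(\category{C}) \simeq \category{C}$ by semiadditivity, and concludes because $\finpairs$ is the free coproduct completion of $\Delta^1$. Your mapping-space computation is not needed, but your check that every finite covering in $\finpairs$ is a finite coproduct of fold maps, so that $\finpairstrans \simeq \Corr(\finpairs,\fold,\all)$ as required by the Bachmann--Hoyois statement, spells out an identification the paper's one-line proof uses without comment.
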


\begin{proof}
	Bachmann-Hoyois \cite[Prop. C.5]{BH21} construct an equivalence
	\[ \func^\cup( \finpairstrans, \category{C}) \simeq \func^{\cup}(\finpairs,\mathrm{CMon}(\category{C})). \]
	Since $\category{C}$ is semiadditive, $\mathrm{CMon}(\category{C}) \simeq \category{C}$. Combining this with the observation that $\finpairs$ is the free coproduct completion of $\Delta^1$ yields the claim.
\end{proof}

Let $\category{C}$ be a cocomplete semiadditive category. We write
$\func^\codesc(\spcpairtrans,\category{C}) \subset \func(\spcpairtrans,\category{C})$ for those functors whose restriction to $\spc^{2}$ commutes with colimits.

\begin{appendixproposition}
	\label{prop:constructing_functors_by_describing_map}
	
	Let $\category{C}$ be a cocomplete semiadditive category. 
	Restriction along the functors $\Delta^1 \subset \spc^2 \rightarrow \spcpairtrans$ induces equivalences of categories
	\begin{equation}
		\func^\codesc(\spcpairtrans,\category{C}) \simeq \func^L(\spc^2,\category{C}) \simeq \func(\Delta^1,\category{C}).
	\end{equation}
\end{appendixproposition}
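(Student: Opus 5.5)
We establish the two equivalences in turn; the proposition is their composite.

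\emph{First equivalence.} By definition, $\func^\codesc(\spcpairtrans,\category{C})$ consists of the functors $F$ for which $pF$ preserves colimits. \cref{lem:construction_of_functors_with_transfers} identifies this category, via restriction, with the category of $G\colon \finpairstrans\rightarrow\category{C}$ whose restriction to $\finpairs$ preserves coproducts. The latter condition is the same as $G$ preserving finite coproducts on $\finpairstrans$. Indeed, by item (4) of the generalities, the functor $\finpairs\rightarrow\finpairstrans$ preserves finite coproducts and is essentially surjective, so coproduct diagrams in $\finpairstrans$ are images of coproduct diagrams in $\finpairs$. So this category is $\func^\cup(\finpairstrans,\category{C})$. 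Now apply \cref{appendixlem:finite_pairs_with_transfers}, which uses the semiadditivity of $\category{C}$ to identify $\func^\cup(\finpairstrans,\category{C})$ with $\func(\Delta^1,\category{C})$ by restriction along $j$. This gives $\func^\codesc(\spcpairtrans,\category{C})\simeq\func(\Delta^1,\category{C})$, and the equivalence is restriction along $\Delta^1\rightarrow\finpairs\subset\spc^2\rightarrow\spcpairtrans$.

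\emph{Second equivalence.} We show that $\func^L(\spc^2,\category{C})\simeq\func(\Delta^1,\category{C})$. Here $\spc^2=\func(\Delta^1,\spc)=\PSh((\Delta^1)\op)$ is the free cocompletion of $(\Delta^1)\op$. It is therefore enough to check that the Yoneda image of $(\Delta^1)\op$ is the image of $j$. The representables are $(\ast\rightarrow\ast)$, corepresenting evaluation at the source, and $(\emptyset\rightarrow\ast)$, corepresenting evaluation at the target. The only nontrivial morphism between them is $(\emptyset\rightarrow\ast)\rightarrow(\ast\rightarrow\ast)$, which is exactly $j$. The universal property of presheaf categories then gives $\func^L(\spc^2,\category{C})\simeq\func(\Delta^1,\category{C})$ by restriction along $j$.

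\emph{Compatibility of the two restrictions.} Restriction along $\spc^2\rightarrow\spcpairtrans$ takes $\func^\codesc$ into $\func^L(\spc^2,\category{C})$ by definition. Composing it with the second equivalence gives the first equivalence, since the paths $\Delta^1\rightarrow\spc^2\rightarrow\spcpairtrans$ agree. By two-out-of-three, restriction $\func^\codesc(\spcpairtrans,\category{C})\rightarrow\func^L(\spc^2,\category{C})$ is also an equivalence.

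The main obstacle is the first step: checking that the fixed class in \cref{lem:construction_of_functors_with_transfers} matches the hypothesis of \cref{appendixlem:finite_pairs_with_transfers}. Concretely, one must confirm that every object of $\finpairstrans$ is a finite coproduct of $(\emptyset\rightarrow\ast)$ and $(\ast\rightarrow\ast)$. This holds because an injection $S\rightarrow T$ of finite sets splits as a disjoint union of these two pieces.
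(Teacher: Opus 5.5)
Your proposal is correct and follows the paper's proof. The paper also combines the lemma on restricting functors with transfers to $\finpairstrans$, the semiadditive identification $\func^\cup(\finpairstrans,\category{C})\simeq\func(\Delta^1,\category{C})$, and the fact that $\spc^2$ is the free cocompletion of $\Delta^1$. You additionally spell out the matching of the two coproduct conditions and the two-out-of-three step for the middle term, both of which the paper calls straightforward. (The splitting of finite injections you cite at the end is really what underlies the free-coproduct-completion claim in the second lemma, not the matching step.)
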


\begin{proof}
	The first part is a straightforward combination of \cref{lem:construction_of_functors_with_transfers} and \cref{appendixlem:finite_pairs_with_transfers}, as well as the fact that $\Delta^1 \subset \spc^2$ exhibits the target as the colimit cocompletion of the source.
\end{proof}

\begin{appendixproposition}
	\label{prop:functor_out_of_pairs_with_transfers_satisfying_codescent}
	Let $\category{C}$ be a cocomplete semiadditive category. The inclusion
	\[ \func^{\codesc}(\spcpairtrans,\category{C}) \subset \func^{\cup}(\spcpairtrans,\category{C}) \]
	admits a right adjoint. Its restriction to $\spc^{2}$ is the left Kan extension of its restriction along the composite $\Delta^1 \subset \spc^2 \rightarrow \spcpairtrans$.
\end{appendixproposition}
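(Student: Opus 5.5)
We will build the right adjoint from the equivalence $\func^{\codesc}(\spcpairtrans,\category{C}) \simeq \func(\Delta^1,\category{C})$ of \cref{prop:constructing_functors_by_describing_map}. Write $\iota \colon \Delta^1 \rightarrow \spcpairtrans$ for the composite $\Delta^1 \subset \spc^2 \rightarrow \spcpairtrans$, and $\Phi \colon \func(\Delta^1,\category{C}) \xrightarrow{\simeq} \func^{\codesc}(\spcpairtrans,\category{C})$ for the inverse of restriction along $\iota$. The candidate right adjoint is
\[ R \colon \func^{\cup}(\spcpairtrans,\category{C}) \rightarrow \func^{\codesc}(\spcpairtrans,\category{C}), \qquad F \mapsto \Phi(\iota^* F). \]
We then check that $\map(H, F) \simeq \map(H, \Phi\iota^*F)$ naturally for $H$ codescending and $F$ coproduct-preserving. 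Since $\Phi$ is an equivalence with inverse $\iota^*$, the right-hand side is $\map_{\func(\Delta^1,\category{C})}(\iota^* H, \iota^* F)$. So the claim reduces to showing that restriction $\iota^*$ induces an equivalence $\map(H,F) \rightarrow \map(\iota^*H,\iota^*F)$ whenever $H \in \func^{\codesc}$ and $F \in \func^{\cup}$.

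For this, factor $\iota$ as $\Delta^1 \xrightarrow{j} \finpairs \rightarrow \finpairstrans \subset \spcpairtrans$ and restrict in stages. First, the passage from $\spcpairtrans$ to $\finpairstrans$: by the proof of \cref{lem:construction_of_functors_with_transfers}, $H \simeq \widetilde{L}\widetilde{R}H$ is a left Kan extension from $\finpairstrans$. Hence $\map(H,F) \simeq \map(\widetilde{R}H, \widetilde{R}F)$ for arbitrary $F$, by the universal property of left Kan extension. Second, from $\finpairstrans$ to $\Delta^1$: both $\widetilde{R}H$ and $\widetilde{R}F$ preserve finite coproducts. For $\widetilde{R}F$ this holds because $F$ preserves coproducts and finite pairs are closed under coproducts in $\spcpairtrans$. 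So \cref{appendixlem:finite_pairs_with_transfers} identifies their mapping space with that of their restrictions to $\Delta^1$. Composing the two steps gives the desired equivalence, hence the adjunction. The counit $R F = \Phi\iota^*F \rightarrow F$ is the map adjoint to the identity of $\iota^*F$.

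For the second sentence of the statement, we use \cref{prop:constructing_functors_by_describing_map}: the restriction of $\Phi(\iota^*F)$ to $\spc^2$ is the colimit-preserving functor $\spc^2 \rightarrow \category{C}$ extending $\iota^*F$ along $\Delta^1 \subset \spc^2$. Because $\spc^2 = \func(\Delta^1,\spc)$ is the free cocompletion of $\Delta^1$ (note $\Delta^1 \simeq (\Delta^1)\op$), this colimit-preserving extension is exactly the left Kan extension of $\iota^*F$ along $\Delta^1 \subset \spc^2$.

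The main obstacle is the first restriction step, namely justifying that maps out of a codescending functor are determined on $\finpairstrans$ even when the target $F$ only preserves coproducts. The point is that fully faithfulness of $\widetilde{L}$ gives $\map(\widetilde{L}G, F) \simeq \map(G, \widetilde{R}F)$ for every $F$, with no condition on $F$, so only $H$ needs to lie in the essential image of $\widetilde{L}$. That is exactly what the proof of \cref{lem:construction_of_functors_with_transfers} established.
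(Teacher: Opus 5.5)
Your proposal is correct and follows the paper's argument. The paper's one-line proof identifies the inclusion $\func^{\codesc}(\spcpairtrans,\category{C}) \subset \func^{\cup}(\spcpairtrans,\category{C})$ with the fully faithful left Kan extension $\widetilde{L}$ restricted to $\func^{\cup}(\finpairstrans,\category{C})$. Its right adjoint is therefore $F \mapsto \widetilde{L}\widetilde{R}F \simeq \Phi(\iota^*F)$, which is exactly the functor you obtain from the adjunction $\widetilde{L} \dashv \widetilde{R}$ combined with the equivalence for finite pairs with transfers. You also correctly spell out two points the paper leaves implicit: that $\widetilde{R}F$ preserves finite coproducts whenever $F$ does, and that the restriction to $\spc^2$ is a left Kan extension because $\spc^2$ is the free cocompletion of $\Delta^1$.
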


\begin{proof}
	Our considerations identify the inclusion in question with the left adjoint
	\[ \func^\cup(\finpairstrans,\category{C}) \xrightarrow{\widetilde{L}} \func(\spcpairtrans,\category{C}). \qedhere \]
\end{proof}

As an application, we derive the classical pull-push formula for the transfer in singular homology. Letting $\category{C} = \module{\bbZ}$ and
 considering the functor $(\bbZ \rightarrow \bbZ) \colon \Delta^1 \rightarrow \module{\bbZ}$, \cref{prop:constructing_functors_by_describing_map} provides a functor $\bbZ \otimes (-) \colon \spcpairtrans \rightarrow \module{\bbZ}$. Note that, as plain $\bbZ$-modules we have $\bbZ \otimes (X,A) \simeq \bbZ \otimes \Sigma^\infty(X_+/A_+)$.  We will denote the forward functoriality by $(-)_!$ and the backward functoriality along finite covering maps by $(-)^{*}$.

\begin{appendixproposition}
	\label{appendixprop:covering_and_transfer_for_singular_homology}
	Let $p \colon (X',A') \rightarrow (X,A)$ be an $n$-sheeted covering of pairs. Then the composite
	\begin{equation}
		\bbZ \otimes (X,A) \xrightarrow{p^*} \bbZ \otimes (X',A') \xrightarrow{p_!} \bbZ \otimes (X,A)
	\end{equation}
	is multiplication by $n$.
\end{appendixproposition}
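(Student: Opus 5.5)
We reduce to a universal case using the equivalence $\func^\codesc(\spcpairtrans,\module{\bbZ}) \simeq \func(\Delta^1,\module{\bbZ})$ of \cref{prop:constructing_functors_by_describing_map}, together with naturality of both $p^*$ and $p_!$. An $n$-sheeted covering of pairs $p \colon (X',A') \rightarrow (X,A)$ is classified by a map $(X,A) \rightarrow (B\Sigma_n, B\Sigma_n)$ restricted appropriately. Concretely, it is the pullback of a cartesian square over $X$. Because the covering is cartesian, $A' = A \times_X X'$, so the covering of pairs is the base change of the covering $X' \rightarrow X$ along $(X,A) \rightarrow (X,X)$. The plan is to write $(X,A)$ as a colimit in $\spc^2$ of pairs of the form $(\ast,\emptyset)$ and $(\ast,\ast)$, i.e.\ as a colimit indexed over $X$ of the generating pairs $j(0)$ and $j(1)$. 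Pulling back the covering along each point then gives fold maps $(\ast,\emptyset)^{\cup n} \rightarrow (\ast,\emptyset)$ and $(\ast,\ast)^{\cup n} \rightarrow (\ast,\ast)$.

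The first step is to show that both $p \mapsto p_! \circ p^*$ and multiplication by $n$ are natural endotransformations compatible with these colimits. For this, view the endomorphism $p_!p^*$ as the action of the correspondence $X \leftarrow X' \rightarrow X$ in $\spcpairtrans$. Since pullback in $\spc^2$ commutes with colimits (descent), this correspondence is the colimit over $X$ of its pointwise restrictions. Because $\bbZ \otimes (-)$ has codescent, its restriction to $\spc^2$ preserves colimits, and $\bbZ \otimes (X,A)$ is then the colimit of the values on points. I would make this precise by working in the category of pairs over $(X,A)$ (a slice of $\spc^2$, equivalent to functors $X \to \spc^2$ via straightening): the covering becomes a natural family of fold maps, and $p_!p^*$ becomes the colimit of the composites of the fold transfers.

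The second step is the pointwise computation. For a fold map $\nabla \colon (T,S)^{\cup n} \rightarrow (T,S)$ with $(T,S)$ one of the two generators, semiadditivity (\cref{appendixlem:finite_pairs_with_transfers}, which uses the Bachmann--Hoyois identification) shows the following. The backward map $\nabla^*$ is the diagonal $F(T,S) \rightarrow F(T,S)^{\oplus n}$, and $\nabla_!$ is the sum map. Their composite is therefore $n$ times the identity. On $(\ast,\ast)$ the value is $0$ anyway.

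The main obstacle is the first step: justifying rigorously that the transfer along a general covering is the colimit of the transfers along the fold maps over points. Equivalently, it is showing that $p^*$ is compatible with the left Kan extension description in \cref{prop:functor_out_of_pairs_with_transfers_satisfying_codescent}, which rests on the left adjointability in \cref{lem:construction_of_functors_with_transfers} (\cite[Prop. C.21]{BH21}). I would try first to invoke that adjointability directly: the Beck--Chevalley condition there is precisely the statement that the backward functoriality of $\widetilde{L}G$ along a finite covering is computed fibrewise over $\finpairs$. If that fails, a fallback is to reduce to connected $X$ and to the universal case $X = B\Sigma_n$, $X' = B\Sigma_{n-1}$. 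In that case $\bbZ \otimes (-)$ is ordinary homology with its classical transfer, and the composite is known to be multiplication by $n$.
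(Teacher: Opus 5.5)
Your overall strategy is the paper's: pass to the generators $(\emptyset\to\ast)$ and $(\ast\to\ast)$, over which the cover is a fold map, and compute there by semiadditivity. But your main line never uses anything specific to $\bbZ$, and that hides a genuine gap. The step from ``each component $\nabla_!\nabla^*$ is $n\cdot\mathrm{id}$'' to ``the induced endomorphism of the colimit $\bbZ\otimes(X,A)$ is $n$'' needs the identifications $\nabla_!\nabla^*\simeq n$ to be coherent as the point moves through $X$. The fibres are only non-canonically fold maps, the monodromy permutes the sheets, and in an $\infty$-category a natural transformation is not determined by its components. For general coefficients this genuinely fails. Your argument applies verbatim to $\mathbb{S}\otimes(-)=\Sigma^\infty_+(-)$ with its transfers. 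Yet for the double cover $p\colon E\Sigma_2\to B\Sigma_2$ the composite $p_!p^*$ is not multiplication by $2$. By Kahn--Priddy, the transfer $\Sigma^\infty_+B\Sigma_2\to\Sigma^\infty_+E\Sigma_2\simeq\mathbb{S}$ sends the generator of $\pi_1\Sigma^\infty B\Sigma_2\cong\bbZ/2$ to $\eta$, and $p_!$ is split injective, whereas $2$ kills $\pi_1\Sigma^\infty_+B\Sigma_2$.

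The missing input is the one sentence in the paper's proof that carries the weight. The paper works in $\spc^2_{/B\Sigma_n}$, which is freely generated under colimits by the objects $(\emptyset\to\ast\xrightarrow{\alpha}B\Sigma_n)$ and $(\ast\to\ast\xrightarrow{\alpha}B\Sigma_n)$. It observes that the restrictions of $p\mapsto(\bbZ\otimes(X,A),\,p_!p^*)$ and of multiplication by $n$ to these generators take the values $\bbZ$ and $0$. They therefore land in a $1$-category of abelian groups (equivalently, $\map_{\module{\bbZ}}(\bbZ,\bbZ)\simeq\bbZ$ is discrete), where a natural transformation is determined by its components, so the pointwise computation suffices.

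The colimit-compatibility you single out as the main obstacle is, by contrast, formal. Once $p\mapsto p_!p^*$ is natural in pullback squares of coverings (which is just composition of spans), it follows because colimits in $\module{\bbZ}^{B\mathbb{N}}$ are computed on underlying modules. Your fallback would not work as stated. Naturality along the classifying map $f\colon X\to B\Sigma_n$ only gives $f_!\,p_!p^*=n\,f_!$, which does not determine $p_!p^*$. Identifying the span-theoretic transfer with the classical one would also need its own proof.
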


\begin{proof}
	Write $\spc^{2}_{/B\Sigma_n} = \func(\Delta^1, \spc_{/B\Sigma_n})$, i.e. the category of pairs of spaces $(X,A)$ together with an $n$-sheeted covering of pairs $(X',A') \rightarrow (X,A)$. We have the functor $T \in \func(\spc^2_{B\Sigma_n},\module{\bbZ}^{B\mathbb{N}})$ sending $((X',A') \xrightarrow{p} (X,A))$ to $\bbZ \otimes (X,A)$ equipped with the endomorphism $p_! p^*$. 
	The functor $T'$ sending $(X,A) \in \spc^2_{/B\Sigma_n}$ to $n \otimes (X,A) \colon \bbZ \otimes (X,A) \rightarrow \bbZ \otimes (X,A)$ preserves colimits as well, and we want to argue that $T\simeq T'$.
	
	Note that $\spc^2_{/B\Sigma_n}$ is freely generated under colimits by the subcategory $\Delta^1_{B\Sigma_n}$ on objects of the shape $d_\alpha = (\emptyset \rightarrow \ast \xrightarrow{\alpha} B\Sigma_n)$ and $e_\alpha = (\ast \rightarrow \ast \xrightarrow{\alpha} B\Sigma_n)$. The restrictions of $T$ and $T'$ to $\Delta^1_{B\Sigma}$ land in the $1$-category $\func(\Delta^1_{B\Sigma},\mathrm{Ab}) \simeq \func(\Delta^1, \mathrm{Ab}^{B\Sigma_n})$, where $\mathrm{Ab} = \module{\bbZ}^{\heartsuit}$ is the category of abelian groups.
	Indeed, as $\bbZ$-modules with $B\mathbb{N}$-action, $T(d_\alpha) = (\bbZ \xrightarrow{n} \bbZ)$ and $T(e_\alpha) = 0$, and $T'$ takes the same values.  Hence, the restrictions of $T$ and $T'$ are equivalent, and since both preserve colimits, we have $T\simeq T'$.
\end{proof}

\printbibliography
	
\end{document}